\documentclass[a4j,12pt]{article}
\usepackage{amsmath}
\usepackage{amsthm}
\usepackage{amssymb}
\usepackage{amscd}
\usepackage[dvips]{graphicx}
\setlength{\textheight}{23cm}
\setlength{\textwidth}{16cm}
\setlength{\oddsidemargin}{0cm}
\setlength{\evensidemargin}{0cm}
\setlength{\topmargin}{0cm}
\newtheorem{definition}{Definition}[section]
\newtheorem{theorem}[definition]{Theorem}
\newtheorem{proposition}[definition]{Proposition}
\newtheorem{lemma}[definition]{Lemma}
\newtheorem{cor}[definition]{Corollary}
\newtheorem{example}[definition]{Example}
\newtheorem{remark}[definition]{Remark}
\newtheorem{conjecture}[definition]{Conjecture}

\newtheorem{question}[definition]{Question}

\newtheorem{sublemma}[definition]{Sublemma}
\newcommand{\s}{^{\sharp}}
\newcommand{\n}{^{\natural}}

\newcommand{\gH}{\mathfrak{H}}
\newcommand{\gI}{\mathfrak{I}}

\newcommand{\infgal}{\mathrm{Inf}\text{-}\mathrm{gal}\, }
\newcommand{\iqd}{iterative $q$-difference }
\newcommand{\qsi}{$q$-SI $\sigma$-differential }

\newcommand{\com}{\mathbb {C}} 

\newcommand{\gd}{\delta}
\newcommand{\D}{\mathcal{D}}

\newcommand{\N}{\mathbb{N}}
\newcommand{\G}{\mathbb{G}}

\newcommand{\calH}{\mathcal{H} \,}
\newcommand{\Hom}{\mbox{ $\mathrm{Hom}$ }}
\newcommand{\Id}{\mathrm{Id}}

\newcommand{\M}{\mathbf{M}} 
\newcommand{\Q}{\mathbb{Q}}

\newcommand{\Z}{\mathbb{Z}}
\newcommand{\K}{\mathcal{K}}
\newcommand{\eL}{\mathcal{L}}

\newcommand{\mapue}[1]{%
     \smash{\mathop{%
      \hbox to 1cm{\rightarrowfill}}\limits^{#1}}}
%%%%%%%%%%%%%%%%%%%%%%%%%%%
\newcommand{\NCF}{\mathcal{NCF}\sb{L/k}}
\newcommand{\NCA}{\text{$(NCAlg/L^{\natural})$}}
\begin{document}
\title{Quantization of Galois theory, \\
Examples and observations}
\author{Katsunori Saito and 
Hiroshi Umemura \\  
Graduate School of Mathematics \\
Nagoya University\\ \ 
\\
Email\quad {\small 
m07026e@math.nagoya-u.ac.jp and 
 umemura@math.nagoya-u.ac.jp} }
%\date{}
\maketitle
\begin{abstract}
Heiderich \cite{hei10} discovered that we can apply the Hopf Galois theory also to non-linear equations. We showed in \cite{ume11}
that so far as we consider linear difference -differential  equations, Galois group is a linear algebraic group. We give three examples of
non-linear difference-differential equations 
 in which quantum groups naturally arise
 as Galois groups.  
\end{abstract}
\section{Introduction} 
%%%%%%%%%%%%%%%%%%%%%%%%%%%%%%%%%%%%%%%%%%%%%%%%%%%%%%%%%%%%%%%%
The pursuit of $q$-analogue of hypergeometric functions goes back to 
the 19th century. Galois group of a $q$-hypergeometric function is not a quantum 
group but it is a linear algebraic group. 
This shows that  we consider a $q$-deformations of the hypergeometric equation, Galois theory is not quantised.  
In fact, 
generally we know that  Galois group of a linear 
difference equation is a linear algebraic group.  
Y.~Andr\'e \cite{and01} wis the first who studied linear difference-differential equations in the 
framework of non-commutative geometry. He encountered only linear algebraic groups treating linear difference-differential equations. See also Hardouin \cite{har10}. 
We clarified the situation in \cite{ume11}. 
So far as we study 
linear difference-differential equations, how twisted or non-commutative the ring of difference and differential operators are, Galois group according to general Hopf Galois theory is a linear algebraic group.   
\par 
  So it is natural to wonder  how about considering non-linear difference-differential equations. We proposed to study the $q$-Painlev\'e equations in \cite{ume11}. 
  We answer this question in the following way. 
  We see quantization of Galois graroup for much simpler equations than the 
  $q$-Painlev\'e equations (Sections \ref{10.4a}, \ref{10.4b} and \ref{10.4c}). 
  \par
Except for  Lie algebras, all the rings or algebras are associative $\Q$-algebras and 
contian the unit element.  Morphisms between 
them are are unitary. For a commutative algebra
 $A$, we denote by $(Alg/ A)$ the category of $A$-algebras, which we sometimes denote by $(CAlg/ A)$ 
to emphasize that we are dealing with commutative $A$-algebras. In fact, to study quantum groups, we have to also consider non-commutative $A$-algebras.  We denote by $(NCAlg/A)$ the category of not necessarily commutative $A$-algebras $B$ such that 
$A$ ( or to be more logic, the image of $A$ in
 $B$) is contained in the center of $B$. 
 %\end{document}
%%%%%%%%%%%%%%%%%%%%%%%%%%%%%%%%%%%%%%%%%%%%%%%%%%%%%%%%%%%%%%%%%%%%%%%% 
\section{Foundation of a general Galois theory \cite{ume96.2}, \cite{ume06}, \cite{ume07}}
\subsection{Notation}
\par  
Let us recall basic  notation. 
Let $(R, \gd )$ be a difference ring so that $\gd:R \to R$ is a derivation of a
commutative ring $R$ of characteristic $0$. When there is no danger of confusion of the derivation 
 $\gd$, we simply say the differential ring $R$ without referring to the derivation 
  $\gd$. We often have to talk, however, about the abstract ring $R$ that we denote by $R\n$. 
  For a commutative ring $S$ of characteristic $0$, the power series ring 
  $S[[X]]$ with derivation $d/dX$ gives us an example of differential ring.  
%%%%%%%%%%%%%%%%%%%%%%%%%  
\subsection{General Galois theory of differential field extenwions}
  Let us start by recalling our general Galois theory of differential field extensions.
 \subsubsection{Universal Taylor  morphism}
 \par 
 Let $(R, \,  \gd )$ be a differential ring and $S$ a commutative ring. 
 A Taylor morphism is a differential morphism 
 \begin{equation}\label{a2.1}   
 (  R, \,\gd ) \to (S[[X]], \, d/dX   ).
 \end{equation}
 Given a differential ring $(R, \, \gd )$, 
 among the Taylor morphisms \eqref{a2.1}, there exists the universal one. 
 In fact,   for an element $a\in R$, we define the power series 
  $$
  \iota (a) = \sum \sb{n=0} ^{\infty} \frac{1}{n!}\gd^{n}(a)X^n \in  R\n [[X]].
  $$ 
Then the map
\begin{equation}\label{9.19d}
\iota \colon (R, \, \gd ) \to (R\n [[X]],\,  d/dX )
\end{equation}
is the universal Taylor morphism.
%%%%%%%%%%%%%%%% 
\subsubsection{Galois hull $\eL /\K$ for a differential field extension $L/k$}\label{9.19h}
Let $(L, \, \gd)/(k, \, \gd  )$ be a differential field extension such that 
the abstract field $L\n $ is finitely
 generated over the abstract base field $k\n$. We constructed the Galois hull 
 $\eL / \K$ in the following manner.  

 We take a mutually commutative basis 
 $$
 \{ D \sb 1, \,  D \sb 2 , \cdots 
 , D\sb d \}
 $$
  of the $L\n$-vector space $\mathrm{Der} \, (L\n / k\n )$ 
 of $k\n$-derivations of the abstract field $L\n$. So we have 
 $$
[ D\sb i , D\sb j ] =D\sb i D\sb j - D\sb j D\sb i=  0 \qquad \text{for }  1 \le i, \, j \le d. 
 $$
\par 
%%%%%%%%%%%%%%%%%%%%%%%%%%%%%%%%%%%%%%%%%%%%%%%%%%%%%%%%%%%%%%%%%%%%%%%%
Now we introduce a partial 
differential structure on the abstract field $L\n$ 
using the derivations $\{D\sb 1 , \, D\sb 2, \,
 \cdots , D\sb d   \}$. 
  Namely we set 
$$
L\s := ( L\n , \,\{ D\sb 1 , \, D\sb 2, \,  \cdots , D\sb d    \} )
$$  
that is a partial differential  field. 
Similarly we define a differential structure on the  power series ring 
$L\n [[X]]$ with coefficients  in $L\n$ by  considering     
the derivations $$\{D\sb 1 , \, D\sb 2, \,
 \cdots , D\sb d   \}$$ 
 that operate on the coefficients of the power series. 
  In other words, 
 we work with the differential ring 
 $L\s [[X]] $.  
 So the power series ring 
 $L\s[[X]]$ has  differential structure 
 defined by the differentiation  $d/dX$ with respect to the variable $X$ 
    and the set 
 $$\{ D\sb 1 , \, D\sb 2, \,  \cdots , D\sb d    \}$$ 
 of derivations. Since there is no danger of confusion of the choice of the differential operator $d/dX$, 
 we denote this differential ring by 
 $$
  L\s [[X]].    . 
 $$
 %%%%%%%%%%%%%%%%%%%%%%%%%%%%%%%%%%%%%%%%%%%%%%%%
We have the universal Taylor morphism 
\begin{equation}\label{m27.0.5}%m27.0.5
\iota \colon L \to L\n [[X]]
\end{equation}
that is a differential morphism. We added further the
$\{ D\sb 1 , \, D\sb 2, \,  \cdots , D\sb d    \}$-differential structure 
on $L\n [[X]] $ or we replace the target space 
$ L\n [[X]] $ of the universal Taylor morphism \eqref{m27.0.5}
 by $L\s [[X]] $ so that we have 
 $$
 \iota \colon L \to  L\s [[X]] . 
 $$
\par 
In Definition \ref{a4.6} below, we work in the differential ring 
$L\s[[X]] $ with differential operators 
$d/dX $ and   
$
\{ D\sb 1 , \, D\sb 2, \,  \cdots , D\sb d    \}.
$
We identify  the differential field  $L\s$ 
with 
the set of power series
consisting only of constant terms.    
Namely, 
$$
L \s = \{ \sum \sb {n=0}^{\infty} a\sb n X^n \in  L\s [[X]]  \, | \, \,\text{\it \/The 
coefficients }
a\sb n =0 \text{ for every } n\ge 1 \}.
$$
Therefore $L\s$ is a differential sub-field of the differential 
ring $L\s [[X]] $. The differential operator $d/dX$ 
kills $L\s$.  
  Similarly, 
we set  
$$
k \s := \{
 \sum \sb {n=0}^{\infty} a\sb n X^n \in  L\s [[X]]  \, | \, \,\text{\it \/The 
coefficients $a\sb 0 \in k$ and }
a\sb n =0 \,\text{\it \/ for every } n\ge 1 \}.
$$
So all the differential operators 
$d/dX, \, D\sb 1, \, D\sb 2, \cdots , D\sb d$ 
 act trivially on 
$k\s$ and so $k\s$ is a differential sub-field of $L\s$ and hence 
of the differential algebra $L\s [[X]] $. 
\begin{definition}\label{a4.6}
The Galois hull $\eL / \K $ is the differential sub-algebra 
of $L\s[[X]] $, where 
$\eL$ is the differential sub-algebra generated by the 
image $\iota (L)$ and $L\s$ 
and $\K $ is the sub-algebra generated by the image $\iota (k ) $ 
and $ L\s$. So $\eL / \K$ is a differential algebra  extension with differential operators $d/dX $ and  
$
\{ D\sb 1 , \, D\sb 2, \,  \cdots , D\sb d    \}.
$
\end{definition} 
\subsubsection{Universal Taylor morphism for a partial differential ring}
%We explain how we define our Galois group $\infgal (L/k)$. 
\par   
The universal Taylor morphism has a generalization for 
partial differential ring. 
Let $$(R, \,\{ \partial \sb 1 ,\, \partial \sb 2 ,\, \cdots , \partial \sb d \}) $$ be a partial differential ring. So $R$ is a commutative ring of characteristic $0$ and 
$\partial \sb i \colon  R \to R$ are mutually commutative derivations. 
For a  ring $S$, the power series ring 
$$
( S[[ X\sb 1,\,  X\sb 2,\,  \cdots , X\sb d ]], \,\{ 
\frac{\partial }{\partial X\sb 1 }, \,
\frac{\partial }{\partial X\sb 2 }, \, \cdots ,  
  \frac{\partial }{\partial X\sb d } \})  
$$
gives us an example of 
partial differential ring. 
\par         
A Taylor morphism is a differential morphism 
\begin{equation}\label{taylor}%taylor
(R,\, \{ \partial \sb 1 ,\, \partial \sb 2 ,\, \cdots ,\, \partial \sb d \})    
\to 
( S[[ X\sb 1,\, X\sb 2,\, \cdots ,\, X\sb d ]],\, \{ 
\frac{\partial }{\partial X\sb 1 }, \,
\frac{\partial }{\partial X\sb 2 }, \, \cdots ,  
  \frac{\partial }{\partial X\sb d } \}).   
\end{equation}
For a differential algebra  
$
(R,\, \{ \partial \sb 1 ,\, \partial \sb 2 ,\, \cdots ,\, \partial \sb d \}) , 
$
among Taylor morphisms \eqref{taylor}, 
 there exists the universal one $\iota \sb R$ given below. 
 \begin{definition}
 The universal Taylor morphism is a differential morphism 
 \begin{equation}\label{m28.1}
  \iota \sb R \colon (R,\,\, \{ \partial \sb 1 ,\, \partial \sb 2 ,\, \cdots ,\, \partial \sb d \}) 
 \to 
 ( R\n[[ X\sb 1,\, X\sb 2,\, \cdots ,\, X\sb d ]],\, \{ 
\frac{\partial }{\partial X\sb 1 }, \,
\frac{\partial }{\partial X\sb 2 }, \, \cdots ,  
  \frac{\partial }{\partial X\sb d } \})
\end{equation}
%where the morphism $\iota \sb R$ is 
defined by the formal power series expansion 
%%%%%%%%%%%%%%%%%%%%%%%%%%%%%%%%%%%%%%%%%%%%%%%%%%%%%%%%%%%
\begin{equation*}
\iota\sb {R} (a) = \sum\sb {n \in \N ^d} \frac{1}{n!}
\partial ^n a \, X^n 
\end{equation*}
for an element $a \in R$, 
where we use  the standard  notation for multi-index. 
\par Namely,  
for $n= (n\sb 1,\,  n\sb 2, \cdots , n\sb d) \in \N ^d$, 
$$
 |n| = \sum \sb {i=1}^d n\sb i, 
 $$
 $$
\partial ^n = \partial \sb 1 ^ {n\sb 1}
\partial \sb 2 ^ {n\sb 2} \cdots 
\partial \sb d ^ {n\sb d}
$$
$$
n! = n\sb 1! n\sb 2!  \cdots n\sb d! 
$$
and 
$$
X^n = X\sb 1 ^{n\sb 1}X\sb 2 ^{n\sb 2}\cdots X\sb d ^{n\sb d}.
$$ 
 \end{definition}
 See Proposition (1.4) in Umemura \cite{ume96.2}.
 %%%%%%%%%%%%%%%%%%%%%%%%%%%%%%%%%%%%%%%%%%%%%%%%%%%%%%%
 \subsubsection{The functor $\mathcal{F} \sb{ L/k}$ of infinitesimal deformations for a differential field extension}\label{9.25e}
%Combining the universal Euler morphism  
%and 
%%%%%%%%%%%%%%%%%%%%%%%%%%%%%%%%%%%%%%%%%%%%%%%%%%%
For the partial differential 
field $L\s$, 
we have the universal Taylor morphism
\begin{equation}\label{m27.1}%m27.1
\iota \sb {L\s} \colon L\s \to L\n [[ W\sb 1, W\sb 2, \cdots , W\sb d ]]
=L\n [[W]], 
\end{equation}
 where we replaced the variables $X$'s 
 in \eqref{m28.1}
 by the   variables $W$'s for a  notational reason.  
 The universal Taylor morphism \eqref{m27.1} gives a 
 differential morphism 
 \begin{equation}\label{m27.2}
 L\s[[X]] \to L\n [[ W\sb 1,\, W\sb 2, \cdots , W\sb d ]][[X]]. 
 \end{equation}
Restricting the morphism \eqref{m27.2} to the differential 
sub-algebra $\eL $ of $L\s[[X]] $, we get a differential 
morphism 
$\eL \to  L\n [[ W\sb 1,\, W\sb 2,\, \cdots , \, W\sb d ]][[X]] $
that we denote by $\iota $. So we have the 
differential 
morphism 
\begin{equation}\label{m27.4}
\iota \colon \eL \to 
L\n [[ W\sb 1,\,  W\sb 2,\,  \cdots ,\,  W\sb d ]][[X]].
\end{equation}
Similarly for every commutative $L\n$-algebra $A$, 
thanks to the differential morphism 
$$
L\n[[W]] \to A[[W]]
$$
we have the 
canonical differential morphism 
\begin{equation}\label{m27.3}
\iota \colon \eL \to A [[ W\sb 1,\,  W\sb 2,\,  \cdots ,\,  W\sb d ]] [[X]].
\end{equation}
We define  the functor 
$$
\mathcal{F}\sb {L/k} \colon (Alg/ L\n) \to (Set)
$$
from the category $(Alg/L\n)$  of commutative  $L\n$-algebras to the category 
$(Set)$ of sets, by associating to an $L\n$-algebra $A$,  the set of  
 infinitesimal deformations of the canonical morphism \eqref{m27.4}.  
So 
\begin{multline*}
\mathcal{F}\sb{L/k}(A)
= \{ f\colon \eL \to 
  A [[ W\sb 1,\,  W\sb 2,\,  \cdots ,\,  W\sb d ]][[X]] )\, | \, 
 f \,\text{\it \/ is a differential }
 \\
 \,\text{\it \/morphism 
 congruent to the canonical  morphism } \iota 
 \text{\it \/ modulo nilpotent elements} \\ 
 \text{\it \/such that } 
 f = \iota \,\text{\it \/ when restricted on the sub-algebra } \K 
\}.
 \end{multline*}
% See Definition 2.13 in \cite{mo1},  
% for a rigorous definition. 
 \subsubsection{Group functor $\infgal(L/k)$ of infinitesimal automorphisms for a differential field extension} 
The Galois group in our Galois  theory is the group functor 
$$
\infgal (L/k) \colon (Alg/L\n ) \to (Grp)
$$
 defined 
 by 
 \begin{multline*}
 \infgal (L/k) (A) =
  \{
  \, 
 f \colon \eL \hat{\otimes} \sb {L\s} A[[W]] \to 
 \eL \hat{\otimes} \sb {L\s} A[[W]]
 \, | \, 
 f 
 \,\text{\it \/
  is a differential} \\ 
  \K \otimes \sb {L\s}A[[W]]   \text{\it-automorphism 
 continuous with respect to
  the $W$-adic topology} \\
  \,\text{\it \/ and congruent to the identity modulo nilpotent elements 
  }
    \}
 \end{multline*}
 for a commutative $L\n$-algebra $A$.
 Here the completion is taken with respect too the $W$-adic topology.
  See Definition 2.19 in \cite{mori09}. 
\par
Then the group functor $\infgal (L/k)$ operates on the functor 
$\mathcal{F}\sb {L/k}$ in such a way that 
the operation $(\infgal (L/k), \, \mathcal{F}\sb {L/k} )$ is a 
principal homogeneous space (Theorem 2.20, \cite{mori09}). 
\subsubsection{Origin of the group structure}\label{9.28a}
We explained the origin of the group functor $\infgal$. We illustrate it by an example.
\begin{example}. Let us consider a  differential  field extension 
$$
L/k:=\com (y), \, \gd ) /\com 
$$
such thqt $y$ is transcendental over the field $\com$ and 
\begin{equation}\label{9.27f}
\gd (y) = y \qquad \text{\it and }\qquad  \gd (\com) = 0 
\end{equation}
so that $k=\com $ is the field of constants 
of $L$.
\end{example}
The universal Taylor morphism 
$$
\iota \colon L \to L\s[[X]]
$$ 
maps $y\in L$ to 
$$
Y :=y\exp X \in L\s[[X]].
$$  
Since the field  extension $L\n /k\n =  \com (y)/\com$, taking $d/dy \in \mathrm{Der}(L\n/k\n
 $ as a basis of $1$-dimensional $L\n$-vector space 
$\mathrm{Der}(L\n/k\n)$,   we get 
$L\s := (L\n , \, d/dy)$. 
As we have a relation 
\begin{equation}\label{9.27a}
y\frac{\partial Y}{\partial y} =Y 
\end{equation}
that is an equality in the power series ring 
$L\s[[X]]$ 
so that the Galois hull $\eL /\K$ is 
\begin{equation}\label{9.27b}
\eL = \K[ \,\exp X\, ] , \quad \K =L\s \subset L\s[[X]] 
\end{equation}
by definition  of the 
Galois hull. 
\par
Now let us see the infinitesimal deformation functor $\mathcal{F}\sb{L/k}$. To this end,  
we Taylor expand the coefficients of the 
power series in $L\s [[X]]$
to get 
$$
\iota :L \to L\s[[X]] \to L\n[[W]][[X]]= L\n[[W, \, X]]
$$
so that  $\iota (y) = (y+W)\exp X \in L\n 
[[W, \, X]]$.

It follows from \eqref{9.27a} and \eqref{9.27b}, 
for a commutative 
$L\n $-algebra  $A$ an infinitesimal deformation 
$\varphi \in \mathcal{F} \sb{L/k} $ is determined by the image 
$$   
\varphi (Y) = c Y \in A[[W, \, X]], 
$$
where $c\in A$. Moreover any invertible element  $c\in A$ 
infinitesimally close to $1$  
defines an infinitesimal deformation so that we conclude
\begin{equation}\label{11.14a} 
\mathcal{F}\sb {L/k}(A) = \{ c\in A \, | \, c-1 \,\text{\it \/ is nilpotent} \}. 
\end{equation}
\par
{ \it Where does the group structure come from?} 
\par 
To see this, we have to look at the dynamical system defined by the differential equation \eqref{9.27f}. Geometrically the differential equation \eqref{9.27f} gives us a dynamicla system
on the line $\com $. 
$$
 y \mapsto Y=y\exp X 
  $$
  describes the dynamical system. 
  Observe the dynamical system trough 
  algebraic differential equations. is equivalent to considering the 
  deformations of the Galois hull. 
  So the (infinitesimal) deformation functor 
  measures the ambiguity of the observation. 
  In other words, the result due to our method 
  is \eqref{11.14a}. In terms of the initial condition, it looks as 
  $$
  y \mapsto cY\, |\sb {X=0} = cy\exp X\, |\sb{X=0}= cy.
  $$   
Namely, 
\begin{equation}\label{9.27d}
y \mapsto cy.
\end{equation}   
If we have  two transformations \eqref{9.27d}   
  $$
  y \mapsto cy, \qquad y \mapsto c^\prime y 
  $$
  the composite trasformation corresponds to the product  
  $$
   y\mapsto cc^\prime y.
   $$
\subsection{Difference
Galois theory}\label{0820a}
\par  
If we replace the universal Taylor morphism by the universal Euler morphism, we can 
construct  
a general Galois theory of difference equations (\cite{mori09}, 
\cite{morume09} ).
\par
\subsubsection{Universal Euler morphism}\label{0820b}
 Let $(R,\,  \sigma )$ be a difference ring so that $\sigma :R \to R$ is an endomorphism of a commutative 
 ring $R$. When there is no danger of confusion of the endomorphism 
 $\sigma$, we simply say the difference ring $R$ without referring to the endomorphism $\sigma$. We often have to talk however about the abstract ring $R$ that we denote by $R\n$.  
 For a commutative  ring $S$, we denote by $F(\N , \, S)$ the ring of functions 
 on 
 the set 
  $$
  \N = \{0, \, 1, \, 2, \cdots  \}
  $$ 
 taking values in the ring $R$. 
 For a function $f \in F(\N , \, S)$, we define the shifted function 
 $\Sigma f \in F(\N , \, S)$  by 
 $$
( \Sigma f)(n) = f( n + 1) \qquad \,\text{\it \/ for every } n \in \N.
 $$
 Hence  
 the shift operator 
 $$
 \Sigma :F(\N , \, S) \to F(\N , \, S)
 $$
 is an endomorphism of the ring  $F(\N , \, S)$ 
 so that $(F(\N , \, S), \, \Sigma )$ is a difference ring.  
\begin{remark}
In this paragraph \ref{0820a}
 and the next \ref{0820b},  
 in particular for the existence of the universal Euler morphism, 
 we do not need the 
 commutativity assumption of the 
underlying ring. 
\end{remark}
\par 
 Let $(R,\, \sigma)$ be a difference ring and $S$ a ring. 
 An Euler morphism is a difference morphism 
 \begin{equation}\label{1a2.1}   
 (  R, \, \sigma ) \to ( F ( \N , \, S), \, \Sigma ). 
 \end{equation}
 Given a difference ring $(R,\,  \sigma )$, 
 among the Euler morphisms \eqref{1a2.1}, there exists the universal one. 
 In fact,   for an element $a\in R$, we define the function $u[a] \in F (\N , \, R\n)$ 
 by 
 $$
u[ a](n) = \sigma ^n (a)  \qquad \,\text{\it \/ for } n \in \N .     
 $$ 
 Then the map
 \begin{equation}\label{9.19e}
\iota \colon  (R, \, \sigma ) \to 
( F(\N , \, R\n ), \, \Sigma ) \qquad a \mapsto u[a] 
\end{equation}
is the universal Euler morphism (Proposition 2.5, \cite{mori09}).  
\subsubsection{Galois hull $\eL /\K$ for a difference field extension $L/k$}\label{9.19g}
Let $(L, \,\sigma )/(k,\, \sigma  )$ be a difference field extension such that 
the abstract field $L\n $ is finitely
 generated over the abstract base field $k\n$. We constructed the Galois hull 
 $\eL / \K$ 
 as 
 in the differential case. Namely,   
we take a mutually commutative basis 
 $$
 \{ D \sb 1, \,  D \sb 2 , \cdots 
 , D\sb d \}
 $$
  of the $L\n$-vector space $\mathrm{Der} \, (L\n / k\n )$ 
 of $k\n$-derivations of the abstract field $L\n$. 
 %%%%%%%%%%%%%
%%%%%%%%%%%%%%%%%%%%%%%%%%%%%%%%%%%%%%%%%%%%%%%%%%%%%%%%%%%%%%%%%%%%%%%%
 We introduce the partial differential field 
$$
L\s := ( L\n , \, \{ D\sb 1 , \, D\sb 2, \,  \cdots , D\sb d    \} ). 
$$   
Similarly we define a differential structure on the  ring 
$F(\N , \, L\n)$ of functions taking values in $L\n$ by  considering     
the derivations $$\{D\sb 1 , \, D\sb 2, \,
 \cdots , D\sb d   \} .$$ In other words, 
 we work with the differential ring 
 $F(\N ,\, L\s )$.  
 So the ring 
 $F( \N , \, L\n )$ has a difference-differential structure 
 defined by the shift operator $\Sigma$ and the set 
 $$\{ D\sb 1 , \, D\sb 2, \,  \cdots , D\sb d    \}$$ 
 of derivations. Since there is no danger of confusion of the choice of the difference operator $\Sigma$, 
 we denote this difference-differential ring by 
 $$
 F(\N , \, L\s)    . 
 $$
 %%%%%%%%%%%%%%%%%%%%%%%%%%%%%%%%%%%%%%%%%%%%%%%%
We have the universal Euler morphism 
\begin{equation}\label{1m27.0.5}%m27.0.5
\iota \colon L \to F (\N , \, L \n)
\end{equation}
that is a difference morphism. We added further the
$\{ D\sb 1 , \, D\sb 2, \,  \cdots , D\sb d    \}$-differential structure 
on $F(\N , \, L\n )$ or we replace the target space 
$F(\N , \, L\n )$ of the universal Euler morphism \eqref{1m27.0.5}
 by $F(\N , \, L\s )$ so that we have 
 $$
 \iota \colon L \to F ( \N , \, L\s ). 
 $$
\par 
In Definition \ref{1a4.6} below, we work in the difference-differential ring 
$F(\N , \, L\s)$ with difference operator 
$\Sigma $ and differential operators  
$
\{ D\sb 1 , \, D\sb 2, \,  \cdots , D\sb d    \}.
$
We identify  with $L\s$ the set of constant  functions on $\N$. 
Namely, 
$$
L \s = \{ f \in F (\N , L\s ) \, | \, f(0 ) = f(1) = f(2) = \cdots \in L\s \}.
$$
Therefore $L\s$ is a difference-differential sub-field of the 
difference-differential 
ring $F ( \N , \, L\s )$. The action of the  shift  operator 
on $L\s$ 
 being trivial, the notation is adequate. Similarly, 
we set  
$$
k \s := \{ f \in F (\N , L\s ) \, | \, f(0 ) = f(1) = f(2) = \cdots \in   k \subset L\s \}.
$$
So both the shift operator and the derivations act trivially on 
$k\s$ and so $k\s$ is a difference-differential sub-field of $L\s$ and hence 
of the difference-differential algebra $F(\N , \, L\s )$. 
\begin{definition}\label{1a4.6}
The Galois hull $\eL / \K $ is a difference-differential sub-algebra extension 
of $F( \N , \, L\s)$, where 
$\eL$ is the difference-differential sub-algebra generated by the 
image $\iota (L)$ and $L\s$ 
and $\K $ is the sub-algebra generated by the image $\iota (k ) $ 
and $ L\s$. So $\eL / \K$ is a difference-differential algebra  extension with difference operator $\Sigma $ and derivations 
$
\{ D\sb 1 , \, D\sb 2, \,  \cdots , D\sb d    \}.
$
\end{definition} 
%%%%%%%%%%%%%%%%%
 %%%%%%%%%%%%%%%%%%%%%%%%%%%%%%%%%%%%%%%%%%%%%%%%%%%%%%%
 \subsubsection{The functor $\mathcal{F} \sb{L/k}$ of infinitesimal deformations
 for a difference field extension}\label{9.25d}
%Combining the universal Euler morphism  
%and 
%%%%%%%%%%%%%%%%%%%%%%%%%%%%%%%%%%%%%%%%%%%%%%%%%%%
For the partial differential 
field $L\s$, 
we have the universal Taylor morphism
\begin{equation}\label{1m27.1}%m27.1
\iota \sb {L\s} \colon L\s \to L\n [[ W\sb 1, W\sb 2, \cdots , W\sb d ]]
=L\n [[W]]. 
\end{equation}
 The universal Taylor morphism \eqref{1m27.1} gives a 
 difference-differential morphism 
 \begin{equation}\label{1m27.2}
 F(\N , L\s ) \to F(\N ,  L\n [[ W\sb 1, W\sb 2, \cdots , W\sb d ]] ). 
 \end{equation}
Restricting the morphism \eqref{1m27.2} to the difference-differential 
sub-algebra $\eL $ of $F(\N , L\s )$, we get a difference-differential 
morphism 
$\eL \to  F(\N , L\n [[ W\sb 1, W\sb 2, \cdots , W\sb d ]] )$
that we denote by $\iota $. So we have the 
difference-differential 
morphism 
\begin{equation}\label{1m27.4}
\iota \colon \eL \to 
F(\N ,  L\n [[ W\sb 1, W\sb 2, \cdots , W\sb d ]] ).
\end{equation}
Similarly for every commutative $L\n$-algebra $A$, 
thanks to the differential morphism 
$$
L\n[[W]] \to A[[W]], 
$$
we have the 
canonical difference-differential morphism 
\begin{equation}\label{1m27.3}
\iota \colon \eL \to F(\N ,  A [[ W\sb 1, W\sb 2, \cdots , W\sb d ]] ).
\end{equation}
We define  the functor 
$$
\mathcal{F}\sb {L/k} \colon (Alg/ L\n) \to (Set)
$$
from the category $(Alg/L\n)$  of commutative $L\n$-algebras to the category 
$(Set)$ of sets, by associating to a commutative $L\n$-algebra $A$,  the set of  
 infinitesimal deformations of the canonical morphism \eqref{1m27.4}.  
So 
\begin{multline*}
\mathcal{F}\sb{L/k}(A)
= \{ f\colon \eL \to 
F(\N,  \, A [[ W\sb 1, W\sb 2, \cdots , W\sb d ]] )\, | \, 
 f \,\text{\it \/ is a differential }
 \\
 \,\text{\it \/morphism 
 congruent to the canonical  morphism } \iota 
 \,\text{\it \/ modulo nilpotent elements} \\ 
 \,\text{\it \/such that} 
 f = \iota \,\text{\it \/ when restricted on the sub-algebra } \K 
\}.
 \end{multline*}
See Definition 2.13 in \cite{mori09},  
 for a rigorous definition. 
 \subsubsection{Group functor $\infgal(L/k)$ of infinitesimal automorphisms for a difference field extension} 
The Galois group in our Galois  theory is the group functor 
$$
\infgal (L/k) \colon (Alg/L\n ) \to (Grp)
$$
 defined 
 by 
 \begin{multline*}
 \infgal (L/k) (A) =
  \{
  \, 
 f \colon \eL \hat{\otimes} \sb {L\s} A[[W]] \to 
 \eL \hat{\otimes} \sb {L\s} A[[W]]
 \, | \, 
 f 
 \,\text{\it \/
  is a difference-differential} \\ 
  \K \otimes \sb {L\s}A[[W]]   \,\text{\it \/-automorphism 
 continuous with respect to
  the $W$-adic topology} \\
  \,\text{\it \/ and congruent to the identity modulo nilpotent elements 
  }
    \}
 \end{multline*}
 for a commutative  $L\n$-algebra $A$. 
 Here the completion is taken with respect to the $W$-adic topology.
 See Definition 2.19 in \cite{mori09}. 
\par
Then the group functor $\infgal (L/k)$ operates on the functor 
$\mathcal{F}\sb {L/k}$ in such a way that 
the operation $(\infgal (L/k), \, \mathcal{F}\sb {L/k} )$ is a 
principal homogeneous space (Theorem2.20, \cite{mori09}).   
\subsection{Introduction of more precise  notations}
\label{9.25a}
So far, we explained general differential 
Galois theory and general difference Galois theory. 
To go  further we have to make our notations more precise.
\par
For example, we defined the Galois hull for a differential field extension 
in Definition \ref{a4.6} and the
Galois hull for a difference field extension in Definition \ref{1a4.6}. 
Since they are defined by the same principle, we denoted both of them by $\eL / \K$.  
 We have to, however, distinguish them. 
 \begin{definition} 
We denote the Galois hull
for a differential field extension by $\eL \sb \gd /\K \sb\gd$
and we use the symbol $\eL \sb \sigma / \K \sb \sigma $ 
for the Galois hull of a difference field extension. 
\end{definition}
We also have to distinguish the functors $\mathcal{F}\sb {L/k}$ 
and $\infgal 
(L/k)$
in the differential case and in the difference case: we add the suffix $\gd$ for the differential case 
and the suffix $\sigma$ for the difference case so that 
\begin{enumerate}
\item 
We use 
$\mathcal{F}\sb {\gd  L/k}$ and $\infgal \sb \gd
(L/k)$ when we deal with differential algebras.
\item 
We use $\mathcal{F}\sb {\sigma L/k}$ and $\infgal \sb \sigma 
(L/k)$ for difference algebras.
\end{enumerate}
\par 
We denoted, according to our convention, 
for a commutative algebra $A$ the category of commutative $A$-algebras by 
$(Alg/A)$. As we are going to consider the category of not necessarily commutative 
$A$-algebras. This notation is confusing. 
So we clarify the notation.
\begin{definition} 
 We often denote the category of commutative 
$A$-algebras by $(CAlg/A)$.
\end{definition}
\par
%%%%%%%%%%%%%%%%%
%%%%%%%%%%%%%%%%%
\section{Hopf Galois theory}
%So far we treated difference equations and differential equations. 
Picard-Vessiot theory is a Galois theory of linear differential or difference equations. 
The idea of introducing Hopf algebra in Picard-Vessiot theory is traced back to Sweedler \cite{swe69}. 
Specialists in Hopf algebra succeeded in unifying 
Picard-Vessiot theories for differential equations and difference equations \cite{amaetal09}. They further succeeded in generalizing the Picard-Vessiot 
theory for difference-differential equations, where the operators are 
not necessarily commutative. 
 Heiderich \cite{hei10} combined the idea of 
Picard-Vessiot  theory via Hopf algebra with our general  
Galois theory for non-linear equations \cite{ume96.2}, \cite{mori09}.
His general theory includes a wide class of difference and differential algebras. %It seems, however, that some algebras with operators are excluded of his theory. 
%Sesquilinear difference algebra in Andr\'e \cite{and} is such an example.      
\par
%%%%%%%%%%%%%%%%%%%%%%47
There are two major advantages in his theory.  
\begin{enumerate}
\renewcommand{\labelenumi}{(\arabic{enumi})}
\item Unified study of non-linear 
differential equations and difference equations.
\item Generalization of universal Euler morphism and Taylor morphism.
\end{enumerate}
\par
Let $C$ be a field. For $C$-vector spaces $M, \, N$, we denote 
by $\sb  C \mathbf{M} (M, N)$ the set of $C$-linear maps from $M$ to $N$. 
%%%%%%%%%%%%%%%%%%%%%%%%%%46n
\begin{example}\label{9.18a}
Let $\calH : = C[\G\sb a] = C [t]$ be the $C$-Hopf algebra of the coordinate 
ring of the additive group scheme $\G\sb {a C}$ over the field $C$. Let $A$ be a commutative $C$-algebra 
and 
$$
\Psi\in \, \sb C\M ( A\otimes\sb C \calH, A)      
= \, \sb C \M (A, \, \sb C\M (\calH , A))
$$ 
so that $\Psi$ defines two $C$-linear maps 
\begin{enumerate}
\renewcommand{\labelenumi}{(\arabic{enumi})}
\item $\Psi\sb 1 \colon A\otimes\sb C \calH \to  A$,
\item $\Psi \sb 2 :A \to  \, \sb C\M (\calH , A)$.
\end{enumerate}
\begin{definition}
We say that $(A, \Psi )$ is an $\calH$-module algebra if the following equivalent  conditions are satisfied. 
\begin{enumerate}
\renewcommand{\labelenumi}{(\arabic{enumi})}
\item The $C$-linear map $\Psi\sb 1 \colon A\otimes\sb C \calH \to  A$
defines  an operation of the $C$-algebra  
$\calH$ on the $C$-algebra $A$, 
\item The $C$-linear map $$\Psi \sb 2 :A \to  \, \sb C\M (\calH , A)$$
is a $C$-algebra morphism, the dual 
$\sb C\M (\calH , A )$ 
of co-algebra $\calH$ being a $C$-algebra.    
\end{enumerate} 
\end{definition}
Concretely the dual algebra $\sb C\M (\calH , A)$ is the formal power series ring $A[[X]]$.
\par It is a comfortable  exercise to examine
that $( A, \Psi )$ is an  
$\calH$-module algebra if and only if 
$A$ is a differential algebra with derivation $\gd$ such 
that $\gd (C) =0$. 
 When the equivalent conditions are satisfied,  for every element $a$ in the algebra  $A$, 
$\Psi (a\otimes t) = \gd (a)$ and   
the $C$-algebra morphism  
$$
\Psi \sb 2 :A \to  \, \sb C\M (\calH , A) =A[[X]]
$$
is the universal Taylor morphism. 
So 
$$
\Psi\sb{2}(a) = \sum\sb{n=0}^{\infty} \frac{1}{n!}\delta^{n}(a)X^{n}\,\, \in A[[X]]
$$ 
for every $a \in A$. 
See Heiderich \cite{hei10}, 2.3.4.
\end{example}
In Example \ref{9.18a},  we explained the differential case.
If we take an appropriate {\it bialgebra\/} for $\calH$, we get difference structure and the universal 
Euler morphism. See \cite{hei10}, 2.3.1.  
More generally we can take any bialgebra $\mathcal{H}$ to get 
an algebra $A$ with operation of the algebra $\mathcal{H}$ and a 
morphism 
$$
\Psi \sb 2 :A \to  \, \sb C\M (\calH , A) 
$$
generalizing the universal 
Taylor morphism and Euler morphism. So we can 
define the Galois hull $\eL /\K $ and develop 
a general Galois theory for a field extension $L/k$ 
with operation of the algebra $\mathcal{H}$. 
In the differential case as well as in the difference case, the corresponding bialgebra $\mathcal{H}$ is co-commutative so that the dual algebra $ \sb C \mathrm {M}(\mathcal H,  \, A)$
is a commutative algebra. Consequently the 
Galois hull $\eL /\K$ that are sub-algebras in the 
commutative algebra 
 $ \sb C \mathrm {M}(\mathcal H,  \, A)$.
 In these case the Galois hull is an algebraic counter part of 
the geometric object, algebraic Lie groupoid. See Malgrange \cite{mal01}. Therefore 
the most fascinating queion is  
 \begin{question}
Let us consider a non-co-commutative bialgebra 
$\mathcal{H}$ and assume that the Galois hull $\eL/\K$ that 
is a sub-algebra of the dual algebra 
$ \sb C \mathrm {M}(\mathcal H,  \, A)$, is not a commutative algebra. 
Does the Galois hull $\eL / \K$ quantize the 
algebraic Lie groupoid?
\end{question}      
We answer affirmatively  the question by analyzing examples in 
\qsi field extensions.    
\begin{remark} 
How non-co-commutative the bialgebra $\mathcal{H}$ may be, 
so far as one considers  linear equations, the Galois hull  
$\eL / \K$ is a commutative sub-algebra of the 
non-commutative algebra 
$ \sb C \mathrm {M}(\mathcal H,  \, A)$. 
Hence one does not encounter quantum groups,  except for linear algebraic groups, studying generalized  
Picard-Vessiot theories. See Hardouin \cite{har10} and   Umemura \cite{ume11}. 
\end{remark}
%%%%%%%%%%%%%%%%%%%%%%%%%%%%%%%%%%%%%%%%%%%%%%%%%%%%%%%%%%%%%%%%%%41
%We consider mostly commutative algebras.  So when we speak of an 
%algebra, without 
%mentioning  that it is non-commutative, it is a commutative algebra. 
 % 
\par 
Let $C$ be a field, $q$  an element of $C$.
We use a standard notation of $q$-binomial coefficients. 
To this end, let $Q$ be a variable over the field $C$.  
\par 
We set $[n]\sb Q = \sum \sb{i=0} ^ {n-1} Q^i \in C[Q]$ for positive integer 
$n$. We need also $q$-factorial 
$$
[n]\sb Q ! :=\prod \sb{i=1} ^n [i]\sb Q \qquad 
\text{ for a positive integer $n \qquad$  and } \, %\qquad 
[0]\sb Q ! := 1 .  
$$ 
So $[n]\sb Q \in C[Q]$.
The $Q$-binomial coefficient is defined for $m, n \in \N $ by 
$$
\binom{m}{n}\sb Q = \begin{cases}
\frac{[m]\sb Q!}{[m-n]\sb Q ![n]\sb Q!} & \text{ if } m \ge n, \\
0 & \text{ if } m < n.
\end{cases}
$$
Then we can show that the rational function 
$$
\binom{m}{n}\sb Q \in C(Q)
$$
is in fact a polynomial or 
$$
\binom{m}{n}\sb Q \in C[Q].
$$
We have a ring morphism 
\begin{equation}\label{9.17c}
 C[Q] \to C[q], \qquad  Q \mapsto q
 \end{equation}
  over $C$ and we denote 
the image of the polynomial 
$$\binom{m}{n}\sb Q $$ under  morphism \eqref{9.17c} by 
$$
\binom{m}{n}\sb q.
$$  
\par

 %%%%%%%%%%%%%%%%%%%%%%%%%%%%%%44
%%%%%%%%%%%%%%%%%%%%%%%%%%%%%%%%%%%%%%%%%%%%%%%%%%%%%%%%%%%%%%%%%%%%%%%%%%%
\subsection{$q$-skew iterative $\sigma$-differential algebra \cite{har10}, \cite{hay08} }%%%%%%%%%%%%%%%%%%%%
%%%%%%%%%%%%%%%%%%%%%%%%%%%%%%%%%%%%%%%%%%
First non-trivial example of a  Hopf Galois theory dependent  on 
non-co-commutative Hopf algebra is Galois theory of $q$-skew iterative $\sigma$-differential field  extensions,   
abbreviated as \qsi field extensions.
\subsubsection{Definition of \qsi algebra}
\begin{definition}\label{a3.3}
Let $C$ be a field of characteristic $0$ and $q\not= 0$ an element of the field $C$.   
A $q$-skew iterative $\sigma $-differential algebra 
$( A, \,\sigma ,\,  \theta ^* ) = ( A, \sigma , \{ \theta ^{(i)}\} \sb{i\in \N} )$, a   
%qSI$\sigma$-
\qsi algebra for short, 
 consists of a  
$C$-algebra $A$ that is eventually non-commutative,
a $C$-endomorphism $\sigma :A \to A$ 
of the $C$-algebra $A$ and a family 
$$
\theta ^{(i)}\colon A \to A \qquad \text{ for $i \in \N$} 
$$
of $C$-linear maps 
satisfying the following conditions.
%\end{definition}
%%%%%%%%%%%%%%%%%%%%%%
\begin{enumerate}
\renewcommand{\labelenumi}{(\arabic{enumi})}
\item    $\theta ^{(0)} = \Id\sb A$, 
\item $\theta ^{(i)}\sigma    = q^i \sigma \theta ^{(i)} \qquad \text{ for every } i \in \N$, 
\item  $\theta ^{(i)}(ab)   =
\sum\sb{l+m= i} 
\sigma^{m}
(
\theta ^{(l)}(a) 
) 
\theta ^{(m)}(b)$, 
\item $\theta ^{(i)}\circ \theta ^{(j)} =\binom{i+j}{i}\sb q \theta ^{(i+j)}$.
\end{enumerate}
%We denote the qSI$\sigma$-algebra  
%$(A, \sigma , \gd ^*) =
%(A, \sigma , \{ \gd ^{(i)}\}\sb {i \in \N})$ 
%or simply by $A$.
We say that 
an element $a$ of the \qsi algebra $A$ is a constant if  
$\sigma (a) = a$ and $\theta ^{(i)}(a) = 0$ for every $i\ge 1$. 
\par
A morphism of \qsi %$\sigma$-algebras is a 
$C$-algebra morphism compatible with the endomorphisms $\sigma$ and the derivations 
$\theta ^ *$. 
\end{definition} 
\par 
Both differential algebras and difference algebras are \qsi algebras.

%\begin{align*}
%(1) \,  & \theta ^{(0)} = \Id, \\ 
%(2) \, & \theta ^{(i)}\sigma    = q^i \sigma \theta ^{(i)} \qquad \text{ for every } i \in \N, \\
%(3) \, & \theta ^{(i)}(ab)   =
%\sum\sb{l+m= i} 
%\sigma^{m}
%(
%\theta ^{(l)}(a) 
%) 
%\theta ^{(m)}(b),  \\
%(4)\, &\theta ^{(i)}\circ \theta ^{(j)} =\binom{i+j}{i}\sb q \theta ^{(i+j) 
%\end{align*}
 %%%%%%%%%%%%%%%%%%%%%%%%%%%411
 \subsubsection{Difference algebra and a \qsi algebra}\label{9.19c}
 Let $A$ be a commutative $C$-algebra and $\sigma :A \to A$ be a $C$-endomorphism of the ring $A$. So $( A , \sigma )$ is a difference algebra. 
 If we set $\theta ^{(0)} = \Id \sb A$ and 
 $$
 \theta ^{(i)} (a) = 0 \text{ for every element $a\in A$ and for } i =1,\,2, \, 3, \, \ldots .
 $$ 
 Then $(A, \, \sigma , \, \theta ^ *)$ is a 
 \qsi algebra. 
 \par 
 Namely we have a functor of the category $(Dif\!f'ce Alg/C)$ of $C$-difference algebras to the category $(q\text{-}SI \sigma \text{-}dif\!f'ial Alg/C)$ of 
 \qsi algebras over $C$: 
\begin{equation*}
(Dif\!f'ce Alg/C) \rightarrow (q\text{-SI} \sigma \text{-}dif\!f'ial Alg/C). 
\end{equation*}  
 \par 
 Let $t$ be a variable over the field $C$ and 
 let us now assume
\begin{equation}\label{8.8a}
q \not= 1 \qquad \text{for every integer } n \in \N .
\end{equation}
  We denote by $\sigma \colon C(t) \to C (t)$ the $C$-automorphism of the rational function field $C(t)$ sending the variable $t$ to $qt$. 
We consider a difference algebra extension $(A, \, \sigma )/(C(t), \, \sigma )$. 
If we set 
$$
\theta ^{(1)}(a)= \frac{\sigma (a) - a}{(q-1)t} \qquad 
\text{for every element }a \in A
$$   
and 
$$
\theta ^{(i)}=\frac{1}{[i]\sb q !}{\theta ^{(1)}}^i 
\qquad 
\text{for } i=2, \, 3, \, \ldots .
$$
 Then $(A, \sigma , \theta^{*} )$ is a \qsi algebra. Therefore if $q \in C$ satisfies \eqref{8.8a}, then we have a functor 
\begin{equation}
(Dif\!f'ce Alg/(C(t),\sigma)) \rightarrow (q\text{-SI} \sigma \text{-}dif\!f'ial Alg). 
\end{equation}
 \subsubsection{Differential algebra and \qsi algebra}
 \label{9.19b}
 Let $(A,\,  \theta )$ be a differential algebra such that 
 the field $C$ is a subfield of the ring $C\sb A$ of constants of the differential algebra $A$. We set 
\begin{align*}
\theta ^{(0)} &= \Id \sb A, \\
 \theta ^{(i)}&= \frac{1}{i!}\theta ^ i\qquad  \text{for } i = 1, \, 2, \, 3, \ldots .
\end{align*}
 Then $(A, \, \Id \sb A ,\,  \theta ^ *)$ is a \qsi algebra for $q=1$. 
 In other words, we have a functor 
 $$
 (Diff'ialAlg/C) \to (q\text{-}SI \sigma \text{-}diff'ial Alg/C)
 $$
 of the category of (\,commutative\,)  differential $C$-algebras to the category of 
 \qsi algebras over $C$. 
  We have shown that both difference algebras and differential algebras are 
 particular instances of \qsi algebra.
 
 \subsubsection{Example of \qsi  
 algebra \cite{hei10}}\label{10.3a}
 We are going to see  
 \qsi algebras on the border between  commutative algebras and non-commutative algebras. The example below seems to suggest 
  that 
 it looks natural to seek \qsi algebras in the category of non-commutative algebras. 
 \par
 An example of \qsi algebra arises from a commutative 
 $C$-difference algebra $(S, \, \sigma)$. We need, however, a non-commutative ring,   
  the twisted power series ring 
  $(S, \, \sigma )[[X]]$ over the difference ring $(S, \sigma )$ that has a natural \qsi algebra structure. 
 \par
 Namely,   let $(S, \sigma )$ be the $C$-difference ring so that 
 $\sigma :S \to S$ is a $C$-algebra endomorphism of the commutative  ring $S$. 
 We introduce the following twisted formal power series ring 
 $(S, \, \sigma ) [[X]]$
 with coefficients in $S$ 
 that is the formal power series ring $S[[X]]$ 
 as an additive group 
 with the following commutation relation 
 $$
  aX = X \sigma (a)  \qquad \text { for every $a\in S$}. 
  $$
  So more generally  
  \begin{equation}\label{a11.1}
   aX^n  = X^ n \sigma ^n (a) 
   \end{equation}
   for every $n \in \N$. 
   The multiplication of two formal power series is defined by extending \eqref{a11.1} by linearity. 
   Therefore the twisted formal power series ring $(S, \sigma )[[X]])$ is non-commutative in general. 
   By commutation relation \eqref{a11.1}, 
   we can identify 
   $$
   (S, \sigma  )[[X]] = \{\sum \sb{i=0}^\infty X^i a\sb i \, |\, 
   a\sb i \in S  \text{ for every } i\in \N \}
   $$
   as additive groups.  
  \par 
  We are going to see that 
  the twisted formal 
   power series ring 
   has a natural 
   \qsi structure. We define first a ring  endomorphism  
$$
\hat{\Sigma} \colon (S, \, \sigma ) [[X]] \to 
  (S, \, \Sigma )[[X]]
  $$
    by setting 
  \begin{equation}\label{a5.1}
  \hat{\Sigma} (\sum \sb{i=0}^\infty X^i a\sb i ) = \sum 
  \sb{i=0}^\infty X^i q ^i\sigma (a\sb i)   \qquad \text{ for every } i \in \N , 
  \end{equation} 
  for every element 
  $$
  \sum \sb{i=0}^\infty  X^i a\sb i \in (S,
   \sigma )[[X]]
   $$. 
The operators  
 $\Theta ^* =\{ \theta ^{(l)} \}\sb{
  l\in \N}$ are defined by 
  \begin{equation}\label{a4.5}
  \Theta ^{(l)}(\sum \sb{i=0}^\infty  X^ia\sb i  ) = 
  \sum \sb{i=0}^\infty X^i \binom{i+l}{l}\sb q  a\sb {i+ l}
  \qquad \text{for every }i \in \N. 
  \end{equation}
   Hence the twisted formal power series ring $(S, \, \sigma ) [[X]], \hat{\Sigma} , \Theta ^* ) $ is a non-commutative \qsi 
  ring. We denote this  \qsi ring simply by $(S, \sigma )[[X]]$.
 See \cite{hei10}, 2.3.  
   %%%%%%%%%%%%%
  %%%%%%%%%%%%%%%%%%%%%%%%%%%%%%%%%%%%%%%%%%%%%%%%%%%%%%%%%%%%%%%%%%%%%%%%%%%%%%%%%%%%%%%%%%%%%%%%%%%%%%%%%%%%%%%%%%%%%%%%%%%%%
  In particular, if we take as the coefficient difference ring $S$ 
  the difference ring 
$$(F(\N , A), \, \Sigma )$$
  of functions on $\N$ taking values in a ring $A$ in \ref{0820a}, where 
  $$
  \Sigma :F(\N , \, A ) \to F(\N , \, A)
  $$ 
  is the shift operator,  we obtain the \qsi ring 
 \begin{equation}\label{10.1a}
(  \left( F( \N , A ), \Sigma ) [[X]], \, \hat{\Sigma}
, \, \Theta ^* \right) .
 \end{equation}
\begin{remark}
We assumed that the coefficient difference ring 
$(S, \, \sigma)$ is commutative. The commutativity  assumption on the ring $S$ is not necessary. 
Consequently we can use non-commutative ring $A$ in \eqref{10.1a}
\end{remark}

\subsubsection{Universal Hopf morphism for a \qsi algebra}   
We introduced in \ref{0820a} the 
difference 
ring of functions $(F(\N , \, A), \,\Sigma )$ on the set $\N$ taking values in a ring $A$.  
It is useful to denote the function f by a matrix
\begin{equation*}
\left[\begin{array}{cccc}
0 & 1 & 2 & \cdots \\
f(0) & f(1) & f(2) & \cdots
\end{array}\right] .
\end{equation*}
\par 
Eor an element $b$ 
of a 
difference algebra $(R,\, \sigma )$ or a 
\qsi algebra 
$( R, \sigma , \, \theta ^* )$, we denote 
 by 
$u[b]$ a function on $\N$ taking values in the abstract ring $R\n$
such that 
$$
u[b](n) = \sigma ^ n (b) \qquad \text{for every } n \in \N 
$$
so that 
\begin{equation*}
u[b] = \left[\begin{array}{cccc}
0 & 1 & 2 & \cdots \\
b & \sigma^1 (b) & \sigma^2 (b) & \cdots
\end{array}\right] .
\end{equation*}

So $u[b] \in F(\N , R\n )$.

   \begin{proposition}[Proposition 2.3.17,  Heiderich \cite{hei10}]
  \label{a4.1}
   For a \qsi algebra $(R, \, \sigma , \, \theta ^* )$, hence in particular  for  
   an iterative $q$-difference ring $R$, there exists 
   a canonical morphism, which we call the universal Hopf 
    morphism 
\begin{equation}\label{9.19a}
\iota \colon (R, \, \sigma , \, \theta ^* ) 
\to 
 \left((F(\N , R\n ), \Sigma  )[[X]],\, \hat{\Sigma}, \, \hat{\Theta} ^* \right),   
 \qquad a \mapsto \sum  \sb{i=0}
^\infty  X^i u[\theta ^{(i)}(a)] 
\end{equation}
of \qsi algebras. 
 \end{proposition}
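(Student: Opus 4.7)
The plan is to verify, using the four defining properties of a \qsi algebra listed in Definition~\ref{a3.3}, that $\iota$ is a morphism of \qsi algebras, and then to note that it is universal in the spirit of the universal Taylor and Euler morphisms \eqref{9.19d}, \eqref{9.19e}.

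First I would check that $\iota$ is a homomorphism of $C$-algebras. $C$-linearity is immediate from that of each $\theta^{(i)}$ and of the assignment $u[\cdot]$, so the real content is the multiplicativity $\iota(ab)=\iota(a)\iota(b)$; this must be done carefully because the target is a \emph{twisted} power series ring. Two preliminary facts are in order: (a) the pointwise multiplicativity $u[c\sb 1]\,u[c\sb 2] = u[c\sb 1 c\sb 2]$, which follows from $u[c](n)=\sigma^n(c)$ and the fact that $\sigma^n$ is a ring endomorphism of $R\n$; and (b) the commutation identity $u[c]\,X^j = X^j\,\Sigma^j(u[c]) = X^j\,u[\sigma^j(c)]$, obtained by iterating $cX = X\sigma(c)$ in $(F(\N, R\n),\Sigma)[[X]]$ together with $\Sigma\, u[c] = u[\sigma(c)]$. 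Combining these,
\begin{align*}
\iota(a)\iota(b) &= \sum\sb{i,j} X^i u[\theta^{(i)}(a)]\,X^j u[\theta^{(j)}(b)] \\
 &= \sum\sb{k} X^k \sum\sb{i+j=k} u\bigl[\sigma^j\theta^{(i)}(a) \cdot \theta^{(j)}(b)\bigr] \\
 &= \sum\sb{k} X^k u\bigl[\theta^{(k)}(ab)\bigr] \;=\; \iota(ab),
\end{align*}
where the penultimate equality is exactly condition (3) of Definition~\ref{a3.3}.

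Next I would check compatibility with the $\sigma$- and $\theta^{(i)}$-structures. Using \eqref{a5.1} and condition (2),
$$
\hat{\Sigma}(\iota(a)) = \sum\sb i X^i q^i\, \Sigma(u[\theta^{(i)}(a)]) = \sum\sb i X^i u[q^i \sigma\theta^{(i)}(a)] = \sum\sb i X^i u[\theta^{(i)}(\sigma(a))] = \iota(\sigma(a)).
$$
Similarly, using \eqref{a4.5} and condition (4),
$$
\Theta^{(l)}(\iota(a)) = \sum\sb i X^i \binom{i+l}{l}\sb q u[\theta^{(i+l)}(a)] = \sum\sb i X^i u[\theta^{(i)}\theta^{(l)}(a)] = \iota(\theta^{(l)}(a)),
$$
where the last equality uses $\binom{i+l}{l}\sb q = \binom{i+l}{i}\sb q$.

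For the universal property, given any morphism $\varphi\colon R \to (F(\N, A),\Sigma)[[X]]$ of \qsi algebras whose target is built over a $C$-algebra $A$ equipped with a morphism $R\n \to A$ on the abstract rings, extracting the coefficient of $X^i$ in $\varphi(a)$ and evaluating at $n \in \N$ is forced by the $\hat{\Sigma}$- and $\Theta^{(l)}$-compatibility to equal the image of $\sigma^n\theta^{(i)}(a)$ under $R\n\to A$; hence $\varphi$ factors uniquely through $\iota$ followed by the induced map $(F(\N, R\n),\Sigma)[[X]] \to (F(\N, A),\Sigma)[[X]]$. The main obstacle I expect is bookkeeping the non-commutativity in the multiplicativity step: one must match the power $\sigma^j$ that appears inside $u[\sigma^j\theta^{(i)}(a)]$ after moving $u[\theta^{(i)}(a)]$ past $X^j$ with exactly the $\sigma^m$ in the Leibniz rule (3), with $j=m$, so that the inner sum collapses precisely onto $\theta^{(k)}(ab)$; once the orientation conventions for $cX = X\sigma(c)$ are fixed consistently, the rest is a short bookkeeping exercise.
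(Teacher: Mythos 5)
Your verification is correct, and it is the natural (and essentially only) argument: the paper itself gives no proof, deferring to Heiderich's Proposition 2.3.17, so there is nothing to diverge from. Your three key matches all check out — the twisted commutation $u[c]X^{j}=X^{j}u[\sigma^{j}(c)]$ together with pointwise multiplicativity of $u[\cdot]$ reproduces exactly the $\sigma^{m}$-twisted Leibniz rule (3), condition (2) gives $\hat{\Sigma}\circ\iota=\iota\circ\sigma$, and condition (4) with the symmetry $\binom{i+l}{l}_{q}=\binom{i+l}{i}_{q}$ gives $\hat{\Theta}^{(l)}\circ\iota=\iota\circ\theta^{(l)}$; the universal-property remark is extra, since the proposition only asserts existence of the canonical morphism.
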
   %
% Here is the definition of the universal twisted Taylor morphism. 
% $$
% \iota (a) = \sum \sb {i=0}^\infty u[\delta ^ {(i)}(a)]X^i,
% $$
% where we use the following notation. 
% For an element $b \in R$, 
% we denote by $u(b)$  a function on $\N$ taking values in $R$
 %such that 
 %$$
% u[b](n) = \sigma ^n(b) \qquad \text{ for every } n \in \N 
% $$ 
 %so that $u[b] \in F(\N , R\n)$. 
%%%%%%%%%%%%%%%%%%%%%%%%%%%%%%%%%%%%%%%%%%%%%%%%%%%%
\par 
 We can also characterize the universal Hopf morphism as the 
 solution of a universal mapping property. 
 \par 
 When $q=1$ and $\sigma =\Id \sb R$ 
 and $R$ is commutative  
 so that 
 the \qsi ring $(R, \, \Id\sb R , \, \theta ^* )$ is  simply a 
 differential algebra as we have seen in \ref{9.19b}, the universal Hopf morphism \eqref{9.19a}    is the universal 
 Taylor morphism in \eqref{9.19d}. 
 Similarly a commutative difference ring is a \qsi algebra 
 with trivial derivations as we noticed in \ref{9.19c}. In this case the universal Hopf morphism \eqref{9.19a} is nothing but 
 the universal 
 Euler morphism \eqref{9.19e}.   
%$$ 
%u(\delta ^ {(i)}(a))\in F(\N , R\n )$$
% is a function on $\N$ 
%taking values in  $R\n$ such that 
%$$ u(\delta ^ {(i)}(a))(n)= \sigma ^n (\delta ^ {(i)}(a))$.
 Therefore the
 universal Hopf morphism unifies the universal Taylor morphism and the Universal Euler morphism.  
 \par 
 Let us recall  the following fact. 
 \begin{lemma}
 Let $(R, \,  \sigma , \, \theta ^* )$ be a \qsi domain. 
 If the endomorphism $\sigma \colon R \to R$ is an automorphism, then 
 the field $Q(R)$ of fractions of $R$ has the unique  structure of 
 \qsi field.  \par 
 If moreover $R$ is an \iqd algebra,  %%%419
  then the field $Q(R)$ of fractions of $R$ is also an \iqd field.       
 \end{lemma}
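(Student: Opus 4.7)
The plan is to extend $\sigma$ and the family $\theta^{*}$ from $R$ to $Q(R)$ by solving the linear equations that the axioms of Definition \ref{a3.3} impose on the identity $b\cdot b^{-1}=1$. Uniqueness will be automatic, because at each stage the extension is the unique solution of an equation forced by an axiom.

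First I would extend $\sigma$. Since $\sigma$ is an automorphism of $R$, in particular $\sigma(b)\neq 0$ for every nonzero $b\in R$, so the rule $\sigma(a/b):=\sigma(a)/\sigma(b)$ gives the unique ring automorphism of $Q(R)$ extending $\sigma$. Next I would extend each $\theta^{(i)}$. Axioms (1) and (3) together force $\theta^{(i)}(1)=0$ for $i\ge 1$. For $b\in R\setminus\{0\}$, applying axiom (3) to $b\cdot b^{-1}=1$ gives, for $i\ge 1$,
\begin{equation*}
0=\sigma^{i}(b)\,\theta^{(i)}(b^{-1})+\sum_{\substack{l+m=i\\ m<i}}\sigma^{m}(\theta^{(l)}(b))\,\theta^{(m)}(b^{-1}),
\end{equation*}
which, together with the invertibility of $\sigma^{i}(b)$ in $Q(R)$, inductively determines $\theta^{(i)}(b^{-1})\in Q(R)$. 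I would then set $\theta^{(i)}(a/b):=\sum_{l+m=i}\sigma^{m}(\theta^{(l)}(a))\,\theta^{(m)}(b^{-1})$, as dictated by (3). Uniqueness is built into this construction, because any extension of $\theta^{(i)}$ satisfying (3) must satisfy both the displayed equation and the definition on $a/b$.

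The main obstacle is verifying well-definedness and axioms (2)--(4) on $Q(R)$. For well-definedness one must check $\theta^{(i)}(ac/bc)=\theta^{(i)}(a/b)$; this reduces to the compatibility of the inductively defined values on inverses with the identity $(bc)^{-1}=c^{-1}b^{-1}$, which follows by comparing the displayed linear equations for $\theta^{(i)}((bc)^{-1})$ and $\theta^{(i)}(c^{-1}b^{-1})$ and invoking the induction hypothesis. Axiom (2), $\theta^{(i)}\sigma=q^{i}\sigma\theta^{(i)}$, I would verify on inverses $b^{-1}$ by induction from the displayed relation and axiom (2) on $R$, and extend to products via (3). The iteration law (4), $\theta^{(i)}\circ\theta^{(j)}=\binom{i+j}{i}_{q}\theta^{(i+j)}$, is the most delicate step: I would prove it on inverses by induction on $i+j$, applying both sides to $b\cdot b^{-1}=1$, expanding via the Leibniz rule (3), and matching terms using that (4) already holds on $R$; it then extends to arbitrary $a/b$ by the Leibniz rule and linearity.

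For the last assertion, if $R$ is an \iqd algebra the extension formulas above apply verbatim. The additional constraints imposed by the \iqd structure are again polynomial identities in values of $\sigma^{m}(\theta^{(l)}(\,\cdot\,))$ on elements of $R$, so they propagate to $Q(R)$ through the same inversion and Leibniz formulas, and uniqueness follows by the same forcing argument.
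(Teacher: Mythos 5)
Your argument is correct in outline, but note that the paper itself offers no proof of this lemma at all: it simply refers the reader to Proposition 2.5 of \cite{hay08}. So the comparison is really with the standard localization argument in the literature, and that is exactly what you reproduce: extend $\sigma$ multiplicatively (using that $\sigma$ is an automorphism so $\sigma(b)\neq 0$), solve the Leibniz identity on $b\cdot b^{-1}=1$ for $\theta^{(i)}(b^{-1})$ by induction on $i$ using the invertibility of $\sigma^{i}(b)$ in $Q(R)$, and observe that every step is forced, which gives uniqueness. Two remarks. First, the step you correctly flag as delicate, the iteration law (4) on $Q(R)$, does go through by your double-Leibniz expansion, but it silently uses the iterated form $\theta^{(l')}\sigma^{m}=q^{l'm}\sigma^{m}\theta^{(l')}$ of axiom (2) and a $q$-Vandermonde--type rearrangement to match the lower-order terms against those of $\binom{i+j}{i}_{q}\theta^{(i+j)}(b\,b^{-1})=0$; this is a genuine computation you should not wave at. Second, there is a cleaner packaging that the paper's own machinery suggests: axiom (3) says precisely that $a\mapsto\sum_{i}X^{i}\theta^{(i)}(a)$ is a ring homomorphism $R\to (Q(R),\sigma)[[X]]$ into the twisted power series ring, and a nonzero $b\in R$ maps to a series with invertible constant term $b$, hence to a unit. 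The universal property of localization then extends this homomorphism uniquely to $Q(R)$, which disposes of well-definedness, additivity, the Leibniz rule and uniqueness in one stroke, leaving only (2) and (4) to check by your inductive comparison of forced equations. Either way the proof is sound; the final assertion about \iqd algebras is, as you say, a matter of noting that the defining identities are preserved by the (unique) extension.
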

 \begin{proof}
 See for example, Proposition 2.5 of \cite{hay08}.  
 \end{proof}
\par 
We can interpret  the Example in \ref{10.3a} from another 
view point. We constructed there from a difference ring 
$(S, \, \sigma )$  a \qsi algebra $((S, \, \sigma )[[X]], \, 
\hat{\Sigma} , \, \hat{\Theta} ^* ) $. We notice that this procedure 
is a particular case of Proposition \ref{a4.1}. 
In fact, given a difference ring $(S, \, \sigma) $. So as in \ref{9.19c}, by adding the trivial derivations, we 
get the \qsi algebra $(S, \, \sigma , \, \theta ^*)$, where 
\begin{align*} 
\theta ^{(0)} &= \Id \sb S, \\ 
\theta ^{(i)} &= 0 \qquad \text{ for }i \ge 1. 
\end{align*}
Therefore we have the universal Hopf morphism 
$$
(S, \, \sigma , \, \theta ^*) \to (F(\N , \, S\n )[[X]], \hat{\Sigma} , \, \hat{\Theta} ^* )  
$$
by Proposition \ref{a4.1}.
So we obtained the \qsi algebra $ (F(\N , \, S\n )[[X]], \, \hat{\Sigma},  , \, \hat{\Theta} ^* ) $ as a result of composite of 
two functors. Namely, 
\begin{enumerate}
\renewcommand{\labelenumi}{(\arabic{enumi})}
\item 
The functor$\colon $(\, Category of Difference algebras\,) $\to$
 (\,Category of \qsi algebras\,) of adding trivial derivations
\item
The functor $\colon$ (\, Category of \qsi algebras\,) $\to $ 
(\,Category of 
\qsi algebras\,),  $A \mapsto B$ if there exists the universal
Hopf morphism $A \to B$.
\end{enumerate}

 \subsubsection{Galois hull 
$ \eL /\K$ for a \qsi field extension}\label{9.25c} 
 We can develop a general Galois theory for 
 \qsi
  field extensions analogous to our theories in \cite{ume96.1}, \cite{ume06} and \cite{ume07} thanks to the universal Hopf morpism.  
 Let $L/k$ be an extension of 
 \qsi 
  fields such that 
 the abstract field $L\n$ is finitely generated over the abstract field $k\n$. 
 Let us assume that we are in characteristic $0$. General theory in 
 \cite{hei10} %and 
 %\cite{hei2} 
 works, however, also in characteristic $p\ge 0$. 
%%%%%%%%%%%%%%%%%%%%%%%%%%%%%%%%%%%%%%%%%%%%%%%%%%%%%%%%%%%%%%%%%%%%%%%% 
%%%%%%%%%%%%%%%%%%%%%%%%%%%%%%%%%%%%
We have by Proposition \ref{a4.1} the universal Hopf  morphism 
\begin{equation} 
 \iota :(L, \, \sigma , \, \theta ^* ) \to \left( (F(\N , L\n ), \Sigma )[[X]], 
\,  \hat{\Sigma},\,  \hat{\Theta} ^* \right)
 \end{equation}
 so that the image $\iota (L)$ is a copy of 
 the \iqd field $L$. We have another copy of $L\n$. 
The set 
%$\{ f = \sum \sb {i=0} a\sb i X^i \in F(\N , L\n )[[X]] \, | \, 
\begin{multline}\label{a1.1}
 \{ f = \sum \sb {i=0}^\infty X^i a\sb i  \in F(\N , L\n )[[X]] \, | \, 
a\sb i = 0 \text{ for every } i  \ge 1 \text{ and }
 \, \Sigma (a\sb 0 )= a\sb 0
  \} \\
  =\{ f \in F(\N , L\n )[[X]] \, | \, \hat{\Sigma}(f) = f , \, \hat{\Theta}^{(i)} (f) = 0 \text{ for every } i \ge 1 \} 
\end{multline}
forms the sub-ring 
of constants in the
\qsi algebra of the 
 twisted power series 
 $$\left(
 (
 F(\N , \, L\n ) , \Sigma 
 )
[[X]],\, \hat{\Sigma}, \, \hat{\Theta}^* 
\right) .
$$ 
We identifyy $L\n$ with the ring of constants through
the following morphism. 
For an element $a\in L\n$, we denote the constant function $f\sb a$ on $\N$ taking the value $a \in L\n$ 
so that 
\begin{equation}\label{9.24a}
L\n \to  \left(
 (
 F(\N , \, L\n ) , \Sigma 
 )
[[X]],\, \hat{\Sigma}, \, \hat{\Theta}^* 
\right) 
    , \qquad a \mapsto f\sb a
\end{equation}
is an injective  ring morphism.
We may denote the sub-ring in \eqref{a1.1} by $L\n$. In fact, as an abstract ring 
it is isomorphic to the abstract field $L\n$ and the endomorphism $\hat{\Sigma}$  and  
the derivations $\hat{\Theta} ^{(i)}, \,  (i \ge 1 )$ operate trivially on the 
sub-ring. %So we have chosen the sub-ring $L\n$ in the twisted formal power series ring $F(\N , L\n ), \Sigma )[[X]].$  
 \par 
 We are now exactly in the same situation as in 
\ref{9.19h} of the differential case and in \ref{9.19g} of  the difference case.
%%%%%%%%%%%%%%%%%%%%%%%%%
 We choose a mutually commutative basis $\{ D\sb 1, D\sb 2, \cdots , D\sb d \}$ of the 
 $L\n$-vector space $\mathrm{Der}(L\n / k\n )$ of $k$-derivations.   
 So $L\s := (L\n,   \{ D\sb 1, D\sb 2, \cdots , D\sb d \})$ is a 
differential field.  
%%%%%%%%%%%%%%%%%%%%%%%%%%%%%%%%%%%%%%%%%%%%%%%%%%%%%%%%%%%%%%%%%%%%%%41\par  
\par 
So we introduce derivations $D\sb 1,\,  D\sb 2 , \, \cdots,\,  D\sb d $ 
operating on the coefficient ring $F(\N , \, L\n )$. In other words,
we replace the target space 
$F(\N , \, L\n )[[X]]$ by   
 $F(\N ,\,  L\s )[[X]]$.  
Hence the universal Hopf morphism in Proposition \ref{a4.1} becomes 
$$
\iota :L \to F(\N , \, L\s ) [[X]]).    
  $$
In the twisted formal power series ring
$(F(\N , L\s)[[X]], \hat{\Sigma}, \, \hat{\Theta} ^* )$, 
we add differential operators 
$$
D\sb 1, D\sb 2 , \cdots, D\sb d .
$$
So  we have a set $\mathcal{D}$ of the following operators on the ring 
$(F(\N , L\s ), \Sigma )[[X]] $.
\begin{enumerate}
\renewcommand{\labelenumi}{(\arabic{enumi})}
\item The endomorphism $\hat{\Sigma}$. 
$$%%45
\hat{\Sigma} (\sum \sb {i=0}^\infty  X^ i a\sb i ) = \sum \sb {i=0}^
\infty 
 X^i q^ i (\Sigma ( a\sb i )),
$$
$\Sigma :F(\N , L\s ) \to F(\N , L\s )$ being the shift operator of the  ring of functions on $\N$. 
\item  
The $q$-skew  $\hat{\Sigma}$-derivations $\hat{\Theta} ^{(i)}$'s in \eqref{a4.5}.  
\begin{equation*}%\label{a4.5}
  \hat{\Theta} ^{(l)}(\sum \sb{i=0}^\infty X^i a\sb i   ) = 
  \sum \sb{i=0}^\infty 
 X^i \binom{l+ i}{l}\sb q a\sb {i+ l} \qquad  \text{for every } l \in \N .
 \end{equation*}
\item 
The derivations $D\sb 1,\, D\sb 2 ,\, \cdots,\, D\sb d $ 
operating 
through the coefficient ring $F(\N , L\s )$ as in \eqref{9.19a}.   
  \end{enumerate} 
 \par 
 Hence we may write $F(\N , L\s ), \, \mathcal{D} )$, where 
 $$
 \mathcal{D} = \{ \hat{\s\Sigma} , \, D\sb 1 ,\  D\sb 2,\, \cdots , \, D\sb d, \, \hat{\Theta} ^ *    \}  \text{ and } \hat{\Theta} ^ *
  = \{  
 \hat{\Theta} ^{(i)} \} \sb {i \in \N }.  
 $$
 \par
% For an element $a\in L\s$, we denote by $a\s $ the constant function on $\N $ taking the value $a$ so that 
%$$
%a\s (n) = a  \qquad \text{ for every } n \in \N. 
%$$
We identify using inclusion \eqref{9.24a}
$$
L\s \to F((\N ,\, L\s )[[X]]. 
$$
%Therefore $a\s  \in F(\N , L\s)$. 
% The latter is a sub-ring  of the ring of twisted power series ring $F(\N, L\s)[[X]]$. 
% We can canonically  identify the ring 
 %$$
 %F(\N , L\s) =\{ \sum\sb{i=0}^\infty X^i a\sb i  \, |\, a\sb i = 0 \text{ for every } i \in \N^ *\}.      
  %$$
 % Namely, we have canonical inclusions 
  %$$  
  %L\s \to F(\N , L\s ) \to F(\N , L\s )[[X]].
  %$$
  We sometimes denote the image $a\sb f$ of an element $a\in L\s$ by 
  $a\s$. 
 \par
  We are ready to define Galois hull as in Definition \ref{a4.6}. 
\begin{definition}
The Galois hull $\eL / \K $ is a $\mathcal{D}$-invariant sub-algebra 
of $F( \N , \, L\s)[[X]]$,  
 where 
$\eL$ is the $\mathcal{D}$-invariant sub-algebra generated by the 
image $\iota (L)$ and $L\s$ 
and $\K $ is the 
$\mathcal{D}$-invariant sub-algebra generated by the image $\iota (k ) $ 
and $ L\s$. So $\eL / \K$ is a $\mathcal{D}$-algebra  extension. \par 
As in \ref{9.25a}, if we have to emphasize that we deal with \qsi-algebras, we denote the Galois hull by 
$\eL\sb {\sigma\, \theta }/
\K \sb{\sigma \, \theta}$.
\end{definition} 
 \par 
We notice  that we are now in a totally new situation. 
In the differential case, the universal 
Taylor morphism maps the given fields to the commutative algebra  
of the formal power series ring so that 
the 
Galos hull is an extension of commutative algebras. 
Similarly for the universal Euler morphism of a  difference rings. The commutativity of the Galois hull comes from the fact in the differential and the difference case, the theory depends on the co-commutative Hopf algebras. 
When we  treat the \qsi algebras, the Hopf algebra 
$\mathcal{H}$ 
is not co-comutatives so that the 
Galois hull $\eL /\K$ that is an algebra extension in 
the non-commutative algebra of 
twisted  formal power series algebra, 
the dual algebra of $\mathcal{H}$. 
So even if we start from a (\,commutative\,) field,  extension $L/k$, the 
Galois hull can be non-commutative. See the Examples in sections 
\ref{10.4a}, \ref{10.4b} and \ref{10.4c}.
We also notice that when $L/k$ is a Picard-Vessiot  
extension fields in \qsi algebra, the 
Galois hull is commutative \cite{ume11}.
\par 
As the Galois hull is a non-commutative, 
if we limit ourselves to the category of commutative $L\n$-algebras $(Alg/L\n)$, 
we can not detect non-commutative nature of the \qsi 
field extension. So it is quite natural to extend the 
functors over the category of non-commutative algebras.    
%%%%%%%%%%%%%%%%%%%%%%%%%%%%%%% 
\subsubsection{Infinitesimal deformation functor 
$\mathcal{F} \sb{L/k}$ for  a \qsi field extension.} 
 We pass to the task of defining the infinitesimal deformation functor $\mathcal{F}\sb {L/k}$ and the 
 infinitesimal automorphism functor $\infgal (L/k)$. 
 \par 
%%%%%%%%%%%%%%%%%%%%%%%%%%%%%%%%%%%%%%%%%%%%%%%%%
 We have the universal Taylor morphism 
 \begin{equation}\label{a4.10}
 \iota \sb{L\s}\colon L\s \to (
 L\n[[W\sb 1,\, W\sb 2, \, \cdots ,\,  W\sb d ]], 
 \{ 
 \frac{\partial}{\partial W\sb 1}, \, \frac{\partial}{\partial W\sb 2} ,\,  \cdots , \,  \frac{\partial}{\partial W\sb d} 
 \}
 )         
\end{equation}
 as in \eqref{m27.1}.
 So by \eqref{a4.10}, we have the canonical morphism 
\begin{equation}\label{a4.11}
 (F(\N , L\s )[[X]], \, \mathcal{D} ) \to (F(\N , \, L\n [[W]])[[X]],\,  \mathcal{D} ), 
 \end{equation}
 where in the target space 
 $$
 \mathcal{D} = 
 \{ \Sigma , 
 \frac{\partial}{\partial W\sb 1},\,  \frac{\partial}{\partial W\sb 2} , \, \cdots ,\,  \frac{\partial}{\partial W\sb d},  \,  
  \Theta ^ *    \}  
  $$
 by abuse of notation.  
 \par
 For an $L\n$-algebra $A$, the structure morphism 
 $ L\n \to A$ induces 
  the canonical morphism 
 \begin{equation}\label{a4.12}
 (F(\N ,\,  L\n [[W]])[[X]],\, \mathcal{D} ) \to (F(\N , \, A [[W]])[[X]],\,  \mathcal{D} ).
 \end{equation}
The composite of the $\mathcal{D}$-morphisms  \eqref{a4.11}  and \eqref{a4.12}  
gives us the  canonical morphism 
\begin{equation}\label{a4.13}
 (F(\N , \, L\s )[[X]],\,  \mathcal{D} ) \to 
 (F(\N , \, A [[W]])[[X]],\,  \mathcal{D} ).
\end{equation}
 The restriction of the morphism 
 \eqref{a4.13} to the $\D$-invariant sub-algebra $\eL$     
 gives us the canonical morphism 
\begin{equation}
\iota \colon (\eL,\,  \mathcal{D} ) \to  (F(\N , \, A [[W]]), \, \mathcal{D} ).
\end{equation}
 We can define the functors exactly as in paragraphs  \ref{9.25e} for the 
 differential case and \ref{9.25d} for the difference case.
 \begin{definition}\label{9.25f}
[Introductory definition]
 We define   the functor 
$$
\mathcal{F}\sb {L/k} \colon (Alg/ L\n) \to (Set)
$$
from the category $(Alg/L\n)$  of $L\n$-algebras to the category 
$(Set)$ of sets, by associating to an $L\n$-algebra $A$,  the set of  
 infinitesimal deformations of the canonical morphism \eqref{a4.13}.  
\par
Hence 
\begin{multline*}
\mathcal{F}\sb{L/k}(A)
= \{ f\colon (\eL ,\,  \D ) \to 
( F(\N ,\,  A [[ W\sb 1, \, W\sb 2, \, \cdots ,\,  W\sb d ]] )[[X]],\,  \D ) \, | \, 
 f \text{ is a  }
 \mathcal{D}\text{-morphism }\\
 \text{congruent to the canonical  morphism } \iota 
 \text{ modulo nilpotent elements} \\ 
 \text{such that } 
 f = \iota \text{ when restricted to the sub-algebra } \K 
\}.
 \end{multline*}
 \end{definition} % Definition in an appropriate notations.} 
The introductory definition \ref{9.25f} is exact, 
analogous to Definitions in \ref{9.25e} 
and \ref{9.25d}, and 
easy to understand.
Since as we explained  in \ref{9.25c}, we,
however,  
 have to consider also deformations over non-commutative algebras, the notation is confusing. 
\par 
We have to treat both the category of 
commutative algebras and that of non-commutative algebras. 
\begin{definition}
All the associative algebras that we consider 
are unitary and the morphisms between them are assumed to be unitary. For a commutative algebra $R$, we denote by $(CAlg/R )$  the category of  
associative commutative $R$-algebras.
We consider also the category of not necessarily commutative $R$-algebras. To be more precise 
we denote by $(NCalg/R)$ the category of associative $R$-algebras $A$ such that (\,the image of\,)  $R$ is in the center of $A$. When there is no danger of confusion the category of commutative algebras is denoted simply by $(Alg/R)$ 
\end{definition}
Let us come back to the \qsi field extension $L/k$. We can now give the infinitesimal deformation functors in an appropriate language.  
\begin{definition}
%From now on, we denote 
%the category of commutative $L\n$-algebras by 
 %$(Calg/L\n)$ {\rm when we want to emphasize that 
 %we study only commutative algebras,} 
 % and the category of 
%non-commutative $L\n$-algebras   
%by $(NCAlg/L\n)$. 
The functor $\mathcal{F}\sb {L/k}$ defined in \ref{9.25f} will be denoted by 
$\mathcal{CF}\sb {L/k}$. So we have 
$$
\mathcal{CF}\sb{L/k}\colon (CAlg/L\n) \to (Set). 
 $$
 We extend formally  the functor $\mathcal{CF}\sb {L/k}$ in \ref{9.25f}
from the category $(CAlg /L\n)$ to the category 
$(NCAlg/L\n)$.
Namely, we define the functor 
\begin{equation*}
\NCF \colon \NCA \to (Set)
\end{equation*}
by setting   
\begin{multline*}
\NCF (A)  = \{ 
\varphi \colon \eL \rightarrow F(\N, \, A[[W]])[[X]]  
\, | \,
\text{\it $\varphi$ is an injective $q$-SI $\sigma$-differential }\\
\text{\it morphism compatible with the derivations } 
\text{\it $\partial/\partial W1, \, 
\partial /\partial W\sb 2, \, \cdots ,\, \partial/\partial W\sb d 
$.}
 \}
\end{multline*}
for 
$A \in Ob\, (NCAlg/L\n)$. 
So it follows from the definition that the restriction of the functor
$\mathcal{NCF}\sb {L/k}$
 to the sub-category 
$(CAlg/L\n) $ is $\mathcal{CF}\sb {L/k}$ so that 
$$
\mathcal{NCF}\sb {L/k} \, |\,  {(CAlg/L\n)} = \mathcal{CF}\sb{L/k}.
$$
\end{definition}
In the examples, we consider \qsi structure, differential structure and difference structure of a given field extension $L/k$ and we study Galois groups with respect to the structures. 
So we have to clarify which structure is     
in question. 
For this reason, when we treat \qsi structure, we sometimes 
add suffix $\sigma \, \theta $ to indicate 
that we treat the \qsi  structure 
as in \ref{9.25a}.
For exsmple 
$\mathcal{NCF} \sb {\sigma \theta ^* L/k}$.
\subsubsection{Definition of quantum Galois group}The definition of the group functor $\infgal (L/k)$ is similar.
 \begin{definition}
 The Galois group in our Galois  theory is the group functor 
$$
\infgal (L/k) \colon (Alg/L\n ) \to (Grp)
$$
 defined 
 by 
 \begin{multline*}
 \infgal (L/k) (A) =
  \{
  \, 
 f \colon \eL \hat{\otimes} \sb {L\s} A[[W]] \to 
 \eL \hat{\otimes} \sb {L\s} A[[W]]
 \, | \, \\
 f 
 \text{
  is a } 
  \K \otimes \sb {L\s}A[[W]]   
  \text{-automorphism} 
  \text{ compatible with 
  $\mathcal{D}$, } \\
\text{ continuous with respect to
  the $W$-adic topology} \\
  \text{ and congruent to the identity modulo nilpotent elements 
  }
    \}
 \end{multline*}
 for an $L\n$-algebra $A$. See Definition 2.19 in \cite{mori09}. 
\end{definition}
\par
Then the group functor $\infgal (L/k)$ operates on the functor 
$\mathcal{F}\sb {L/k}$ in such a way that 
the operation $(\infgal (L/k), \mathcal{F}\sb {L/k} )$ is a 
principal homogeneous space. 
\par %%%%%%%
%%%%%%%%%%%
%%%%%%%
%%%%%%%%%%
%%%%%%%%%%%%%%%%%%%45n
 %%%%%%%%%%%%%%%%%%%%%%%%%%%%%%%%%%%%%%%%%%%%%%%%%%%%%%%%%%%%%%%%%%%%%%%%%%%%%%%%%%%%%%%%%%%%
\section{The first example, the field extension $\com(t)/\com$}
\label{10.4a}
From now on, we assume $C =\com$. The arguments below work for an algebraic closed field $C$ of characteristic $0$. So $q$ is a non-zero complex number. 
\subsection{Analysis of the example}\label{10.10a}
Let $t$ be a variable over $\com$.  The field $\com(t) $ of rational functions has various structures: 
the differential field structure, the $q$-difference field structure and the \qsi structure that we are going to define. 
We are interested in the Galois group of the field extension $\com(t)/\com$ with respect to these structures. 
Let $\sigma \colon \com(t) \rightarrow \com(t)$ be the  $\com$-automorphism of the rational function field $\com(t)$ sending $t$ to $qt$. 
So $(\com(t), \,\sigma)$ is a difference field. We assume $q^{n}\neq 1 $ fore every positive integer $n$. We define a $\com$-linear map 
$\theta^{(1)}\colon \com(t) \rightarrow \com(t)$ by setting 
\[
 \theta^{(1)}(f(t)) := \frac{\sigma(f)-f}{\sigma(t) - t} = \frac{f(qt) -f(t)}{(q-1)t} \qquad \text{ for }f(t)\in \com(t).
\]
For an integer $n \geq 2$, we set
\[
 \theta^{(n)}:= \frac{1}{[n]\sb{q}!}\left(\theta^{(1)}\right)^{n}.
\]
It is convenient to define 
\[
 \theta^{(0)} = \Id\sb{\com(t)}. 
\]
It is well-known and easy to check that $(\com(t),\, \sigma, \, \theta^{*}) = (\com(t), \, \sigma, \, \{\theta^{(i)}\}\sb{i \in \N})$ is a \qsi algebra. 
\begin{lemma}\label{8.30a}
Galois group of the difference Picard-Vessiot extension $(\com(t), \, \sigma)/ (\com, \, \Id\sb{\com})$ is the multiplicative group $\G\sb{m\,\com}$. 
\end{lemma}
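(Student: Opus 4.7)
The plan is to identify $\com(t)/\com$ as the Picard-Vessiot extension associated with the scalar rank-one $q$-difference equation $\sigma(y) = qy$ and then to compute its Galois group directly.

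First I would verify the hypotheses of Picard-Vessiot theory. The element $t \in \com(t)$ is a nonzero solution of $\sigma(y) = qy$, and $t$ is a unit of $\com[t,t^{-1}]$. I would then show that the ring of $\sigma$-constants of $\com(t)$ equals $\com$: if $f(t) = p(t)/r(t) \in \com(t)$ satisfies $f(qt) = f(t)$, comparing the multiset of zeros and poles and using the assumption $q^n \neq 1$ for every positive integer $n$ forces $f$ to be constant. In particular the base field $(\com, \Id_\com)$ and the top field $(\com(t), \sigma)$ have the same field of $\sigma$-constants, namely $\com$.

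Next I would identify the Picard-Vessiot ring. Because $t$ is transcendental over $\com$ and invertible in $\com(t)$, the $\sigma$-stable subring $R := \com[t, t^{-1}] \subset \com(t)$ is generated by a fundamental solution matrix of the equation $\sigma(y) = qy$ (here simply the $1\times 1$ matrix $(t)$) together with the inverse of its determinant; by the computation of constants above $R$ is $\sigma$-simple with constants $\com$, so it is the Picard-Vessiot ring of the equation, and its total quotient ring is $\com(t)$.

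Finally I would describe the Galois group functor. For a commutative $\com$-algebra $A$, a $\sigma$-$A$-algebra automorphism $\varphi$ of $R \otimes_\com A = A[t,t^{-1}]$ must send $t$ to another nonzero solution of $\sigma(y) = qy$ in this ring; writing $\varphi(t) = g(t) \in A[t, t^{-1}]^\times$, the equation $\sigma(g(t)) = q\, g(t)$ combined with $\sigma(t) = qt$ forces $\varphi(t)/t$ to be a $\sigma$-constant unit, hence an element of $A^\times$. Conversely every $c \in A^\times$ defines such an automorphism by $t \mapsto ct$. Thus $\mathrm{Gal}(\com(t)/\com)(A) = A^\times$ functorially, which is exactly the group of $A$-points of $\G_{m,\com}$, proving the lemma.

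The only nontrivial step is the computation of the constants of $(\com(t), \sigma)$; everything else reduces to the fact that $t$ is transcendental over $\com$ and satisfies a rank-one linear $\sigma$-equation with invertible coefficient, which forces the Galois group to be a closed subgroup of $\mathrm{GL}_1 = \G_m$ containing every $c \in \G_m(A)$.
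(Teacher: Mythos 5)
Your proposal is correct and follows essentially the same route as the paper: observe that $t$ satisfies the rank-one equation $\sigma(t)=qt$, check that the $\sigma$-constants of $\com(t)$ are $\com$ (so the extension is Picard-Vessiot), and read off the Galois group from the definition. The paper leaves the last two steps as "follows from the definition," whereas you supply the details (the zero/pole orbit argument for the constants and the computation of automorphisms of $A[t,t^{-1}]$), all of which are sound.
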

\begin{proof}
 Since $t$ satisfies the linear difference equation $\sigma(t) = qt$ over $\com$ and the field $C\sb{\com(t)}$ of constant of $\com(t)$ is $\com$, the extension
 $(\com(t),\, \sigma )/ (\com, \, \Id\sb{\com})$ is a difference Picard-Vessiot extension. 
The result follows from the definition of the Galois group. \end{proof}  
When $q\rightarrow 1$, the limit of the \qsi ring $(\com(t), \, \sigma, \, \theta^{*})$ is the differential algebra $(\com(t), \,  d/dt)$. 
We denote by $AF\sb {1\, k}$, the algebraic group of affine transformations of the affine line so that 
\[
 AF\sb{1\, \com } = \left\{ \left.
   \begin{bmatrix}
   a & b \\
   0 & 1
   \end{bmatrix} \right| 
   a, \, b \in \com ,\,  a \neq 0
   \right\} . 
\] Then
\[
 AF\sb{1\, \com }\simeq \G\sb{m\, \com} \ltimes \G\sb{a\,\com}, 
\]
where 
\begin{align*}
 \G\sb{m\,\com} &\simeq \left\{ \left.
 \begin{bmatrix}
 a & 0\\
 0 & 1
 \end{bmatrix} \in AF\sb{1\, \com } \right| a\in \com ^{*} \right\}, \vspace{1ex}\\
 \G\sb{a\,\com} &\simeq \left\{ \left.
 \begin{bmatrix}
 1 & b\\
 0 & 1
 \end{bmatrix} \in AF\sb{1\,\com} \right| b\in \com  \right\}.
\end{align*}
\begin{lemma}\label{8.30b}
The Galois group of differential Picard-Vessiot 
extension $(\com(t), \, d/dt)/\com$ is $\G\sb{a\,\com}$.
\end{lemma}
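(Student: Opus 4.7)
The plan is to exhibit $(\com(t), d/dt)/\com$ as a differential Picard--Vessiot extension and then compute the group of $\mathcal{D}$-automorphisms directly.

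First I would note that $t$ satisfies the homogeneous linear differential equation $y'' = 0$ over $\com$: indeed $t' = 1$ and so $t'' = 0$. The solution space $V = \com \cdot 1 \oplus \com \cdot t$ inside $\com(t)$ has dimension $2$, matching the order of the equation. The field of constants of $(\com(t), d/dt)$ is $\com$ (standard computation, since any element $f \in \com(t)$ with $f' = 0$ must be a constant). Consequently $\com(t)$ is generated over $\com$ by a full fundamental system of solutions of $y'' = 0$, so $(\com(t), d/dt)/(\com, d/dt)$ is a differential Picard--Vessiot extension. Equivalently, one can view $t$ as a solution of the inhomogeneous rank-$1$ equation $y' = 1$, which realizes $\com(t)/\com$ as a $\G_{a\,\com}$-torsor.

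Next I would compute the Galois group functorially. Let $A$ be a commutative $\com$-algebra and let $\tau$ be a differential $A$-algebra automorphism of $\com(t) \otimes_{\com} A$ fixing $\com \otimes_{\com} A$. Because $\tau$ commutes with $d/dt$ and fixes $A$, we have
\[
(\tau(t))' = \tau(t') = \tau(1) = 1,
\]
so $\tau(t) - t$ is annihilated by $d/dt$, hence lies in the ring of constants of $\com(t) \otimes_\com A$, which is $A$. Thus $\tau(t) = t + c$ for some $c \in A$. Conversely, for any $c \in A$ the assignment $t \mapsto t + c$ extends uniquely to a differential $A$-automorphism of $\com(t) \otimes_\com A$, since $\com(t) \otimes_\com A = A(t)$ as an $A$-algebra and $t + c$ is transcendental over $A$. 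The composition law corresponds to addition of the parameters $c$, so the functor of points of the Galois group is canonically isomorphic to that of $\G_{a\,\com}$.

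The substantive point, and the only place the argument could go wrong, is the identification of $\tau(t) - t$ with an element of $A$: one must know that $A$ is exactly the constants of the tensor product $\com(t) \otimes_\com A$, which is why I include the constant-field computation in the first paragraph. Once that and the PV property are in hand, the rest is a short functor-of-points calculation yielding $\infgal = \G_{a\,\com}$.
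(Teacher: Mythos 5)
Your proof is correct and takes essentially the same route as the paper, which writes the same second--order equation $y''=0$ in matrix form $Y' = \left[\begin{smallmatrix}0&1\\0&0\end{smallmatrix}\right]Y$ with fundamental matrix $\left[\begin{smallmatrix}1&t\\0&1\end{smallmatrix}\right]$ and then simply declares the computation of the group (every differential automorphism sends $t\mapsto t+c$ with $c$ a constant) to be well known; you have merely filled in that computation. One small imprecision: $\com(t)\otimes_{\com}A$ is only a localization of $A[t]$, not $A(t)$, and $p(t+c)$ need not be invertible there for arbitrary $c\in A$, so the functor of points should be computed on the Picard--Vessiot ring $\com[t]\otimes_{\com}A$ (or for infinitesimal $c$), which is the standard convention and does not change the conclusion $\G_{a\,\com}$.
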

\begin{proof}
We consider the linear differential equation
\begin{equation}\label{8.10a}
Y^\prime = \left[ \begin{array}{cc}
0 & 1 \\
0 & 0
\end{array} \right] Y,
\end{equation}
where $Y$ is a $2 \times 2$-matrix with entries in a differential extension field of $\com$. 
Then $\com(t)/\com $ is the Picard-vessiot extension for \eqref{8.10a}, 
$$
Y =  \left[ \begin{array}{cc}
1 & t \\
0 & 1
\end{array} \right]
$$
being a fundamental solution of \eqref{8.10a}. 
The resalt is well-known and follows from, the definition of Galois group. 
\end{proof}
The extension $(\com(t), \, \sigma, \, \theta^{*})/\com$ is not a Picard-Vessiot extension so that we can not treat it in the framework of Picard-Vessiot theory. We can apply, however, Hopf Galois theory of Heiderich \cite{hei10}. 
\begin{proposition}\label{5.14a}
The Galois group of the extension $(\com(t),\,\sigma, \,  \theta^{*})/\com$ is isomorphic to the formal completion $\hat{\G}\sb{m\,\com}$ of the multiplicative group $\G\sb{m\,\com}$
\end{proposition}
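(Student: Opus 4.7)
The plan is to compute $\infgal(L/k)$ by first making the Galois hull $\eL$ explicit, then analyzing the constraints that a $\mathcal{D}$-automorphism must satisfy, and finally exploiting the non-commutative relation $eX = qXe$ to reduce the naive two-parameter family down to a single parameter.

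First I would compute the universal Hopf morphism on $t$. A direct calculation gives $\theta^{(1)}(t) = 1$ and $\theta^{(n)}(t) = 0$ for $n \ge 2$, so by Proposition \ref{a4.1},
\[
\iota(t) = u[t] + X,
\]
where $u[t](n) = q^n t$. Writing $t\s$ for the constant function with value $t$ and $e \in F(\N, L\s)$ for the function $n \mapsto q^n$, we have $u[t] = t\s e$, hence $\iota(t) = t\s e + X$. Since $k = \com$ carries the trivial \qsi structure, $\K = L\s = \com(t)$ and the only relevant derivation is $D\sb 1 = d/dt$. Applying $D\sb 1$ to $\iota(t)$ places $e$ in $\eL$, hence $X = \iota(t) - t\s e \in \eL$; using $\iota(1/t) = \iota(t)^{-1}$ together with $D\sb 1$, one gets $e^{-1} \in \eL$. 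In particular the $X^0$-part of $\eL$ is $L\s[e, e^{-1}]$, on which $\hat{\Sigma}(e) = qe$ and $D\sb 1(e) = 0$.

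Next I would analyze $\varphi \in \infgal(L/k)(A)$ for a commutative $L\n$-algebra $A$. Writing $\varphi(X) = \sum\sb{i \ge 0} X^i b\sb i$, the identity $\hat{\Theta}^{(1)}(\varphi(X)) = \varphi(\hat{\Theta}^{(1)}(X)) = \varphi(1) = 1$ forces $b\sb 1 = 1$ and $b\sb i = 0$ for $i \ge 2$, so $\varphi(X) = b\sb 0 + X$. Compatibility with $\hat{\Sigma}$, i.e.\ $\hat{\Sigma}(b\sb 0) = q b\sb 0$, combined with the fact that the $X^0$-part of $\eL \hat{\otimes}\sb{L\s} A[[W]]$ is $A[[W]][e, e^{-1}]$, forces $b\sb 0 = \gamma e$ for some $\gamma \in A[[W]]$. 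Similarly $\varphi(e) = \alpha e$ for some $\alpha \in A[[W]]$. The derivation $D\sb 1$ (which on the $A[[W]]$-side is $\partial/\partial W$ and kills both $e$ and $X$) then gives $D\sb 1(\varphi(e)) = D\sb 1(\varphi(X)) = 0$, so $\alpha, \gamma \in A$, and the congruence $\varphi \equiv \mathrm{id}$ modulo nilpotents yields $\alpha \equiv 1$ and $\gamma \equiv 0$ modulo $\mathrm{nil}(A)$.

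The hard part will be showing that $\gamma$ is in fact forced to vanish; this is the step at which the non-co-commutative quantum structure plays its role. The point is that $\varphi$ is a ring homomorphism and must respect the identity $\iota(t^2) = \iota(t)^2$. Using $eX = qXe$,
\[
\iota(t)^2 \;=\; (t\s)^2 e^2 + [2]\sb q t\s Xe + X^2 \;=\; \iota(t^2).
\]
Setting $\beta = (t+W)\alpha + \gamma$, a direct expansion yields $\varphi(\iota(t))^2 = \beta^2 e^2 + [2]\sb q \beta Xe + X^2$, whose $e^2$-coefficient equals $(t+W)^2 \alpha^2 + 2(t+W)\alpha\gamma + \gamma^2$, whereas the $e^2$-coefficient of $\varphi(\iota(t^2))$ equals $(t+W)^2 \alpha^2 + [2]\sb q (t+W)\alpha\gamma + \gamma^2$. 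Their equality forces $(q-1)(t+W)\alpha\gamma = 0$; since $q \ne 1$ and $(t+W)\alpha$ is a unit in $A[[W]]$, we conclude $\gamma = 0$. Therefore $\varphi$ is determined by $\alpha \in 1 + \mathrm{nil}(A)$ with $\varphi(e) = \alpha e$ and $\varphi(X) = X$; since composition corresponds to multiplication of the $\alpha$'s, this identifies $\infgal(L/k)$ with $\hat{\G}\sb{m\,\com}$. Naively the pair $(\alpha, \gamma)$ would produce the formal completion of $AF\sb{1\,\com} = \G\sb{m\,\com} \ltimes \G\sb{a\,\com}$, combining the $\G\sb m$ of Lemma \ref{8.30a} with the $\G\sb a$ of Lemma \ref{8.30b}; the $q$-twist $eX = qXe$ is precisely what quantizes this down to the one-parameter $\hat{\G}\sb{m\,\com}$.
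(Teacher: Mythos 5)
Your proposal is correct and follows essentially the same route as the paper: identify the Galois hull as $L\n\langle X,\,Q\rangle\sb{alg}$ with $QX=qXQ$ (your $e$ is the paper's $Q$), use compatibility with $\hat{\Sigma}$, $\hat{\Theta}^{*}$ and $\partial/\partial W$ to reduce a deformation to $\varphi(Q)=\alpha Q$, $\varphi(X)=X+\gamma Q$ with $\alpha-1,\gamma$ nilpotent, and then kill $\gamma$ by playing the $q$-commutation against the commutativity of $A$. The only cosmetic difference is in that last step: the paper applies $\varphi$ directly to the relation $QX=qXQ$, getting $\alpha\gamma=q\gamma\alpha$ and hence $(1-q)\alpha\gamma=0$ with $\alpha$ a unit, whereas you extract the equivalent constraint $(q-1)(t+W)\alpha\gamma=0$ from $\varphi(\iota(t)^2)=\varphi(\iota(t^2))$, which is a derived consequence of the same commutation relation.
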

Theory of Umemura \cite{ume96.2} and Heiderich\cite{hei10} single out only the Lie algebra. 
Proposition \ref{5.14a} should be understood in the following manner. 
We have two specializations of the \qsi field extension $(\com(t),\,  \sigma, \, \theta^{\ast})/\com$. 
\begin{enumerate}
\renewcommand{\labelenumi}{(\roman{enumi})}
\item $q \rightarrow 1$ giving the differential field extension $(\com(t), \, d/dt)/\com$. 
See \ref{9.19h}. 
\item  Forgetting $\theta ^*$, 
or equivalently specializing 
$$
\theta ^{(i)} \to 0 \qquad \text{\it for } 
i \ge 1,
$$
  we get   the difference field extension $(\com(t),\, \sigma)/\com$. See \ref{9.19c}.
\end{enumerate}
We can summarise the behavior of the 
Galois group under the specializations.
\begin{enumerate}
\renewcommand{\labelenumi}{(\arabic{enumi})}
\item Proposition \ref{5.14a} says that the Galois group of $(\com(t),\, \sigma, \, \theta^{*})/\com$ is the multiplicative group $\G\sb{m\,\com}$. This describes the 
Galois group at the  generic pooint. 
\item By Lemma \ref{8.30a}, Galois group of the specialization (i) is the multiplicative group $\G\sb{m\,\com}$.
\item Galois group of the specialization (ii) is the additive group $\G\sb{a\,\com}$ by Lemma \ref{8.30b}.
\end{enumerate}
\begin{proof}
Let us set $L = (\com(t),\, \sigma, \, \theta^{*})$ and $k = (\com, \, \sigma, \, \theta^{*})$. 
By definition of the universal Hopf  morphism 
\eqref{9.19a}, 
\[
 \iota \colon (L, \, \sigma , \, \,\,  \theta^* ) 
 \rightarrow 
 \left( F(\N,L\n)[[X]],\, \hat{\Sigma}, \, \hat{\Theta}^* \right),  \qquad \iota(t) = tQ + X \in F(\N,L\n)[[X]], 
, \]
where 
\[
 Q \in F(\N, L\n)
\]
is a function on $\N$ taking values in $\com \subset L\n$ such that 
\[
 Q(n) = q^{n} \qquad \text{ for } n \in \N.
\]
We denote the function Q by 
\[
 Q = \begin{bmatrix}
      0 & 1 & 2 & \cdots \\
      1 & q & q^{2} & \cdots
     \end{bmatrix}
\]
according to the convention in \cite{mori09}.
We take the derivation $\partial/\partial t \in \text{Der}(L\n/k\n)$ as a basis of the 1-dimensional $L\n$-vector space $\text{Der}(L\n/k\n)$ of $k\n$-derivations of $L\n$.
So $(\partial/\partial t) (\iota(t)) = Q$ is an element of the Galois hull $\eL$. Therefore 
\[
 \eL = L\n<X,\, Q>\sb{alg},  
\]
which is the $L\n$-sub-algebra of 
$F(\N, \, L\s )[[X]]$ generated by $X$ and $Q$. 
Since $QX = q XQ$, the Galois hull $\eL$ is a non-commutative $L\n$-algebra. 
Now we consider the universal Taylor expansion 
\[
 (L\n, \partial/\partial t) \rightarrow L\n[[W]]
\]
and consequently we have 
\begin{equation}\label{10.3d}
 \iota \colon \eL \rightarrow F(\N, L\n)[[X]] \rightarrow F(\N,L\n[[W]])[[X]]. 
\end{equation}
We study infinitesimal deformation of $\iota$ in \eqref{10.3d}. 
Let A be a commutative $L\n$-algebra
\[
 \varphi \colon \eL \rightarrow F(\N,\, A[[W]])[[X]]
\]
be an infinitesimal deformation of 
$$
\iota :\eL \to F(\N , \, A[[W]])[[X]],  
$$
for a commutative $L\n$-algebra $A$ so that 
$$
\varphi \in \mathcal{F} \sb {L/k}(A).
$$

\begin{sublemma}\label{5.15}
(1) There exists a nilpotent element $n \in A$ such that $\varphi(Q) = (1+ n)Q$.
(2) $\varphi(X) = X$. 
\end{sublemma}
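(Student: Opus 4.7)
The plan is to exploit, in succession, each compatibility forced on $\varphi$ by membership in $\mathcal{F}_{L/k}(A)$: the $\hat{\Theta}^{(1)}$-compatibility to collapse the $X$-expansions of $\varphi(Q)$ and $\varphi(X)$, the $\hat{\Sigma}$-compatibility to recognize the surviving coefficients as scalar multiples of the function $Q$, the $\partial/\partial W$-compatibility to move these scalars into the base ring $A$, the nilpotent-deformation hypothesis to pin them near their trivial values, and finally the twisted commutation $QX = qXQ$ inside $\eL$ to kill the one remaining degree of freedom.

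For (1), I would write $\varphi(Q) = \sum_{i \ge 0} X^i f_i$ with $f_i \in F(\N, A[[W]])$. Since $Q$ is purely the $X^0$-term in $\eL$, formula \eqref{a4.5} gives $\hat{\Theta}^{(1)}(Q) = 0$, and applying this identity to $\varphi(Q)$ yields $[1+i]_q\, f_{i+1} = 0$ for every $i\ge 0$. As $q$ is not a root of unity, $[1+i]_q \ne 0$, so $f_i = 0$ for $i \ge 1$ and $\varphi(Q) = f_0$. Next, from $\hat{\Sigma}(Q) = qQ$ I would derive $\Sigma(f_0) = q f_0$, that is $f_0(n) = q^n f_0(0)$, so $f_0 = c \cdot Q$ with $c := f_0(0) \in A[[W]]$. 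Because $Q$ is constant in $W$, the $\partial/\partial W$-compatibility gives $\partial c/\partial W = 0$, hence $c \in A$; the congruence $\varphi \equiv \iota$ modulo nilpotents then writes $c = 1 + n$ with $n \in A$ nilpotent.

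For (2), exactly the same recipe applied to $\varphi(X) = \sum X^i g_i$, now using $\hat{\Theta}^{(1)}(X) = 1$, produces $g_1 = 1$, $g_i = 0$ for $i \ge 2$, and $g_0 = c' Q$ with $c' \in A$ nilpotent, so $\varphi(X) = c'Q + X$. The main obstacle, and the only place where the non-commutative nature of $\eL$ is genuinely used, is eliminating $c'$. For this I would apply $\varphi$ to the defining relation $QX = qXQ$. In the target twisted ring the element $1+n \in A$ is $\Sigma$-invariant and hence commutes with $X$, and the relation $QX = qXQ$ still holds; matching the $X^0$-coefficients of $\varphi(Q)\varphi(X) = q\,\varphi(X)\varphi(Q)$ collapses to $(1-q)(1+n)\,c'\,Q^2 = 0$. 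Since $q \ne 1$, $1+n$ is a unit in $A$, and $Q$ is a unit in $F(\N, A[[W]])$, this forces $c' = 0$, giving $\varphi(X) = X$.
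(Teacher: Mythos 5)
Your proposal is correct and follows essentially the same route as the paper: both derive the forms $\varphi(Q)=eQ$ and $\varphi(X)=X+fQ$ with $e-1,f\in A$ nilpotent from the defining relations ($\hat{\Theta}^{(i)}$-, $\hat{\Sigma}$-, and $\partial/\partial W$-compatibility plus the congruence modulo nilpotents), and then kill $f$ by applying $\varphi$ to $QX=qXQ$ and using that $1-q\neq 0$ and $e$, $Q$ are units. The only difference is that you spell out the coefficient-by-coefficient extraction in the $X$-expansion, which the paper leaves implicit.
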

Sublemma proves Proposition\ref{5.14a}. 
\end{proof}
\begin{proof}[Proof of Sublemma.] The elements 
$X, Q \in \eL$ satisfy the following equation. 
\begin{align*}
 &\frac{\partial X}{\partial W} = \frac{\partial Q}{\partial W} = 0,\\
 &\hat{\Sigma}(X) = qX,\qquad\hat{\Sigma}(Q) = qQ, \\
 &\hat{\Theta}^{(1)}(X) = 1, \qquad \hat{\Theta}^{(i)}(X) = 0 \qquad \text{ for } i\geq 2,\\
 &\hat{\Theta}^{(i)}(Q) = 0 \,\,\text{ for } i \geq 1. 
\end{align*}
So $\varphi(X), \, \varphi(Q)$ satisfy the same equations as above, which shows %[us $X$ and $Q$ ******* with respect to these operators. ]%where?
\begin{align*}
\varphi(X) &= X + fQ \in F(\N,A[[W]])[[X]],\\
\varphi(Q) &= eQ \in F(\N,A[[W]])[[X]],\\ 
\end{align*}
where $f,e \in A$. Since $\varphi$ is an infinitesimal deformation of $\iota$, $f$ and $1-e$ are nilpotent elements in $A$. 
It remains to show $f = 0$. In fact, it follows from the equation
\[
 QX = qXQ
\]
that 
\[
 \varphi(Q)\varphi(X) = q\varphi(X)\varphi(Q)
\]
or
\[
 eQ(X + fQ) = q(X + fQ)eQ.
\]
So we have 
\[
 eQfQ = qfQeQ
\]
and so 
\[
 efQ^{2} = qfeQ^{2}.
\]
Therefore
\[
 ef = qfe.
\]
Since $e$ is a unit, $e-1$  being nilpotent in $A$,
\[
f - qf = 0,
\]
so that 
\[
(1-q)f = 0.
\]
As $1-q$ is a non-zero complex number, $f = 0$ as we wanted. 
\end{proof}
We have shown that the functor
\[
\mathcal{F}\sb{L/k} \colon (Alg/L\n) \rightarrow (Set)
\]
is a principal homogeneous space of the group functor $\hat{G}\sb{m\,\com}$. 
See paragraph \ref{9.28a} Origin of the group structure as well as paragaraph \ref{10.5a} below. 
%\begin{definition}
%For a commutative algebra $R$, we denote by $(NCAlg/R)$ the category of not necessarily commutative $R$-algebras $A$ such that the image of $R$ is in the center of $A$. %for an algebra R. 
%To avoid confusions, we denote the category $(Alg/R)$ of commutative $R$-algebras by $(CAlg/R)$ and the infinitesimal deformation functor $\mathcal{F}\sb{L/k}$ by $C\mathcal{F}\sb{L/k}$. 
%\end{definition}
%%%%%%%%%%%%%%%%%%%
%We investigate non-commutative infinitesimal deformations of the Galois hull $\eL/\K$. So we define a functor 
%\[
 %\NCF \colon (NCAlg) \rightarrow (Set)
%\]
%by 
%\begin{align*}
% \NCF (A) = \left\{ \! \begin{array}{cc}
%\varphi \colon \eL \rightarrow F(\N, A[[W]])[[X]]  & 
%\left| \begin{array}{l} 
%\text{\it $\varphi$ is a \qsi }\\
%\text{\it algebra morphism compatible with} \\ 
%\text{\it the derivation $\partial/\partial W$ and $\varphi|\sb{\K} = \iota|\sb{\K}$.}
%\end{array} \right. 
%\end{array} \! \right\}.
%\end{align*}
\par
During the proof of Proposition \ref{5.14a}, we
have proved the following 
\begin{proposition}\label{10.17a}
The Galois hull $\eL$ coincides with the sub-algebra 
$$
L\n< X, \, Q>\sb {alg}
$$
of $F(\N, \, L\n)[[X]]$ generated by $L\n , \, X$ and $Q$. The commutation relation of $X$ and $Q$ is 
$$
QX=qXQ. 
$$ 
In particular, if $q\not= 1$, then the Galois hull is non-commutative.
\end{proposition}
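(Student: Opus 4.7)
The strategy is to identify explicit generators of $\eL$ by tracking how the universal Hopf morphism acts on the single generator $t$ of $L$ over $L\s$, and then to read off the commutation relation directly from the twisted multiplication rule of the ring $F(\N, L\s)[[X]]$ recalled in \ref{10.3a}.

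First, I would compute $\iota(t)$. A direct check from the defining formulas of the \qsi structure on $\com(t)$ gives $\theta^{(0)}(t) = t$, $\theta^{(1)}(t) = 1$ and $\theta^{(i)}(t) = 0$ for $i \ge 2$. Substituting into the formula of Proposition \ref{a4.1} yields
$$
\iota(t) \;=\; u[t] \;+\; X\cdot u[1] \;=\; tQ \;+\; X ,
$$
where $Q \in F(\N, L\n)$ is the function $n \mapsto q^n$ already introduced in the proof of Proposition \ref{5.14a}. In particular $\eL$ contains $\iota(t) = tQ + X$ and, by definition, all of $L\s$.

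Second, I would extract $Q$ and $X$ separately. The derivation $D\sb 1 = \partial/\partial t$ belongs to $\D$ and acts only on the coefficient ring $L\s$, killing both $X$ and $Q$ (the latter because $Q$ takes scalar values in $\com \subset L\s$). Hence $D\sb 1(\iota(t)) = Q \in \eL$, and consequently $X = \iota(t) - tQ \in \eL$. This proves $L\n\langle X, Q\rangle\sb{alg} \subseteq \eL$. For the reverse inclusion I would verify that $L\n\langle X, Q\rangle\sb{alg}$ is already stable under every operator of $\D$; by \eqref{a5.1} and \eqref{a4.5} this reduces to the identities $\hat{\Sigma}(X) = qX$, $\hat{\Sigma}(Q) = qQ$, $\hat{\Theta}^{(1)}(X) = 1$, $\hat{\Theta}^{(i)}(X) = 0$ for $i \ge 2$, and $\hat{\Theta}^{(i)}(Q) = 0$ for $i \ge 1$, all of which were tabulated in the proof of Proposition \ref{5.14a}. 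Closure under the derivations $D\sb 1, \ldots , D\sb d$ (here only $D\sb 1$) is automatic since these act trivially on $X$ and $Q$.

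Finally, the commutation relation follows directly from the twisting $aX = X\Sigma(a)$ of the ring $F(\N, L\s)[[X]]$ applied to $a = Q$: the computation $\Sigma(Q)(n) = Q(n+1) = q^{n+1} = qQ(n)$ gives $\Sigma(Q) = qQ$ and hence $QX = X\cdot qQ = qXQ$. When $q \ne 1$, this is a non-trivial relation, so $\eL$ is non-commutative. The only subtlety to watch is keeping the direction of the twist correct; once that is settled, the proposition follows from calculations already implicit in the proof of Proposition \ref{5.14a}.
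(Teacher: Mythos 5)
Your proposal is correct and follows essentially the same route as the paper, which establishes this proposition inside the proof of Proposition \ref{5.14a}: compute $\iota(t)=tQ+X$, apply $\partial/\partial t$ to extract $Q$ (hence $X$), and read off $QX=qXQ$ from the twist $aX=X\Sigma(a)$. Your explicit verification that $L\n\langle X,Q\rangle\sb{alg}$ is stable under all the operators of $\D$ is a point the paper leaves implicit, so the write-up is, if anything, slightly more complete than the original.
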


We are ready to describe the non-commutative deformations.
\begin{lemma}\label{8.30c}
If $q \neq 1$,  we have 
\[
 \NCF (A)= \left\{ \,  
 e \in A,\,  f \in A,\, | \,  fe=qef \text{ and } 1-e,\,  f \text{ are nilpotent \,}
 \right\}
 \]
 for every $A\in  Ob (NCAlg/L\n)$.
\end{lemma}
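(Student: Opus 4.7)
By Proposition~\ref{10.17a} the Galois hull $\eL$ is the $L\n$-algebra generated by $X$ and $Q$ subject only to the relation $QX = qXQ$, and these generators satisfy the list of $\mathcal{D}$-equations displayed at the start of the proof of Sublemma~\ref{5.15}. Hence any $\varphi \in \NCF(A)$ is determined by the pair $(\varphi(X), \varphi(Q))$, so the lemma splits into (a) parametrising the admissible pairs and (b) checking that every such pair extends to a genuine $\mathcal{D}$-morphism on $\eL$.

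\textbf{Shape of $\varphi(X)$ and $\varphi(Q)$.} For (a) the plan is to repeat the coefficient analysis of Sublemma~\ref{5.15} verbatim, since every step there is linear in the coefficients and thus indifferent to whether $A$ is commutative. Writing $\varphi(X) = \sum_{i \geq 0} X^{i} g_{i}$ with $g_{i} \in F(\N, A[[W]])$, the conditions $\hat{\Theta}^{(1)}(\varphi(X)) = 1$ and $\hat{\Theta}^{(i)}(\varphi(X)) = 0$ for $i \geq 2$ pin down $g_{1} = 1$ and $g_{i} = 0$ for $i \geq 2$; vanishing of each $\partial/\partial W_{j}$ places $g_{0}$ in $F(\N, A)$; and $\hat{\Sigma}(g_{0}) = q g_{0}$, i.e.\ the recursion $g_{0}(n+1) = q g_{0}(n)$, has general solution $g_{0} = fQ$ for a unique $f := g_{0}(0) \in A$. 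The same argument applied to $\varphi(Q)$ yields $\varphi(Q) = eQ$ with $e \in A$ unique. The congruence-modulo-nilpotents clause then reads exactly as nilpotency of $f$ and of $e-1$.

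\textbf{The $q$-commutation relation and the converse.} Elements of $A$, viewed as constant functions in $F(\N, A[[W]])$, commute with both $X$ and $Q$ in the twisted power series ring, because $\Sigma$ acts trivially on constants and $Q$ takes its values $q^{n}$ in the centre $L\n$ of $A$. Applying $\varphi$ to the defining relation $QX = qXQ$ and using these commutations reduces, exactly as in the Sublemma, to a single identity in $A \cdot Q^{2}$; after cancelling $Q^{2}$ one obtains precisely the $q$-commutation relation of the statement. Conversely, given any pair $(e, f)$ satisfying the stated nilpotency and $q$-commutation, the assignments $\varphi(X) := X + fQ$ and $\varphi(Q) := eQ$ define a morphism on the free algebra $L\n\langle X, Q\rangle$ that descends to $\eL$ because the only relation $QX = qXQ$ to verify is the one we have just used; the $\mathcal{D}$-equations hold by construction.

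\textbf{Expected main obstacle.} The only genuinely delicate step is the reduction of $g_{0}$, and its analogue for $\varphi(Q)$, to a single $A$-valued parameter: one must check that the eigenequation $\hat\Sigma g = q g$ in $F(\N, A)$ admits no hidden $A[[W]]$-valued freedom beyond the rank-one family $g = f Q$. This is immediate from the pointwise recursion, but it is the place where one needs to take care that the non-commutativity of $A$ introduces no additional solutions; once this is settled the remainder of the argument is mechanical.
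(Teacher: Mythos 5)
Your argument is correct and follows essentially the same route as the paper: you reuse the $\mathcal{D}$-equations from the proof of Sublemma~\ref{5.15} to force $\varphi(X)=X+fQ$, $\varphi(Q)=eQ$ with $f$, $e-1$ nilpotent, and then extract the $q$-commutation of $e$ and $f$ from $QX=qXQ$ exactly as the paper does (note the paper itself wavers between $ef=qfe$ and $fe=qef$ here). The one genuine addition is your converse step showing every admissible pair $(e,f)$ actually arises, which the paper silently omits although the lemma asserts an equality; that step implicitly uses that $\eL$ is presented by the single relation $QX=qXQ$ (i.e.\ that the monomials $X^iQ^j$ are $L\n$-linearly independent, which holds since $q$ is not a root of unity), a point worth stating explicitly but not a flaw.
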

\begin{proof}
Since $q \not= 1$,  
it follows from the argument of the proof of Sublemma \ref{5.15} that if we take 
$$
\varphi \in 
\mathcal{NCF}\sb {L/k\sigma \theta^*} \qquad \text{\it for } A \in
Ob \, \NCA , 
$$ 
then $\varphi(X) = X + f$ and $\varphi(Q) = eQ$, $f,e \in A$. \par
Since $\varphi$ is an infinitesimal deformation of $\iota$, $f$ and $1-e$ are nilpotent. \par
It follows from $QX = qXQ$ that 
\[
eQ(X+f) = q(X + f)eQ
\]
so $ef = qfe$. 
\end{proof}
We are going to see in \ref{10.5a} that
theoretically, 
 we can identify 
%the set $\mathcal{NCF}\sb {L/k}$ with the set  
\begin{equation}
 \NCF(A) = \left\{ \begin{array}{cc}
 \left.  \begin{bmatrix}
 e & f\\
 0 & 1
 \end{bmatrix} \right| 
 e \in A, f \in A, fe=qef \text{ and } 1-e, f \text{ are nilpotent}
 \end{array}
 \right\} . 
\end{equation}
%%%%%%%%%%%%%%%%%%%%%%%%%%%%%%%%%%%%%%%%%%%%%%%%%%%%%
\begin{cor}
[Corollary to the proof of Lemma \ref{8.30c}]
When $q = 1$ that is the case excluded in our general study, we consider the \qsi differential field 
$$
(\com(t),\, \Id , \, \theta^{\ast})
$$ 
as in \ref{9.19b}. So $\theta^{\ast}$ is the iterative derivation; 
\begin{align*}
\theta^{(0)} &= \Id,  \\
 \theta^{(i)} &= \dfrac{1}{i!}\dfrac{d^i}{dt^i}\qquad \text{for } i\geq 1 .
\end{align*}
%%%%%%%%%%%%%%%%%%
Then we have 
\begin{align}
 & \eL\sb{\sigma  \, \theta ^*} \simeq \eL\sb{d/dt}, 
 \label{9.26a} \\ 
& \mathcal{NCF}
\sb{(( \com(t),\, \Id, \,\theta^{\ast})/\com)
}(A)
= \{ f \in A \, | \, f \text{\it \/ is a nilpotent element }
\}  \label{9.26b}
\end{align}
 for $ A \in  Ob \NCA$. 
\end{cor}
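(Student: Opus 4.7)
The plan is to mirror the proof of Proposition~\ref{5.14a} and Lemma~\ref{8.30c} in the degenerate regime $q=1$, $\sigma=\mathrm{Id}_{\com(t)}$, and to observe how the element $Q$ of that proof collapses.

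First I would compute the universal Hopf morphism \eqref{9.19a}. Because $\sigma=\mathrm{Id}$, the associated function $u[a]\in F(\N,L\n)$ is the constant function with value $a$, and the image of $\iota$ consists of constant-valued twisted power series. Since $\theta^{(0)}(t)=t$, $\theta^{(1)}(t)=1$ and $\theta^{(i)}(t)=0$ for $i\ge 2$, the image of $t$ is simply
\[
 \iota(t) = t + X.
\]
The element called $Q$ in the proof of Proposition~\ref{5.14a} becomes the constant function $1$, and the commutation $aX=X\sigma(a)$ together with the twisted shift $\hat{\Sigma}$ trivialise on constant functions. Taking $\partial/\partial t$ as a basis of $\mathrm{Der}(L\n/\com)$ and computing $(\partial/\partial t)(\iota(t))=1$, the $\mathcal{D}$-invariant sub-algebra generated by $\iota(L)$ and $L\s$ is the ordinary commutative polynomial ring $\eL\sb{\sigma\,\theta^*}=L\n[X]$. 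The universal Taylor morphism for $(\com(t),d/dt)$ sends $t$ to the same power series $t+X$, so $\eL\sb{d/dt}=L\n[X]$ as well, giving the isomorphism \eqref{9.26a}.

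Next, for $A\in Ob\,\NCA$ and $\varphi\in\NCF(A)$, I would note that $\varphi$ is determined by $\varphi(X)$ since $\eL$ is generated over $L\s$ by $X$. Writing $\varphi(X)=\sum_{i\ge 0}X^i g_i$ with $g_i\in F(\N,A[[W]])$, compatibility with $\hat{\Sigma}$ (trivial when $q=1$) forces each $g_i$ to be constant-valued, hence in $A[[W]]$; compatibility with $\hat{\Theta}^{(1)}$, using $[i+1]_1=i+1$ and characteristic zero, forces $g_1=1$ and $g_i=0$ for $i\ge 2$; compatibility with $\partial/\partial W$ forces $g_0\in A$; and the infinitesimal deformation condition forces $g_0$ to be nilpotent. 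Conversely every nilpotent $f\in A$ gives a well-defined $\mathcal{D}$-morphism $\varphi(X)=X+f$, yielding \eqref{9.26b}.

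The main subtlety, and the only real content beyond routine specialisation, is to see why the nilpotent $f$ is no longer constrained. In the proof of Sublemma~\ref{5.15} the non-commutation $QX=qXQ$ produced $(1-q)f=0$ and hence $f=0$; here the generator $Q$ itself collapses to the unit $1$, the relation becomes tautological, and the parameter $f$ survives unconstrained. I would double-check that none of the iterative relations $\hat{\Theta}^{(i)}\circ\hat{\Theta}^{(j)}=\binom{i+j}{i}\sb q\hat{\Theta}^{(i+j)}$ resurrect a hidden constraint on $f$ through higher-order coefficients of $\varphi(X)$, which is straightforward once one verifies that they are consequences of the relation already imposed by $\hat{\Theta}^{(1)}$ in characteristic zero.
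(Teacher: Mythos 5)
Your proposal is correct and follows essentially the same route as the paper: observe that for $q=1$ the function $Q$ collapses to the constant $1\in\K$, so the Galois hull is generated by $X$ over $\K$ and agrees with the differential hull $\eL\sb{d/dt}$, and the relation $QX=qXQ$ that previously forced $(1-q)f=0$ becomes vacuous, leaving $\varphi(X)=X+f$ with $f$ an arbitrary nilpotent element. The paper's own proof is just a terser version of this; your extra checks (the $\hat{\Theta}^{(i)}$ iterativity imposing nothing new in characteristic zero, and the localization needed to invert $t+X$) are consistent with it.
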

\begin{proof}
In fact if $q = 1$, then 
\begin{equation*}
\left[\begin{array}{ccccc}
0 & 1 & 2 & 3 & \cdots \\
1 & 1 & 1 & 1 & \cdots
\end{array} \right] = 1 \in \com.
\end{equation*}
So $\eL\sb{q,\sigma}$ is generated by $X$ over $\K$. Therefore $\eL\sb{\sigma \, \theta^*} \simeq \eL\sb{d/dt}$. 
Since $Q = 1 \in \K, \, \varphi(Q) = Q$ for an inifinitesimal deformation 
 $$
 \varphi \in \mathcal{NCF}\sb{\sigma , \, \theta ^*}(A)
 $$ 
 and we get \eqref{9.26b}. 
\end{proof}
\subsubsection{Quantum group enters}\label{9.28b}
To understand Lemma \ref{8.30c}, it is convenient to introduce a quantum group. 
\begin{definition}
We work in the category $(NCAlg/\mathbb{C})$. Let $A$ be a not necessarily commutative $\mathbb{C}$-algebra. 
We say that  two subsets $S,\,  T$ of $A$ are mutually commutative if for every $s\in T,\, 
 t \in T$, we have $[s,t]=st-ts = 0$.
\end{definition}
For $A \in  Ob \NCA$, we set
\[
H\sb{q}(A) =\{ 
 \begin{bmatrix}
e & f \\
0 & 1 
\end{bmatrix} \, | \, %\begin{array}{l}
e,\,  f \in A, \text{\it \/ $e$ is invertible in $A$, } qef=fe\}
. \]
\begin{lemma} For two matrices
\[
Z\sb{1} = \begin{bmatrix}
e\sb{1} &f\sb{1} \\
 0 & 1 
\end{bmatrix},\quad  
Z\sb{2} = \begin{bmatrix}
e\sb{2} & f\sb{2}\\
0       & 1 
\end{bmatrix} \in H\sb{q}(A), 
\]
if $\{e\sb{1},\, f\sb{1}\}$ and $\{e\sb{2},\, 
f\sb{2}\}$ are mutually commutative, then the product matrix 
\[
Z\sb{1} Z\sb{2} \in H\sb{q}(A).
\]
\end{lemma}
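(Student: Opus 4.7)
The statement is that $H_q(A)$ is closed under multiplication provided the two factors come from mutually commuting data. The plan is a direct matrix computation followed by verification of the single defining relation $qef=fe$ on the product.

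Computing
\[
Z_1 Z_2 = \begin{bmatrix} e_1 e_2 & e_1 f_2 + f_1 \\ 0 & 1 \end{bmatrix},
\]
I would set $E := e_1 e_2$ and $F := e_1 f_2 + f_1$ and check two things: that $E$ is a unit of $A$, and that $qEF = FE$. The first is immediate because $e_1$ and $e_2$ are each invertible, so the content of the lemma lies entirely in the second identity.

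For $qEF = FE$, I would distribute to obtain two monomials on each side. The hypotheses available are the four cross-commutations
\[
[e_1,e_2] = [e_1,f_2] = [f_1,e_2] = [f_1,f_2] = 0
\]
coming from the mutual-commutativity assumption, together with the two intrinsic relations $qe_1 f_1 = f_1 e_1$ and $qe_2 f_2 = f_2 e_2$ which express $Z_1, Z_2 \in H_q(A)$. The key observation is that each of the four resulting monomials contains precisely one factor from the $Z_1$-side paired with one factor from the $Z_2$-side; the cross-commutations let me park the two factors belonging to a single $Z_i$ next to each other, after which a single application of $qe_i f_i = f_i e_i$ disposes of each term and the two sides agree.

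The only difficulty, if one can call it that, is bookkeeping: one must track where the scalar $q$ sits so that the intrinsic relation applies in the correct direction in each monomial. Since each monomial requires at most two elementary rewrites, the verification is purely mechanical and I do not expect any conceptual obstacle.
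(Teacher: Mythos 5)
Your proposal is correct and follows essentially the same route as the paper: compute $Z_1Z_2$, reduce the claim to the single identity $q\,e_1e_2(e_1f_2+f_1)=(e_1f_2+f_1)e_1e_2$, and verify it term by term using the cross-commutations $[e_i,f_j]=[e_1,e_2]=[f_1,f_2]=0$ together with the intrinsic relations $qe_if_i=f_ie_i$. The paper states this verification in one line without writing out the monomials, so your version is merely a slightly more explicit account of the identical argument.
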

\begin{proof}
Since 
\[
Z\sb{1} Z\sb{2} = \begin{bmatrix}
e\sb{1}e\sb{2} & e\sb{1}f\sb{2} + f\sb{1}\\
 0 & 1
\end{bmatrix}, 
\]
we have to prove 
\[
qe\sb{1}e\sb{2}(e\sb{1}f\sb{2}+f\sb{1}) = (e\sb{1}f\sb{2} + f\sb{1})e\sb{1}e\sb{2}
. \]
This follows from the mutual commutativity of $\{e\sb{1},\,  f\sb{1}\}$,and   $\{e\sb{2},\,  f\sb{2}\}$,  
and the conditions $qe\sb{1}f\sb{1} = f\sb{1}e\sb{1},\, qe\sb{2}f\sb{2} = f\sb{2}e\sb{2}$. 
\end{proof}
\begin{lemma}
For a matrix
\[
Z = \begin{bmatrix}
e & f\\
0 & 1
\end{bmatrix} \in H\sb{q}(A),
\]
if we set
\[
\tilde{Z} = \begin{bmatrix}
e^{-1} & -e^{-1}f \\
 0  & 1
\end{bmatrix} \in M\sb{2},
\]
then 
\[
\tilde{Z} \in H\sb{q^{-1}}(A) \text{ and } 
\tilde{Z}Z = Z\tilde{Z} = I\sb{2}. 
 \]
\end{lemma}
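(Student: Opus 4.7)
The claim has two components: the membership assertion $\tilde{Z}\in H\sb{q^{-1}}(A)$ and the matrix identity $\tilde{Z}Z=Z\tilde{Z}=I\sb{2}$. My plan is to dispatch the membership by algebraically manipulating the hypothesis $qef=fe$ into the corresponding $q^{-1}$-commutation relation for the entries of $\tilde{Z}$, and then to verify the inverse identity by direct $2\times2$ multiplication.

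For the membership, the $(1,1)$-entry of $\tilde{Z}$ is $e^{-1}$, which is invertible in $A$ with inverse $e$, so that requirement is immediate. The nontrivial point is the defining commutation, which after cancelling the minus signs amounts to
\[
q^{-1} e^{-2} f = e^{-1} f e^{-1}.
\]
Starting from the hypothesis $qef=fe$, equivalently $ef=q^{-1}fe$, I would left-multiply by $e^{-1}$ to obtain $f=q^{-1}e^{-1}fe$, then right-multiply by $e^{-1}$ to get $fe^{-1}=q^{-1}e^{-1}f$, and finally left-multiply by $e^{-1}$ to recover the displayed identity. This shows $\tilde{Z}\in H\sb{q^{-1}}(A)$.

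The matrix identity is pure bookkeeping. The direct $2\times2$ computation gives
\[
Z\tilde{Z} = \begin{bmatrix} ee^{-1} & -ee^{-1}f+f \\ 0 & 1 \end{bmatrix} = I\sb{2},
\]
and symmetrically
\[
\tilde{Z}Z = \begin{bmatrix} e^{-1}e & e^{-1}f - e^{-1}f \\ 0 & 1 \end{bmatrix} = I\sb{2}.
\]
No commutation between $e$ and $f$ is needed for these cancellations, since each off-diagonal entry collapses by the trivial identities $-ee^{-1}f+f=0$ and $e^{-1}f-e^{-1}f=0$.

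The only real point of care, and the thing one might call an obstacle, is tracking the order of multiplication in the non-commutative setting: one must left- or right-multiply by $e^{-1}$ on the correct side at each step of rearranging $qef=fe$, since mixing sides produces relations that do not reduce to the target $q^{-1}e^{-2}f=e^{-1}fe^{-1}$. Once the correct sequence of one-sided multiplications is chosen, both parts of the lemma are entirely routine.
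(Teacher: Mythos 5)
Your proof is correct and follows the same route as the paper, which simply states that the lemma ``can be checked by a simple calculation''; you have written out that calculation explicitly and accurately, including the correct one-sided manipulations of $qef=fe$ needed to obtain $q^{-1}e^{-2}f=e^{-1}fe^{-1}$ in the non-commutative setting.
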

\begin{proof}
We can check it by a simple calculation. 
\end{proof}
\begin{remark}
If $q^{2}\neq 1$, for $f \neq 0$ $\tilde{Z} \not\in H\sb{q}(A)$. In fact let us set
\[
\tilde{Z} = \begin{bmatrix}
\tilde{e} & \tilde{f}\\
0 & 1
\end{bmatrix}
\]
so that $\tilde{e}=e^{-1}, \tilde{f} = -fe^{-1}$. 
Then $\tilde{e}\tilde{f} = e^{-1}(-fe^{-1}) = -qfe^{-2}$ and $\tilde{f}\tilde{e}= -fe^{-1}e^{-1} = -fe^{-2}$. 
So $q\tilde{e}\tilde{f} = -q^{2}fe^{-2}\neq -fe^{-2} = \tilde{f}\tilde{e}$, if $q^{2} \neq 1$, then and $\tilde{Z} \in H\sb{q^{-1}}(A)$. 
\end{remark}
For an algebra $R$, a sub-algebra $S$ of $R$ and a sub-set $T \subset R$, we denote by 
$S< T>\sb{alg}$ the sub-algebra of $R$ generated over $S$ by $T$.
\begin{lemma}
Let $u$ and $v$ be symbols over $\com$.
We have shown that we find a $\mathbb{C}$-Hopf algebra 
\begin{equation} \label{hopfa1}
\gH\sb q = \mathbb{C}<u,\, u^{-1}, \, v> \sb{alg}/(uv -q^{-1}vu)
\end{equation}
as an algebra so that
$$
uu^{-1} = u^{-1} u = 1 , \qquad u^{-1} v = qvu^{-1}.
$$
Definition of the algebra $\gH\sb q$ determines the multiplication
\[
m \colon \gH \sb q\otimes\sb{\mathbb{C}}\gH\sb q \to \gH\sb q, 
\]
the unit 
\[
\eta \colon \mathbb{C} \to \gH\sb q, 
\]
that is the composition of natural
 morphisms $$\com \rightarrow \com<u, \, u^{-1}, \, v>\sb{alg}$$ and $$\com<u, \, u^{-1}, \, v>\sb{alg} \rightarrow \com<u, \, u^{-1}, \, v>\sb{alg}/(uv-q^{-1}vu)=\gH\sb q.$$ 
 The product of matrices gives 
the co-multiplication
\[
\Delta  \colon \gH\sb q \to \gH\sb q\otimes\sb{\mathbb{C}}\gH\sb q, 
\] 
that is a $\com$-algebra morphism 
defined by
\[
\Delta (u)=u\otimes u, \qquad \Delta (u^{-1})=
u^{-1}\otimes u^{-1}, 
\qquad \Delta (v)=u\otimes v + v\otimes 1,
\]
for the generators $u,\, u^{-1},  \, v$ of the algebra ${\gH\sb q}$, 
the co-unit is a $\com$-algebra morphism 
\[
\epsilon \colon \gH\sb q \to \mathbb{C}, \qquad \epsilon(u)=\epsilon (u^{-1})=1, \, \epsilon(v)=0 
 \]
 for the generators $u, \, u^{-1}, \, v$ of the algebra 
${\gH\sb q}$.
The antipode 
$$
i\colon {\gH\sb q} \to {\gH\sb q}
$$
is a $\com$-anti-algebra morphism given by 
$$
i(u)=u^{-1}, \qquad i(u^{-1})= u,  \qquad (v) = -u^{-1}v.  .
$$
\end{lemma}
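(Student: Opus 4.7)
The plan is to verify in turn that the putative structure maps $\Delta$, $\epsilon$, $i$ are well defined on $\gH\sb q$ and then check the coalgebra and antipode axioms on the generators $u,\, u^{-1},\, v$. Since $\gH\sb q$ is presented by the relations $uu^{-1} = u^{-1}u = 1$ and $uv - q^{-1}vu = 0$, well-definedness of an algebra morphism (respectively an anti-algebra morphism) specified on generators amounts in each case to checking that the image of each relation vanishes in the target, and coassociativity, counit, and antipode identities between such morphisms need only be verified on generators.

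First I would verify that the prescription $\Delta(u) = u\otimes u$, $\Delta(u^{-1}) = u^{-1}\otimes u^{-1}$, $\Delta(v) = u\otimes v + v\otimes 1$ extends to a $\com$-algebra morphism $\Delta\colon \gH\sb q \to \gH\sb q\otimes\sb\com \gH\sb q$ by computing
\[
\Delta(u)\Delta(v) = (u\otimes u)(u\otimes v + v\otimes 1) = u^2\otimes uv + uv\otimes u,
\]
\[
q^{-1}\Delta(v)\Delta(u) = q^{-1}(u\otimes v + v\otimes 1)(u\otimes u) = q^{-1}u^2\otimes vu + q^{-1}vu\otimes u,
\]
which agree because $uv = q^{-1}vu$ in $\gH\sb q$; the identity $\Delta(u)\Delta(u^{-1}) = uu^{-1}\otimes uu^{-1} = 1\otimes 1$ is trivial. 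The counit $\epsilon$ obviously respects the relation since both $\epsilon(uv)$ and $q^{-1}\epsilon(vu)$ vanish. For the antipode $i$, well-definedness as an anti-algebra morphism reduces to $i(v)i(u) = q^{-1}i(u)i(v)$, i.e.\ $-u^{-1}v\cdot u^{-1} = -q^{-1}u^{-1}(u^{-1}v)$; applying the identity $u^{-1}v = qvu^{-1}$ (derived from the defining relation by multiplying on the left and right by $u^{-1}$) to both sides yields $-qvu^{-2} = -qvu^{-2}$.

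Next I would verify the coalgebra axioms on generators. On $u$ and $u^{-1}$ both coassociativity and the counit identity are the obvious group-like identities. On $v$,
\[
(\Delta\otimes\Id)\Delta(v) = u\otimes u\otimes v + u\otimes v\otimes 1 + v\otimes 1\otimes 1 = (\Id\otimes\Delta)\Delta(v),
\]
and $(\epsilon\otimes\Id)\Delta(v) = \epsilon(u)v + \epsilon(v)\cdot 1 = v = u\cdot\epsilon(v) + v\cdot\epsilon(1) = (\Id\otimes\epsilon)\Delta(v)$.

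Finally the antipode identity $m(i\otimes\Id)\Delta = \eta\epsilon = m(\Id\otimes i)\Delta$ is checked on generators: on $u$ it reads $u^{-1}u = 1 = \epsilon(u)\cdot 1$, and on $v$
\[
m(i\otimes\Id)\Delta(v) = i(u)v + i(v)\cdot 1 = u^{-1}v - u^{-1}v = 0 = \epsilon(v)\cdot 1,
\]
while $m(\Id\otimes i)\Delta(v) = u\cdot i(v) + v\cdot i(1) = -uu^{-1}v + v = 0$. The single genuinely non-trivial calculation, and hence the main (minor) obstacle, is the compatibility of the anti-morphism $i$ with the $q$-commutation relation; everything else reduces to the fact that $u$ is group-like and $v$ is $(u, 1)$-primitive, matching the product-of-matrices motivation coming from the parametrization of $H\sb q(A)$ by matrices $\begin{bmatrix} e & f \\ 0 & 1 \end{bmatrix}$.
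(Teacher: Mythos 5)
Your verification is correct, and it is worth noting that the paper itself gives no proof of this lemma: the Hopf structure is simply asserted, with $\Delta$ motivated by the product of matrices in $H\sb q(A)$ and the antipode properties deferred to Manin. Your direct check therefore supplies exactly the details the paper leaves to the reader, and the computations are all right. The only relation-sensitive verifications are the ones you single out: $\Delta(u)\Delta(v)=q^{-1}\Delta(v)\Delta(u)$, which follows from $uv=q^{-1}vu$ applied in each tensor factor, and the compatibility of the anti-morphism $i$ with the relation, where both $i(v)i(u)$ and $q^{-1}i(u)i(v)$ reduce to $-qvu^{-2}$ using $u^{-1}v=qvu^{-1}$; the coalgebra axioms then only express that $u$, $u^{-1}$ are group-like and $v$ is $(u,1)$-primitive. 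One step you use silently and should record in a sentence: reducing the antipode identity $m(i\otimes\Id)\Delta=\eta\epsilon=m(\Id\otimes i)\Delta$ to a check on the generators $u$, $u^{-1}$, $v$ is legitimate because $i$ is an algebra anti-morphism while $\Delta$ and $\epsilon$ are algebra morphisms, so the set of elements on which the two convolution identities hold is closed under multiplication (if $a$ and $b$ satisfy them, then $\sum i(a\sb{(1)}b\sb{(1)})a\sb{(2)}b\sb{(2)}=\sum i(b\sb{(1)})i(a\sb{(1)})a\sb{(2)}b\sb{(2)}=\epsilon(a)\epsilon(b)1$, and symmetrically); this is the standard lemma the paper implicitly invokes by citing the general theory.
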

Let us set
$$
\gH\sb {q\, L\n} := \gH\sb q\otimes \sb{\com} L\n
$$
so that $\gH\sb{q \, L\n}$ is an $L\n$-Hopf algebra. We 
notice that for an $L\n$algebra $A$ 
$$
\begin{array}{rl}
\gH\sb{q\, L\n}(A) &:= \Hom\sb{L\n\text{-algebra}}
(\gH\sb{q \,L\n}, \, A)  \\ 
&= 
 \{ 
\begin{bmatrix}
\begin{array}{cc}
e & f \\
0 & 1
\end{array} 
\end{bmatrix} \, | \, e, \, f \in A, \, ef=q^{-1} fe,  \text{ $e$ is invertible}
 \}.
\end{array}
$$

\begin{remark}
 We know by general theory that 
 the antipode $i\colon H \to H$ that is a linear map 
 making a few diagrams commutative, is necessarily an anti-endo-morphism of the algebra $H$ so
 that   
 $$
 i (ab) = i(b) i(a)  
 \text{\it\/  for all elements $a, \, b \in H$ and \/ }
i(1) =1.
 $$
\end{remark} 
See Manin \cite{man88}, section 1,\,2. 
\par 
The Hopf algebra $\gH\sb q$ is a $q$-deformation of the affine algebraic group $AF\sb {1\, \com}$ of affine transformations of the affine line. 
\par 
Anyhow, we notice that the quantum group 
appears in this very simple example showing that 
quantum groups are indispensable for a Galois theory of \qsi field extensions.

\subsection{Observations on the Galois structures 
of the field extension $\com (t)/ \com $}\label{10.5a}
%%%%%%%%%%%%%%%%%%%%%%%%%%%%%%%%%%%%%%%%%%%%%%%%%%%%%%%%%%%%%%%%%%%%%%%%%%%%%%%%%%%%%%%%%
Let us now examine that the group structure in \ref{9.28a} 
arising from the variation of initial conditions coincides with the 
quantum group structure defined in \ref{9.28b}.
\par 
To see this, we have to clearly understand 
the initial condition of 
a formal series 
$$
f(W, \, X) = \sum\sb {i= 0}^ \infty X^i a\sb i (W) \in 
F(\N , \, A[[W]] )[[X]]
$$
so that the coefficients $a\sb i$'s, which are elements of 
$F(\N ,\, A[[W]])$,  
are 
functions on $\N$ taking values in the formal power series ring $A[[W]]$.
The initial condition of $f(W,\, X)$ is the value of 
the function $f(0, \, 0)=a\sb{0} (0) \in F(\N, \, A)$ at $n=0$ which we may denote by 
$$
f(0, \, 0)|\sb {n=0}\in A. 
$$
\par
For $A \in Ob(NCAlg/L\n)$, we take an 
infinitesimal deformation 
$
\varphi \in \NCF{} (A)
$
so that the morphism         
$
\varphi \colon \eL \to F(\N,\, A[[W]])[[X]]
$
is determined by the image $\varphi (y)$ of 
$y \in L \subset \eL$, the \qsi firld 
$L$ being a sub-algebra of $\eL$ 
by the universal Hopf morphism. 
It follows from Lemma \ref{8.30c} that there exist $e,\, f \in A$
such that $qef=fe$, 
 the elements $e-1, \, f$ are nilpotent and such that   
\begin{equation}\label{9.28d}
\varphi (t) = \left( e(t + W) +f\right) Q +X.
\end{equation}
The above equality \eqref{9.28d} says 
that in the level of the initial condition, 
the dynamical system 
$ 
y \mapsto \varphi (y)
$
looks 
as
\begin{equation}\label{9.28e}
t \mapsto \text{ the initial condition of }\varphi (y) =
et + f.
\end{equation}
The composition of two mutually commutative transformations  
of the form \eqref{9.28e} is nothing but the multiplication of $2\times 2$ matrices. 
Therefore the quantum group structure is the same as in 
the group structure in 
\ref{9.28a}.
\par
The Hopf algebra $H$ in \ref{928b} defines a 
functor 
$$
\hat{\gH}\sb{q \, L\n}\colon (NCAlg/L\n )\to (Set)
$$
such that 
\begin{multline*}
\hat{\gH}\sb{q\, L\n}(A) =\{ \psi :\gH\sb q\otimes \sb\com L\n \to A \, |\, \psi \text{ is a $L\n$-algebra morphism }\\
\text{such that  }\psi (u) -1, \, 
\psi (v) \text{ are nilpotent} \} 
\end{multline*}
for $A \in (NCAlg/L\n)$.
In other words $\hat{\gH}\sb{q\, L\n}$ is the formal completion of the quantum group $\gH\sb q \otimes \sb \com L\n=
\gH\sb{q\, L\n}$.  
We can summarize our results in the following form. 
\begin{theorem}
The formal quantum group $\hat{\gH}\sb{q\, L\n}$ operates on the functor $\NCF$ in such a way that there exists a functorial isomorphisms 
\[
\NCF \simeq \hat{\gH}\sb{q\, L\n}. 
\]
The restriction of the functor $\NCF$ on the subcategory $(CAlg/L\n)$ gives the functorial isomorphism 
\[
\NCF |\sb{(CAlg/L\n)} \simeq \hat{\mathbb{G}}\sb{m\, L\n}.
\]
Or equivalently, 
\begin{enumerate}
\renewcommand{\labelenumi}{(\arabic{enumi})}
\item The infinitesimal Galois group of the \qsi extension $(\com (t),\,  \sigma , \,  \theta^{\ast} ) / \com$ on the category 
$(NCAlg/L\n )$ 
 of not necessarily commutative $L\n$-algebra is isomorphic to the formal 
 quantum group $\hat{\gH}\sb{q \, L\n}$. 
\item The infinitesimal Galois group of the \qsi extension $(\com (t), \sigma , \theta^{\ast} ) / \com$ on the category $(Alg / L\n )$ of  commutative $L\n$-algebras 
 is isomorphic to the formal group 
$\hat{\G}\sb{m}$.  
\end{enumerate}
\end{theorem}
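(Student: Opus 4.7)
The plan is to assemble the theorem from the ingredients already developed: Lemma \ref{8.30c} for the point-set description of $\NCF(A)$, Proposition \ref{10.17a} and the commutation relation $QX=qXQ$ for the structure on $\eL$, and the explicit Hopf-algebra $\gH\sb q$ defined in \ref{9.28b}. First I would fix $A \in Ob\NCA$ and, using Lemma \ref{8.30c}, identify
\[
\NCF(A) \;\longleftrightarrow\; \bigl\{(e,f)\in A\times A \,\bigm|\, fe=qef,\ 1-e \text{ and } f \text{ nilpotent}\bigr\}.
\]
Recalling the definition of $\gH\sb q=\com\langle u,u^{-1},v\rangle_{alg}/(uv-q^{-1}vu)$ and setting $\gH\sb{q\,L\n}=\gH\sb q\otimes\sb\com L\n$, an $L\n$-algebra map $\psi\colon\gH\sb{q\,L\n}\to A$ with $\psi(u)-1$ and $\psi(v)$ nilpotent is exactly a pair $(e,f)=(\psi(u),\psi(v))$ with $e$ invertible (since $1-e$ is nilpotent), $ef=q^{-1}fe$, and the nilpotency conditions. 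These two descriptions coincide, so the assignment $\varphi\mapsto (e\sb\varphi,f\sb\varphi)$ extracted from formula \eqref{9.28d} gives the functorial bijection $\NCF\simeq\hat{\gH}\sb{q\,L\n}$.

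The second task is to promote this set-theoretic identification to an operation of $\hat{\gH}\sb{q\,L\n}$ on $\NCF$ that makes it a principal homogeneous space. Here I would use the heuristic spelled out in \ref{10.5a}: the initial condition $t\mapsto et+f$ attached to $\varphi\in\NCF(A)$ corresponds to the matrix $Z\sb\varphi=\begin{bmatrix} e & f\\ 0 & 1\end{bmatrix}$, and the composition of two such infinitesimal deformations (whose coefficient sets are mutually commutative, as they land in the tensor product $A\otimes A$) is matrix multiplication, which is exactly the coproduct $\Delta(u)=u\otimes u,\ \Delta(v)=u\otimes v+v\otimes 1$ of $\gH\sb q$. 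This step must be justified rigorously: I would take two deformations $\varphi\sb 1,\varphi\sb 2\in\NCF(A)$ and form $(\varphi\sb 1\ast\varphi\sb 2)(t)$ by acting with $\varphi\sb 2$ on the coefficients of $\varphi\sb 1(t)$, then read off the resulting pair $(e,f)$ from \eqref{9.28d} and check that it equals $(e\sb 1 e\sb 2,\, e\sb 1 f\sb 2+f\sb 1)$, which is precisely the Hopf-algebra convolution dual to $\Delta$. The fact that the operation is simply transitive on $\NCF$ is immediate from the freeness of the parameters $(e,f)$, establishing the principal-homogeneous-space structure and hence the isomorphism of group functors $\infgal\sb{\sigma\theta}(L/k)\simeq\hat{\gH}\sb{q\,L\n}$.

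For the commutative part of the statement, I restrict to $A\in Ob(CAlg/L\n)$. Then for $(e,f)\in\NCF(A)$ the relation $fe=qef$ combined with commutativity of $A$ gives $(1-q)ef=0$; since $1-e$ is nilpotent, $e$ is a unit, and since $q\neq 1$, this forces $f=0$. Thus $\NCF(A)|_{(CAlg/L\n)}$ reduces to invertible $e\in A$ with $1-e$ nilpotent, i.e.\ a point of the formal multiplicative group $\hat{\G}\sb{m\,L\n}(A)$; the composition law $(e\sb 1,0)\ast(e\sb 2,0)=(e\sb 1 e\sb 2,0)$ matches the group law of $\hat{\G}\sb m$, yielding the second isomorphism. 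The consistency with Proposition \ref{5.14a} is automatic.

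The main obstacle, and the only place genuine care is needed, is the compatibility of the composition of infinitesimal deformations with the Hopf-algebra coproduct: one has to verify that transporting \eqref{9.28d} through the $\mathcal{D}$-morphism structure on $F(\N,A[[W]])[[X]]$ really produces $(e\sb 1 e\sb 2,\, e\sb 1 f\sb 2+f\sb 1)$ and not some twisted variant, and that the mutual commutativity hypothesis needed for $H\sb q$ to close under matrix multiplication is automatic because the two factors live in distinct tensor slots. Once this compatibility is in place, the remaining verifications (nilpotency, associativity, functoriality in $A$) are formal.
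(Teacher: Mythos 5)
Your proposal is correct and follows essentially the same route as the paper: the theorem is assembled exactly as you do it, by matching the point-set description of $\NCF(A)$ from Lemma \ref{8.30c} with the $A$-points of $\hat{\gH}\sb{q\,L\n}$, reading the group law off the initial conditions as in \ref{10.5a} (with the same caveat, recorded in Remark \ref{10.24a}, that the product is only defined for mutually commutative coefficient sets), and deriving the commutative case by observing that $fe=qef$ in a commutative algebra with $e$ a unit and $q\neq 1$ forces $f=0$, as in Sublemma \ref{5.15}.
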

\par 
The operation of formal quantum group requires a 
precision. 
\begin{remark}\label{10.24a}
We should be careful about the 
operation of quantum formal group. To be more precise, for $\varphi \in 
\mathcal{F}\sb{L/k}(A)$ and
$
 \psi   \in \hat{\gH}\sb{q\, L\n}(A)
$ 
so that 
we have 
$$
\varphi (t) = (e(t + W) +f)Q +X \in F(\N , \,A[[W]])[[X]]
$$
with $e, \, f\in A$ and 
 we imagine the
matrix 
$$
\left[ \begin{array}{cc}
\psi(u) & \psi(v) \\
 0 & 1 
\end{array} \right] \in M\sb 2 (A)
$$
corresponding to $\psi$. 
If 
the sub-sets of the algebra $A$,  $\{ \psi(u), \, \psi(v) \}$ and $\{ e, \, f \}$ are commutative,
the product 
$$
\psi \cdot \varphi = \varpi   \in \mathcal{F}\sb{L/k}(A)
$$
is defined  
to be
$$
\varpi (t) = (\psi(u) e (t+ W)+ \psi(u)f + \psi (v) )Q    + X \in F(\N , \, A[[W]])[[X]].
$$ 
\end{remark}
\section{The second example, the \qsi field extension $(\com(t,\, t^{\alpha}) , \,\sigma,\, \theta^{\ast})/\com$ }\label{10.4b}
\subsection{Commutative deformations}\label{1027a}
As in the previous section, let $t$ be a variable  over $\com$ 
and we assume that the complex number $q$ is not a root of unity   if we do not mention other assumptions on $q$. Sometimes we write the condition that 
$q$ is not a root of unity, simply   to 
recall it. 
We work under the condition  that $\alpha $ is an irrational complex number so that $t$ and $t^\alpha$ are algebraically independent over $\com$. 
Therefore the field $\com (t, \, t^\alpha )$
is isomorphic to the rational function field of two variables over $\com$. We denote 
by 
$\sigma$ the $\com$-automorphism of the field $\com (t, \, t^\alpha )$ such that 
$$
 \sigma (t) = qt \text{ and }
 \sigma (t^\alpha ) =q^\alpha t^\alpha .
$$
Let us set $\theta^{(0)} := \Id\sb{\com(t,t^{\alpha})}$, the map
\[
 \theta^{(1)}: = \frac{\sigma - \Id}{(q-1)t} \colon \com(t,t^{\alpha}) \rightarrow \com(t,t^{\alpha})
\]
and
\[
 \theta^{(n)} = \frac{1}{[n]\sb{q}!} \left( \theta^{(1)} \right)^{n}\qquad  \text{for}\qquad n = 2, 3, \cdots .
\]
So the  the $\theta^{(i)}$'s are  $\com$-linear operators on $\com(t,\, t^{\alpha})$ and 
\[
 L \colon = (\com(t,\, t^{\alpha}), \,\sigma,\,  \theta^{\ast})
\]
is a \qsi field. The restriction of $\sigma$ and $\theta^{\ast}$ to the subfield $\com$ are trivial. 
We denote the \qsi field  extension $L/\com$ by $L/k$. We denote $t^{\alpha}$ by $y$ so that the abstract field $\com(t,t^{\alpha}) = \com(t,\, y)$ is isomorphic to the rational function field of $2$ variables over$\com$. We take the derivations $\partial/ \partial t$ and $\partial / \partial y$ as a basis of the $L\n$-vector space $\mathrm{Der}(L\n/k\n)$ of $k\n$-derivations of $L\n$. Hence $L\s= (L\n,\{\partial/\partial t, \partial/\partial y \})$ as in \cite{ume11}. \par
Let us list the fundamental equations.
%\begin{equation}
\begin{align}
&\sigma(t) = qt, \qquad \sigma(y) = q^{\alpha}y, \label{5.24a}\\
&\theta^{(1)}(t) = 1, \qquad 
                        \theta^{(1)}(y) = [\alpha]\sb{q} \frac{y}{t}. \label{5.24d}
\end{align}
%\end{equation}
We are  going to determine the Galois group $\mathbf{NC}\infgal(L/k)$. Before we start, we notice
that since 
by Proposition \ref{10.17a}, the 
Gaois hull of the extension $\com (t), \, \sigma , \, \theta ^* )/\com $ is not a commutative algebra and 
sine $ ( \com (t) $ is a sub-firld of $\com ( t, \, t^\alpha )$,  the Galois hull of the \qsi field extension 
$(\com (t, \, t^\alpha ), \, \sigma .\,\theta ^*)/\com $ is not a  commutative algebra either. 
Consequently  the \qsi  field extension  
$\com (t, \, t^\alpha )/\com $ is not a Picard-Vessiot extension (See  \cite{ume11}).
So we have to go beyond the general theory of Heiderich \cite{hei10}, Umemura \cite{ume11} 
 for the definition  of the Galois group $\mathbf{NC}\infgal(L/k)$. \par
It follows from general definition that the universal Hopf morphism 
\[
\iota \colon L \rightarrow F(\N, L\n)[[X]]
\]
is given by 
\[
 \iota(a) = \sum\sb{n=0}^{\infty}X^{n}u[\theta^{(n)}(a)] \in F(\N,L\n)[[X]]
\]
for $a \in L$. Here for $b \in L$, we denote by $u[b]$ the element
\[
u[b] = \left[ \begin{array}{cccc}
0 & 1 & 2 & \cdots\\
b & \sigma(b) & \sigma^{2}(b) & \cdots
\end{array} \right]
\in F(\N,L\n)
. \]
It follows from the definition above of the universal Hopf morphism $\iota$, 
\[
 \iota(y) = \sum\sb{n=0}^{\infty}X^{n}{\binom{\alpha}{n}}\sb{q}t^{-n}Q^{\alpha - n}y, 
\]
where we use the following notations. For a complex number $\beta \in \alpha + \Z$ , 
\[
 [\beta]\sb{q} = \frac{q^{\beta} - 1}{q-1}
\]
and
\[
 \binom{\alpha}{n}\sb{q} = \frac{[\alpha]\sb{q}[\alpha -1]\sb{q} \cdots [\alpha -n + 1]\sb{q}}{[n]\sb{q}!}. 
\]
\[
Q = \left[ \begin{array}{cccc}
0 & 1 & 2 & \cdots\\
1 & q & q^{2} & \cdots
\end{array} \right]
\qquad \text{and}\qquad 
Q^{\alpha} = \left[ \begin{array}{cccc}
0 & 1 & 2 & \cdots\\
1 & q^{\alpha} & q^{2\alpha} & \cdots
\end{array} \right]
. \]
We set
\[
 Y\sb{0} := \sum\sb{n=0}^{\infty}X^{n}\binom{\alpha}{n}\sb{q} t^{-n}Q^{\alpha -n}
\]
so that 
\begin{equation}\label{5.21b}
 \iota(y)= Y\sb{0}y \qquad \text{\it in } F(\N,L\n)[[X]]. 
\end{equation}
Considering $k\n$-derivations $\partial/\partial t, \partial /\partial y $ in $L\n$ and therefore in $F(\N,L\n)$ or in $F(\N,L\n)[[X]]$, 
we generate the Galois hull $\eL$ by $\iota(L)$ and $L\n$ so that $\eL \subset F(\N,L\n)[[X]]$ is invariant under $\Sigma$, 
the $\Theta^{(i)}$ and $\{ \partial/\partial t, \partial/ \partial y \}$. 
We may thus consider
\[
 \eL \hookrightarrow F(\N, L\s)[[X]]
. \]
By the universal Taylor morphism 
\[
 L\s = (L\n, \{\partial / \partial t, \partial/ \partial y \}) \rightarrow L\s[[W\sb{1},W\sb{2}]]
. \]
We identify $\eL$ by the canonical morphism
\[
 \iota \colon \eL \rightarrow F(\N, L\s)[[X]] \rightarrow F(\N, L\n[[W\sb{1},W\sb{2}]])[[X]]. 
\]
We study  first  the deformations of $\iota$ on the category $(CAlg/L\n)$ of commutative $L\n$-algebras and then generalize the  argument to the category $(\NCA)$ of not necessarily commutative $L\n$-algebras. \par
For a commutative $L\n$-algebra $A$, let $\varphi \colon \eL \to F(\N,A[[W\sb{1},W\sb{2}]])[[X]]$ 
be an infinitesimal deformation of the canonical morphism $\iota \colon \eL \rightarrow F(\N, L\n[[W\sb{1},W\sb{2}]])[[X]]$ 
so that both $\iota$ and $\varphi$ are compatible with operators $\{ \Sigma, \Theta^{\ast}, \partial /\partial W\sb{1},\partial / \partial W\sb{2} \}$. 
\begin{lemma}
The infinitesimal deformation $\varphi$ is determined by the images $\varphi(Y\sb{0}),\, \varphi(Q)$ and $\varphi(X)$. 
\end{lemma}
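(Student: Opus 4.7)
The plan is to show that the Galois hull $\eL$ is generated, as a $\D$-closed $\K$-subalgebra of $F(\N,\, L\s)[[X]]$, by the three elements $Y\sb 0, Q, X$. Once this is established, the lemma is immediate: an infinitesimal deformation $\varphi$ by hypothesis agrees with the canonical morphism $\iota$ on $\K$ and commutes with every operator in $\D = \{\hat{\Sigma},\, \hat{\Theta}^*,\, \partial/\partial t,\, \partial/\partial y\}$, so its values on a $\D$-generating set over $\K$ uniquely extend to all of $\eL$.

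By definition $\eL$ is the smallest $\D$-closed $\K$-subalgebra of $F(\N,\, L\s)[[X]]$ that contains $\iota(L)$. Since $L = \com(t, y)$ is generated over $\com$ as a field by $t$ and $y$, the subring $\iota(L)$ is generated inside the ambient ring by $\iota(t) = tQ + X$ and $\iota(y) = Y\sb 0 y$ together with their inverses. The three elements $Q, X, Y\sb 0$ themselves lie in $\eL$, as one sees by applying the derivations $\partial/\partial t,\, \partial/\partial y \in \D$, which kill $Q, X$ (these being independent of $t, y$) and $Y\sb 0$ (independent of $y$):
\[
 \frac{\partial}{\partial t}\iota(t) = Q,\qquad X = \iota(t) - tQ,\qquad \frac{\partial}{\partial y}\iota(y) = Y\sb 0.
\]
Conversely, $\iota(t)$ and $\iota(y)$ lie in the $\K$-subalgebra generated by $Q, X, Y\sb 0$, so the $\D$-closure of that subalgebra contains both $\iota(L)$ and $\K = L\s$, and by minimality must equal $\eL$.

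Hence $\{Y\sb 0, Q, X\}$ is a $\D$-generating set of $\eL$ over $\K$, and the triple $(\varphi(Y\sb 0), \varphi(Q), \varphi(X))$ together with the condition $\varphi|\sb \K = \iota|\sb \K$ determines $\varphi$ on all of $\eL$. The main subtlety is the treatment of the inverses $\iota(t)^{-1} = (tQ + X)^{-1}$ and $\iota(y)^{-1} = y^{-1} Y\sb 0^{-1}$, which must be expressible as formal power series in $X$ inside the $\D$-closure of $\K\langle Q, X, Y\sb 0 \rangle$: existence follows from inverting $tQ$ in the twisted power series ring (using the invertibility of $Q$ in $F(\N, L\s)$ and the positive $X$-valuation of $X$, with due care for the commutation $QX = qXQ$), and compatibility with $\varphi$ is automatic since $\varphi$ is a unital ring morphism sending invertible elements to invertible elements. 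This routine verification completes the proof of the lemma.
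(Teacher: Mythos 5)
Your proof is correct and follows essentially the same route as the paper: the paper simply asserts that $\eL$ is generated over $\K=L\s$ by $\iota(t)=tQ+X$ and $\iota(y)=Y_0y$ together with the operators and localizations, whereas you additionally verify that $Q$, $X$, $Y_0$ themselves lie in $\eL$ via $\partial/\partial t$ and $\partial/\partial y$ and address the inverses explicitly. This is the same argument with a bit more detail filled in.
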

\begin{proof}
The Galois hull $\eL/\K$ is generated over $\K= L\s$ by $\iota(t) = tQ + X$ and $\iota(y) = Y\sb{0}y$ with operators $\Theta^{\ast}, \, \Sigma$ and $\partial / \partial t, \, \partial /\partial y$ along with localizations. This proves the Lemma. 
\end{proof}
Let us set $Z\sb{0} := \varphi(Y\sb{0}) \in F(\N, A[[W\sb{1},W\sb{2}]])[[X]]$ and expand it into a formal power series in X:
\[
 Z\sb{0} = \sum\sb{n = 0}^{\infty}X^{n}a\sb{n}, \,\, \text{ with } a\sb{n}\in F(\N,A[[W\sb{1},W\sb{2}]]) 
\qquad  \text{for every } n \in \N 
. \]
It follows from \eqref{5.24a} and \eqref{5.21b}
\[
 \Sigma(Z\sb{0}) = q^{\alpha}Z\sb{0}
\]
so that 
\begin{equation}\label{6.2a}
 \sum\sb{n=0}^{\infty}X^{n}q^{n}\Sigma(a\sb{n}) = q^{\alpha} \sum\sb{n=0}^{\infty} X^{n}a\sb{n}. 
\end{equation}
Comparing the 
coefficient of the $X^{n}$'s in \eqref{6.2a} we get
\[
 \Sigma(a\sb{n}) = q^{\alpha -n}a\sb{n} \,\, \text{ for } n \in \N.
\]
So $a\sb{n} = b\sb{n}Q^{\alpha - n}$ with $b\sb{n} \in A[[W\sb{1},W\sb{2}]]$ for $n \in \N$. Namely we have 
\begin{equation}\label{5.24g}
 Z\sb{0} = \sum\sb{n=0}^{\infty}X^{n}b\sb{n}Q^{\alpha -n} \qquad \text{with}\qquad  b\sb{n} \in F(\N, A[[W\sb{1},W\sb{2}]]) .
\end{equation}
It follows from \eqref{5.24d}
\[
 \sigma(y) - y = \theta^{(1)}(y)(q-1)t
\]
and so by \eqref{5.24a}
\[
(q^{\alpha} - 1)y = \theta^{(1)}(y)(q-1)t.
\]
Applying the canonical morphism $\iota$ and the deformation $\varphi$, we get 
\begin{equation}
(q^{\alpha}-1)Y\sb{0}= \Theta^{(1)} (Y\sb{0})(q-1)(tQ+X) \label{5.24e}
\end{equation}
as well as 
\begin{equation}
(q^{\alpha}-1)Z\sb{0}= 
\Theta (Z\sb{0})(q-1)(teQ+X) . \label{5.24f}
\end{equation}
Substituting \eqref{5.24g} into \eqref{5.24f}, we get a recurrence relation among the $b\sb{m}$'s; 
\[
 b\sb{m+1} = \frac{[\alpha - m]\sb{q}}{[m+1]\sb{q}(e(t + W\sb{1}))}b\sb{m}. 
\]
Hence 
\begin{equation}\label{11.16a}
 b\sb{m} = \binom{\alpha}{m}\sb{q}(e(t+W\sb{1}))^{-m}b\sb{0} \qquad \text{for every } m \in \N,
\end{equation}
where $b\sb{0} \in A[[W\sb{1}, W\sb{2}]]$ and every coefficient of the power series $b\sb{0}-1$ are nilpotent. \\
Since 
\[
 \frac{\partial Y\sb{0}}{\partial y} 
= \frac{\partial}{\partial W\sb 2}\left( \sum\sb{n=0}^{\infty} X^{n} \binom{\alpha}{n}\sb{q}(t+W\sb{1})^{-n} Q^{\alpha -n} \right) = 0,
\]
we must have 
$$
0 =\varphi ( \frac{\partial Y\sb{0}}{\partial y})
=\frac{\partial \varphi(Y\sb{0})}{\partial W\sb 2}=
\frac{\partial Z\sb{0}}{\partial W\sb 2}
$$
 and consequently 
\[
 \frac{\partial b\sb{0}}{\partial W\sb{2}} = 0
\]
so that 
\[
 b\sb{0} \in A[[W\sb{1}]]
. \]
 by \eqref{5.24g}.
Therefore, we have determined the image 
\[
Z\sb 0 = \varphi(Y\sb{0}) = \sum\sb{n=0}^{\infty} X^{n} \binom{\alpha}{n}\sb{q}(e(t+W\sb{1}))^{-n} Q^{\alpha -n}b\sb{0}
\]
by \eqref{11.16a}, 
where all the coefficients of the power series $b\sb{0} -1$ are nilpotent. 
\subsection{The functor 
$NC\mathcal{F}\sb{ L/k}$ 
of infinitesimal deformations 
restricted on the category $(CAlg/L\n)$ of commutative $L\n$-algebras }
%and
%$\infgal\sb{\sigma \theta ^*} (L/k)$ }
We can summarize what we have proved as follows. 
\begin{proposition}\label{5.24i}
There exists a functorial inclusion on the category 
$(CAlg/L\n)$ of commutative $L\n$-algebras 
\begin{multline}
%\begin{equation}
\mathcal{NCF}\sb{L/k}(A) 
 \hookrightarrow \hat{G}\sb{II}(A) :=\{ (e,\,  b(W\sb{1})) 
   \in A \times A[[W\sb 1]]\, |\, \\
       \text{\it all the coefficients of } b(W\sb 1) 
       \text{ and } e-1 \text{ are nilpotent } \} 
      \end{multline}
for every commutative $L\n$-algebra $A$. 
\end{proposition}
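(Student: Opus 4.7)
The plan is to package the computations already carried out in subsection \ref{1027a} into the explicit assignment $\varphi \mapsto (e,\, b\sb 0)$ and verify that this is a well-defined injection, natural in the $L\n$-algebra $A$. The key observation is that the Galois hull $\eL$ is generated, as a $\D$-stable subalgebra of $F(\N,\, L\s)[[X]]$, by $L\s$ together with $\iota (t) = tQ + X$ and $\iota (y) = Y\sb 0 y$; differentiating $\iota(t)$ by $\partial /\partial t$ extracts $Q \in \eL$, and since $y \in L\s$ is invertible, the element $Y\sb 0$ itself belongs to $\eL$. Hence any infinitesimal deformation $\varphi$ is determined by the triple $(\varphi (X),\, \varphi (Q),\, \varphi (Y\sb 0))$.

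Next, I would restrict $\varphi$ to the sub-hull attached to the embedded \qsi field extension $(\com (t),\, \sigma,\, \theta ^* )/\com$, which is exactly $L\n \langle X,\, Q\rangle\sb{alg}$ by Proposition \ref{10.17a}. The argument of Sublemma \ref{5.15} applies verbatim in the commutative setting: the equations $\hat{\Sigma}(X) = qX$, $\hat{\Sigma}(Q) = qQ$, $\hat{\Theta}^{(1)}(X) = 1$ and $\hat{\Theta} ^{(i)}(Q)=0$, together with the relation $QX = qXQ$ and the assumption $1-q \ne 0$, force $\varphi (X)=X$ and $\varphi (Q)=eQ$ for a (necessarily unique) $e \in A$ with $e-1$ nilpotent. (In the commutative case the would-be $f$-coefficient of $\varphi (X)$ is annihilated by $1-q$.)

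The calculation already performed in subsection \ref{1027a} exhibits $Z\sb 0 = \varphi (Y\sb 0)$ as a power series $\sum \sb n X^n b\sb n Q^{\alpha -n}$, and the recurrence extracted from \eqref{5.24f} determines $b\sb m = \binom{\alpha}{m}\sb q (e(t+W\sb 1))^{-m}b\sb 0$, see \eqref{11.16a}. The constraint $\partial Y\sb 0/\partial y =0$ translates to $\partial b\sb 0 /\partial W\sb 2 = 0$, so $b\sb 0 \in A[[W\sb 1]]$, and because $\varphi$ is congruent to $\iota$ modulo nilpotent elements (and for $\iota$ itself we have $b\sb 0 =1$), every coefficient of $b\sb 0 -1$ is nilpotent. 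Sending $\varphi$ to the pair $(e,\, b\sb 0)$ thus gives a well-defined map into $\hat{G}\sb{II}(A)$; it is injective because the triple $(\varphi(X),\, \varphi(Q),\, \varphi(Y\sb 0))$ is recovered from $(e,\, b\sb 0)$, and these values determine $\varphi$ on a generating set of $\eL$. Functoriality in $A$ is immediate since all the formulas are polynomial/power-series expressions with coefficients in $L\n$.

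The technical heart of the proof — already handled in subsection \ref{1027a} — is solving the linear recurrence for the $b\sb m$'s simultaneously with isolating the two constraints coming from the mutually commuting derivations $\partial /\partial W\sb 1,\, \partial /\partial W\sb 2$. The subtle point in the statement itself is that only an inclusion, not an equality, is asserted: a pair $(e,\, b\sb 0)$ in $\hat{G}\sb{II}(A)$ must still be compatible with the ambient commutation relations inside $F(\N,\, A[[W]])[[X]]$ between $\iota (t)$ and $\iota(y)$, and one expects these extra conditions to cut out a proper sub-functor that is identified with a formal quantum group in the sequel, analogous to the picture obtained in Section \ref{10.4a}.
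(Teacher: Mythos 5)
Your proposal is correct and follows essentially the same route as the paper: the paper's own proof is a one-line assignment $\varphi \mapsto (e,\, b\sb 0(W\sb 1))$ whose justification is exactly the computation of subsection \ref{1027a} (the determination of $\varphi(X),\, \varphi(Q)$ via the first example, the recurrence \eqref{11.16a} for the $b\sb m$'s, and the constraint $\partial b\sb 0/\partial W\sb 2 = 0$), which you have spelled out together with the routine injectivity and functoriality checks. Your reading that the nilpotency condition should bear on the coefficients of $b\sb 0 - 1$ (rather than of $b\sb 0$ as in the proposition's displayed formula) agrees with the computation and with the paper's subsequent description of $\hat{G}\sb{II}(A)$.
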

\begin{proof}
In fact, we send a deformation $\varphi \in \mathcal{NCF}\sb{L/k}(A)$ to $(e,\,  b\sb{0}(W)) \in A\times A[[W\sb{1}]] $ that is an element  of the sub-set $\hat{G}\sb{II}(A)$ of $A\times A[[W\sb{1}]]$.
\end{proof}
\begin{conjecture}\label{5.24j}
If $q$ is not a root of unity, 
the inclusion in Proposition \ref{5.24i} is the equality. 
\end{conjecture}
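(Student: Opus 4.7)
The inclusion in Proposition \ref{5.24i} is already surjective onto the set of allowed first coordinates by the analysis of the \qsi field extension $\com (t)/\com $ in Section \ref{10.4a} (any admissible $e$ arises). So only the $b\sb 0 (W\sb 1)$-coordinate remains. The plan is to go in the reverse direction of the proof of Proposition \ref{5.24i}: given a pair $(e, \, b\sb 0 (W\sb 1)) \in \hat{G}\sb{II}(A)$ with the stated nilpotency, exhibit an $L\n$-algebra deformation $\varphi \in \mathcal{NCF}\sb{L/k} (A)$ whose invariants are exactly $(e,\, b\sb 0)$.

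First I would set the candidate on generators by writing
\[
\varphi (\iota (t) ) := e(t+W\sb 1 )Q +X,\qquad
\varphi (\iota (y) ) := Z\sb 0 \cdot (y+W\sb 2 ),
\]
with
\[
Z\sb 0 := \sum\sb {n=0}^\infty X^n \binom{\alpha}{n}\sb q (e(t + W\sb 1 ))^{-n}\, Q^{\alpha -n}\, b\sb 0 (W\sb 1 ),
\]
and then extend $\varphi$ to all of $\eL $ by forcing $\mathcal{D}$-equivariance (for $\hat{\Sigma},\, \hat{\Theta}^* ,\, \partial /\partial W\sb 1 ,\, \partial /\partial W\sb 2 $), $\K$-linearity, and compatibility with localizations (the units $\iota (t), \, \iota (y)$ must go to units, which is where the nilpotency of $e-1$ and of the coefficients of $b\sb 0 -1$ is used). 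Since the Galois hull $\eL $ is generated over $L\s$ by $\iota (t)$ and $\iota (y)$ under $\mathcal{D}$ and localization, this prescription determines $\varphi $ uniquely.

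The heart of the argument is to check that this prescription is \emph{consistent}, i.e.\ that the images $\varphi (\iota (t)),\, \varphi (\iota (y))$ satisfy in $F(\N ,\, A[[W\sb 1, W\sb 2 ]])[[X]]$ all the $\mathcal{D}$-relations that $\iota (t),\, \iota (y)$ satisfy in $F(\N ,\, L\s ) [[X]]$. There are essentially three relations to verify, all of which are direct $q$-calculus computations using the explicit form of $Z\sb 0$:
\begin{enumerate}
\renewcommand{\labelenumi}{(\arabic{enumi})}
\item $\hat{\Sigma}(Z\sb 0 ) = q^\alpha Z\sb 0 $, obtained term by term since $\hat{\Sigma}$ scales $X^n Q^{\alpha -n} $ by $q^n \cdot q^{\alpha -n} = q^\alpha$ and fixes the coefficients $b\sb 0 (W\sb 1)$ and $(e(t+W\sb 1 ))^{-n}$;
\item the $\hat{\Theta}^{(1)}$-equation $(q^\alpha -1) Z\sb 0 =\hat{\Theta}^{(1)}(Z\sb 0 )(q-1)(e(t+W\sb 1)Q + X)$, which is equivalent to the recurrence $b\sb{m+1} = \frac{[\alpha -m]\sb q}{[m+1]\sb q \, e(t + W\sb 1 )} b\sb m$ already derived in the proof of Proposition \ref{5.24i}, together with a routine $q$-binomial identity;
\item $\partial Z\sb 0 /\partial W\sb 2 = 0$, which is automatic since $b\sb 0 \in A[[W\sb 1]]$ and no other factor depends on $W\sb 2$.
\end{enumerate}
The iterated relations $\hat{\Theta}^{(i)} = \frac{1}{[i]\sb q !} (\hat{\Theta}^{(1)})^i $ then follow from (2), provided division by $[i]\sb q !$ is permissible — which is exactly where the hypothesis that $q$ is not a root of unity enters. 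With these relations in hand, compatibility with the $\{D\sb 1 , D\sb 2 \}$-structure on the coefficient ring and with the non-commutativity $QX=qXQ$ is immediate because $e, \, b\sb 0 (W\sb 1) \in A[[W\sb 1 , W\sb 2 ]]$ are central and constant along $\N$, so $\hat{\Sigma}$ fixes them and they commute with $X$.

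The main obstacle I anticipate is not any single identity but the systematic verification that no hidden relation in $\eL $ is violated — equivalently, a description of $\eL $ by generators and relations as a $\mathcal{D}$-algebra. One clean way to finesse this is to avoid presenting $\eL $ altogether: define $\varphi $ from its value on $\iota (t), \, \iota (y )$ by extending $\iota $ through the unique morphism of $\mathcal{D}$-algebras, then use the embedding $\eL \hookrightarrow F(\N , \,L\s )[[X]] \to F(\N , \, A[[W]])[[X]]$ together with the three relations above to conclude that $\varphi $ is well-defined on the $\mathcal{D}$-sub-algebra generated by $\iota (t),\, \iota (y) $ (and their localizations). The closing step is to verify that $\varphi \equiv \iota \pmod{\text{nilpotents}}$ (immediate from the nilpotency of $e-1$ and of the coefficients of $b\sb 0 -1$) and that $\varphi |\sb{\K} = \iota|\sb \K $ (built into the construction since $\K $ is $\mathcal{D}$-generated by $L\s $). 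This produces the required section $\hat{G}\sb {II}(A) \to \mathcal{NCF}\sb{L/k}(A)$, proving the conjecture.
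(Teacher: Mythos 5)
The statement you are trying to prove is a \emph{conjecture} in the paper: the authors give no proof, only a heuristic justification (Remark \ref{5.24l}, Lemma \ref{5.24n}, and the discussion following it), so there is no proof of record to compare yours against. Unfortunately, your argument does not close the gap that the authors themselves identify as the reason the statement remains conjectural.

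The gap is the consistency check that you flag as ``the main obstacle'' and then attempt to finesse. The Galois hull $\eL$ is not a free object: by Lemma \ref{5.24n} it is a localization of $L\s [Q, X, \frac{1}{tQ+X}][\frac{\partial ^l}{\partial t^l}Y\sb 0]\sb{l \in \N}$, a concrete subalgebra of $F(\N , L\s )[[X]]$, and it contains \emph{all} the partial derivatives $\partial ^n Y\sb 0 / \partial t ^n$ because it must be closed under $\partial /\partial t$. A map $\varphi$ defined on generators extends to a ring homomorphism on $\eL$ only if it respects every algebraic relation among these elements over (the localization of) $L\s [Q, X]$ --- not just the three $\mathcal{D}$-relations you verify, which are necessary conditions extracted in the forward direction (the proof of Proposition \ref{5.24i}) but are not known to generate all relations. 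If a nontrivial polynomial relation $P(Y\sb 0 , \partial Y\sb 0/\partial t , \ldots ) =0$ held in $\eL$, then $\varphi$ would have to satisfy $P(Z\sb 0 , \partial Z\sb 0 /\partial W\sb 1 , \ldots ) =0$, which would impose constraints on the derivatives $\partial ^n b\sb 0 /\partial W\sb 1 ^n$ and rule out arbitrary choices of $b\sb 0 (W\sb 1)$. The paper says explicitly that the conjecture rests on the \emph{guess} that no such relation exists (``it is very hard to find a non-trivial algebraic relation among the partial derivatives \ldots so that we could guess that there would be none''), i.e.\ on an unproven algebraic-independence statement. Your proposal does not prove that independence; saying you will ``avoid presenting $\eL$ altogether'' and extend ``through the unique morphism of $\mathcal{D}$-algebras'' presupposes exactly the freeness that is in question. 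Until the algebraic independence of $\{\partial ^n Y\sb 0 /\partial t^n\}\sb{n\in\N}$ over the relevant base is established, the section $\hat{G}\sb{II}(A) \to \mathcal{NCF}\sb{L/k}(A)$ you construct is not known to be well defined, and the conjecture remains open.
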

%We are guided to the following \\
The set 
\[
 \hat{G}\sb{II}(A) = \left\{ (e, \, b(W\sb{1})) \in A\times A[[W\sb{1}]] \left| \begin{array}{l}
\text{\it $e-1\in A$  and all the coefficients of}\\
\text{\it  the power series $b(W\sb{1})-1$ are nilpotent}
\end{array} \right. \right\}
\]
has a natural group structure functorial in $A \in Ob(Alg/L\n)$. \par
Namely for two elements $(e\sb{1}, \, b\sb{1}(W\sb{1})),\, (e\sb{2},b\sb{2}(W\sb{1})) \in \hat{G}\sb{II}(A)$, the product is given by 
\[
(e\sb{1}, \, b\sb{1}(W\sb{1})) \times (e\sb{2}, \, b\sb{2}(W\sb{2})) 
= (e\sb{1}e\sb{2},\, \sb{1}(e\sb{2}W\sb{1} + e\sb{2}-1)b\sb{2}(W\sb{1})). 
\]
the unit being $(1,\, 1) \in \hat{G}\sb{II}(A)$ and the inverse 
\[
(e, \, b(W\sb{1}))^{-1} = (e^{-1},\, b(e^{-1}W\sb{1} + e^{-1} - 1)^{-1})
. \]
is an element of $\hat{G}\sb {II}(A)$. 
\begin{proposition}\label{5.24k}
If Conjecture \ref{5.24j} is true, we have isomorphism 
\[
NC\infgal(L/k)\, |\,  \sb{(CAlg/:\n)} \simeq \hat{G}\sb{II}. 
\]
of group functors. 
\end{proposition}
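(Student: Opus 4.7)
The plan is to combine two ingredients: the principal homogeneous space structure $(\infgal(L/k), \mathcal{F}\sb{L/k})$ furnished by the general theory, together with the assumed set-theoretic bijection $\mathcal{NCF}\sb{L/k}(A) \simeq \hat{G}\sb{II}(A)$ of Conjecture \ref{5.24j}, and then to verify that the abstract group law coming from composition of infinitesimal automorphisms agrees with the explicit product law given on $\hat{G}\sb{II}$.

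First, I would invoke the principal homogeneous space property (Theorem 2.20 of \cite{mori09}, adapted to the \qsi setting as already announced in the paper): $\infgal(L/k)$ acts freely and transitively on $\mathcal{F}\sb{L/k}$. Fixing the canonical morphism $\iota$ as basepoint in $\mathcal{F}\sb{L/k}(A)$ yields, for every commutative $L\n$-algebra $A$, a functorial bijection $\Psi\sb A \colon \infgal(L/k)(A) \to \mathcal{F}\sb{L/k}(A)$, $g \mapsto g \cdot \iota$. Composing with the bijection supplied by Conjecture \ref{5.24j} gives a functorial bijection $\Phi\sb A \colon \infgal(L/k)(A) \to \hat{G}\sb{II}(A)$, which by construction carries the identity automorphism to the unit $(1,1)$.

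Second, I would verify that $\Phi$ is a homomorphism by following the standard recipe of \ref{9.28a}: for $g\sb 1, g\sb 2 \in \infgal(L/k)(A)$ with $\Phi(g\sb i) = (e\sb i, b\sb i(W\sb 1))$, apply $g\sb 2$ and then $g\sb 1$ to the basepoint $\iota$ and read off the resulting pair in $\hat{G}\sb{II}(A)$ by tracing the effect on the two distinguished generators $\iota(t) = tQ + X$ and $\iota(y) = Y\sb 0 y$ of $\eL$ over $\K$. For the $Q$-coordinate, the successive scalings compose to multiplication $e\sb 1 e\sb 2$; for the $b$-coordinate, after $g\sb 2$ the effective ``initial condition'' seen by $g\sb 1$ is $e\sb 2(t + W\sb 1)$ rather than $t + W\sb 1$, so that when the power series $b\sb 1$ is applied it must be reparametrised accordingly. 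The substitution $W\sb 1 \mapsto e\sb 2 W\sb 1 + (e\sb 2 - 1)$ should emerge, giving the expected product $(e\sb 1 e\sb 2, \, b\sb 1(e\sb 2 W\sb 1 + e\sb 2 - 1)\, b\sb 2(W\sb 1))$ that defines multiplication on $\hat{G}\sb{II}$. Compatibility with the inverse $(e, b)^{-1} = (e^{-1}, b(e^{-1} W\sb 1 + e^{-1} - 1)^{-1})$ then follows from the group axioms.

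The principal obstacle is the precise bookkeeping for this $W\sb 1$-substitution under composition, since it is the only non-trivial feature of the product rule. Concretely, I would use the explicit formula \eqref{11.16a} together with the defining recurrence coming from \eqref{5.24f} to show that if $\varphi\sb 2(Y\sb 0)$ has the form dictated by the pair $(e\sb 2, b\sb 2)$, then applying $\varphi\sb 1$ rescales its $Q^{\alpha - n}$-coefficients and forces $b\sb 1$ to be re-expanded around the shifted base-point produced by $\varphi\sb 2$. Once this step is carried out rigorously, both the multiplicativity of the $e$-parameter and the composition law for the $b\sb 0$-factor are immediate consequences of uniqueness in \eqref{11.16a}, completing the identification of group functors stated in the proposition.
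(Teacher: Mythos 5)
Your argument is correct and is essentially the one the paper intends: the paper states Proposition \ref{5.24k} without any proof, and the ingredients you assemble --- the principal homogeneous space property of $(\infgal(L/k),\mathcal{F}\sb{L/k})$ trivialized at the basepoint $\iota$, the bijection of Conjecture \ref{5.24j}, and the identification of the composition law with the explicit product on $\hat{G}\sb{II}$ via initial conditions as in \ref{9.28a}, \ref{10.5a} and Remark \ref{10.24a} --- are exactly what the surrounding text supplies. One small caveat: your own reasoning (``the effective initial condition seen by $g\sb 1$ is $e\sb 2(t+W\sb 1)$'') actually yields the substitution $W\sb 1 \mapsto e\sb 2 W\sb 1 + (e\sb 2 -1)t$, consistent with the quantum product $\xi(gW\sb 1 + (g-1)t + h)\eta(W\sb 1)$ at $h=0$, rather than the $W\sb 1 \mapsto e\sb 2 W\sb 1 + e\sb 2 - 1$ printed in the definition of $\hat{G}\sb{II}$; the latter appears to be a typographical slip in the paper, so you should trust your computation over the printed formula.
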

\begin{remark}\label{5.24l}
We explain a background of Conjecture \ref{5.24j}. 
\end{remark}
\begin{lemma}\label{5.24n} The Galois hull $\eL$ is a localization of the following ring
\[
 L\s\left[Q, X, \frac{1}{tQ + X}\right]\left[\frac{\partial^{l}}{\partial t^{l}}Y\sb{0}\right]\sb{l\in\N}. 
\]
\end{lemma}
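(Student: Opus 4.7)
The plan is to prove both inclusions for a suitable localization $\tilde R$ of the ring $R := L\s[Q,\, X,\, (tQ+X)^{-1}][\partial^{l}Y\sb 0/\partial t^{l}]\sb{l\in\N}$.

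First, I would establish $R \subseteq \eL$ by producing each generator from $\iota(L) \cup L\s$ using operators of $\D$. Since $\iota(t) = tQ + X \in \iota(L) \subset \eL$ and $\partial/\partial t \in \D$, we obtain $Q = \partial \iota(t)/\partial t \in \eL$, hence $X = \iota(t) - tQ \in \eL$. The inverse $(tQ+X)^{-1} = \iota(t^{-1})$ lies in $\iota(L) \subset \eL$. From $\iota(y) = Y\sb 0 y$ with $y \in L\s^{*}$, we get $Y\sb 0 = y^{-1}\iota(y) \in \eL$, and iterating $\partial/\partial t$ yields every $\partial^{l} Y\sb 0/\partial t^{l} \in \eL$. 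Combined with $L\s \subset \K \subset \eL$, this gives $R \subseteq \eL$.

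For the reverse inclusion I would take $\tilde R$ to be the localization of $R$ at the multiplicative set generated by the $\iota$-images of nonzero elements of $\com[t, y] \subset L$. Then $\iota(L) \subset \tilde R$, because $L$ is the fraction field of $\com[t, y]$ and $\iota(t) = tQ+X$, $\iota(y) = Y\sb 0 y$ both lie in $R$ while their polynomial expressions are invertible in $\tilde R$ by construction. To conclude $\eL \subseteq \tilde R$ it then suffices to show $\tilde R$ is stable under every operator in $\D = \{\hat\Sigma,\, \hat\Theta^{*},\, \partial/\partial t,\, \partial/\partial y\}$. Stability under $\partial/\partial y$ is trivial, since none of $Q, X$, $Y\sb 0$ or its $t$-derivatives involves $y$. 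Stability under $\partial/\partial t$ is immediate from the definition of the generators, noting $\partial(tQ+X)^{-1}/\partial t = -Q(tQ+X)^{-2}$. For $\hat\Sigma$ one uses $\hat\Sigma(Q) = qQ$, $\hat\Sigma(X) = qX$, $\hat\Sigma((tQ+X)^{-1}) = q^{-1}(tQ+X)^{-1}$, and $\hat\Sigma(Y\sb 0) = q^{\alpha} Y\sb 0$ (derived from $\sigma(y) = q^{\alpha}y$ and $\hat\Sigma\iota = \iota\sigma$), together with the commutation $[\hat\Sigma,\, \partial/\partial t] = 0$ on $F(\N, L\s)[[X]]$, which follows directly from the coefficient-wise definitions.

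The main obstacle is closure under the $q$-skew derivations $\hat\Theta^{(i)}$. From \eqref{5.24e} I read off $\hat\Theta^{(1)}(Y\sb 0) = [\alpha]\sb q\, Y\sb 0 (tQ+X)^{-1}$, while direct calculation gives $\hat\Theta^{(1)}(Q) = 0$, $\hat\Theta^{(1)}(X) = 1$, and $\hat\Theta^{(1)}((tQ+X)^{-1}) = -q^{-1}(tQ+X)^{-2}$. The commutation $[\hat\Theta^{(1)},\, \partial/\partial t] = 0$, which I would verify directly from formula \eqref{a4.5}, reduces $\hat\Theta^{(1)}(\partial^{l} Y\sb 0/\partial t^{l})$ to $\partial^{l}(\hat\Theta^{(1)}(Y\sb 0))/\partial t^{l}$, and the ordinary Leibniz rule then rewrites this as a polynomial in the $\partial^{m} Y\sb 0/\partial t^{m}$ and $(tQ+X)^{-1}$. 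Higher-order $\hat\Theta^{(i)} = (\hat\Theta^{(1)})^{i}/[i]\sb q!$ are handled by iterating the $q$-Leibniz rule $\hat\Theta^{(1)}(ab) = \hat\Sigma(a)\hat\Theta^{(1)}(b) + \hat\Theta^{(1)}(a) b$. The delicate point is the non-commutative bookkeeping: since $QX = qXQ$, repeated application of $q$-Leibniz to monomials in $Q, X$ and the inverses $(tQ+X)^{-1}$ produces powers of $q$ and intertwined factors, and one must check that the only denominators ever introduced are powers of $tQ+X$, so that no element outside $\tilde R$ is required.
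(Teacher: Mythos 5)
Your proposal is correct and follows essentially the same route as the paper: both inclusions reduce to showing that the displayed ring (suitably localized) contains $\iota(t)=tQ+X$ and $Y_0=y^{-1}\iota(y)$ and is stable under $\mathcal{D}$, with the key computation being $\hat{\Theta}^{(1)}(Y_0)=[\alpha]_q\,Y_0(tQ+X)^{-1}$ (the paper's equation \eqref{5.24e}) combined with the commutation of $\hat{\Sigma}$ and $\hat{\Theta}^{(1)}$ with $\partial/\partial t$. You are somewhat more explicit than the paper about the localization and about closure under the higher $\hat{\Theta}^{(i)}$ (which the paper leaves to iterativity, $\hat{\Theta}^{(i)}=\frac{1}{[i]_q!}(\hat{\Theta}^{(1)})^i$ for $q$ not a root of unity), but the argument is the same.
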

\begin{proof} Since $\iota(t) = t Q + X$, as we have seen in the first exampe,  
\[
 L\s[Q, X]\left[\frac{\partial^{l}}{\partial t^{l}}Y\sb{0}\right]\sb{l\in\N} \subset \eL.
\]
We show that the ring
\[
L\s[Q, X]\left[\frac{\partial^{l}}{\partial t^{l}}Y\sb{0}\right]\sb{l\in\N}
\]
is closed under the operations $\Sigma, \Theta^{(i)}, \partial/\partial t$ and $\partial/\partial y$ of $F(\N, \, L\s)[[X]]$. Evidently the ring is closed under the last two operators. Since the operators $\Sigma$ and $\partial^{n} / \partial t^{n}$ operating on $F(\N, L\s)[[X]]$ mutually commute, it follows from \eqref{5.24a}
\[
 \Sigma\left( \frac{\partial^{n} Y\sb{0}}{\partial t^{n}} \right) = \frac{\partial^{n}}{\partial t^{n}} \Sigma(Y\sb{0}) = \frac{\partial^{n}}{\partial t^{n}}(q^{\alpha}Y\sb{0}) = q^{\alpha}\frac{\partial^{n} Y\sb{0}}{\partial t^{n}}
. \]
So the ring is closed under $\Sigma$. 
Similarly since the operators $\Theta^{(1)}$ and $\partial^{n} / \partial t^{n}$ mutually commute on $F(\N, L\s)[[X]]$,
\begin{align*}
\Theta^{(1)}\left( \frac{\partial^{n} Y\sb{0}}{\partial t^{n}} \right) 
&=\frac{\partial^{n}}{\partial t^{n}} \Theta^{(1)}(Y\sb{0})\\
&=\frac{1}{y}\frac{\partial^{n}}{\partial t^{n}} \Theta^{(1)}(Y\sb{0}y)\\
&=\frac{1}{y}\frac{\partial^{n}}{\partial t^{n}} \Theta^{(1)}(\iota(y))\\
&=\frac{1}{y}\frac{\partial^{n}}{\partial t^{n}} \iota(\theta^{(1)}(y))\\
&=\frac{1}{y}\frac{\partial^{n}}{\partial t^{n}} \iota \left( \frac{\sigma(y) - y}{(q-1)t}\right)\\
&=\frac{1}{y}\frac{\partial^{n}}{\partial t^{n}} \left( \frac{q^{\alpha}Y\sb{0}y - Y\sb{0}y}{(q-1)(tQ+X)}\right)\\
&=\frac{1}{y}\frac{\partial^{n}}{\partial t^{n}} \left( \frac{q^{\alpha}Y\sb{0} - Y\sb{0}}{(q-1)(tQ+X)}\right),
\end{align*}
which is an element of the ring. 
\end{proof}
Conjecture \ref{5.24j} arises from   {\it experience that if $q$ ls not a root of unity, 
 it is very hard to find a non-trivial algebraic relations among
 the partial derivatives} 
 $$
\frac{\partial^{n} Y\sb{0}}
{\partial t^{n}}\qquad
\text{\it  for } n \in \N 
$$
{ \it over $L\s$
   so that we could guess that there would be none.} 
\par 
In fact, assume that we could  prove our guess. Let $\varphi \colon \eL \rightarrow 
F(\N, A[[W\sb{1}, W\sb{2}]])[[X]]$ be an infinitesimal deformation of $\iota$. So as we have seen
\[
 Z\sb{0} = \varphi(Y\sb{0}) = \sum\sb{n=0}^{\infty} X^{n} \binom{\alpha}{n}\sb{q}(et)^{-n}Q^{\alpha-n}b(W\sb{1})
\]
with $b(W\sb{1}) \in A[[W\sb{1}]]$. 
%Since the observation says that  $\partial^{n} Y\sb{0}/\partial t^{n},\, n \in \N$ are algebraically independent over $L\s$, there is 
There would be no constraints among the partial derivatives 
$\partial^{n} b(W\sb{1})/\partial W\sb{1}^{n},\, n \in \N$ and hence we could choose any power series $b(W\sb{1}) \in A[[W\sb{1}]]$. 
\subsection{The functor $\NCF$ of non-commutative deformations}\label{11.23a}
We study the functor $\NCF(A)$ of non-commutative deformations 
\[
\NCF \colon (\NCA) \rightarrow (Set).
\]
For a not necessarily commutative $L\n$-algebra $A \in Ob(\NCA)$, let
\begin{equation}\label{9.13a}
 \varphi \colon \eL \rightarrow F(\N,
 \, A[[W\sb{1}, W\sb{2}]])[[X]] 
\end{equation}
be an infinitesimal deformation of the canonical morphism
\[
\iota \colon \eL \rightarrow F(\N, A[[W\sb{1},W\sb{2}]])[[X]]
. \]
Both $t$ and $y$ are elements of the field $\com(t,t^{\alpha} )= \com(t,y))$ so that $[t,y] = ty -yt = 0$. So for the deformation $\varphi \in \NCF(A)$ we must have 
\begin{equation}\label{6.4a}
[\varphi(t),\varphi(y)] = \varphi(t)\varphi(y) - \varphi(y)\varphi(t) = 0.
\end{equation}
When we consider the non-commutative deformations, the commutativity \eqref{6.4a} gives a constraint for the deformation. 
To see this we need a Lemma. 
\begin{lemma}\label{6.2g}
For every $l\in \N$, we have
\[
 q^{l}\binom{\alpha}{l}\sb{q} + \binom{\alpha}{l-1}\sb{q} = \binom{\alpha}{l} \sb q  + 
 q^{\alpha -l +1}\binom{\alpha}{l-1}\sb{q}. 
\]
\end{lemma}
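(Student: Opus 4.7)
The plan is to reduce the identity to the elementary recursive relation
\[
[l]\sb{q}\,\binom{\alpha}{l}\sb{q} \;=\; [\alpha-l+1]\sb{q}\,\binom{\alpha}{l-1}\sb{q},
\]
which is built into the definition of the $q$-binomial coefficient used in the paper. First I would move the terms containing $\binom{\alpha}{l}\sb{q}$ to the left and the terms containing $\binom{\alpha}{l-1}\sb{q}$ to the right, obtaining the equivalent identity
\[
(q^{l}-1)\,\binom{\alpha}{l}\sb{q} \;=\; (q^{\alpha-l+1}-1)\,\binom{\alpha}{l-1}\sb{q}.
\]
Dividing both sides by $q-1$ (which is nonzero under our standing assumption that $q$ is not a root of unity, in particular $q\neq 1$) turns this into exactly the displayed recursive relation above.

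Next I would verify that recursive relation directly from the definition
\[
\binom{\alpha}{l}\sb{q}=\frac{[\alpha]\sb{q}[\alpha-1]\sb{q}\cdots[\alpha-l+1]\sb{q}}{[l]\sb{q}!}.
\]
Multiplying through by $[l]\sb{q}$ cancels the last factor of $[l]\sb{q}$ from $[l]\sb{q}!$, giving $[l]\sb{q}\binom{\alpha}{l}\sb{q}=[\alpha-l+1]\sb{q}\binom{\alpha}{l-1}\sb{q}$, which is what we needed.

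There is essentially no obstacle here: the lemma is a one-line consequence of the definition of $\binom{\alpha}{l}\sb{q}$ after the algebraic rearrangement above. The only point requiring mild care is that the numerators $[\alpha]\sb{q},[\alpha-1]\sb{q},\ldots$ use the extended definition $[\beta]\sb{q}=(q^{\beta}-1)/(q-1)$ for $\beta\in\alpha+\Z$ recalled earlier in Section \ref{10.4b}, so the cancellation by $q-1$ is legitimate in the field $\com$; apart from this the identity is purely formal.
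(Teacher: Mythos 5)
Your proof is correct and follows the same route as the paper, which simply asserts that the identity "follows from the definition of the $q$-binomial coefficient"; your rearrangement to $[l]\sb{q}\binom{\alpha}{l}\sb{q}=[\alpha-l+1]\sb{q}\binom{\alpha}{l-1}\sb{q}$ is exactly the computation being alluded to. (For $l=0$ both sides degenerate under the convention $\binom{\alpha}{-1}\sb{q}=0$, so the identity is trivially true there as well.)
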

\begin{proof} 
This follows from the definition of q-binomial coefficient. 
\end{proof}
\begin{lemma}\label{9.13b}
Let $A$ be a not necessarily commutative $L\n$-algebra in $Ob\, (NCAlg/L\n)$. Let $e, \, f \in A$ such that $e-1$ and $f$ are nilpotent. We set $\mathcal{A} : = (e(t+W\sb{1}) + f)Q + X$ and for a power series $b(W\sb{1})\in A[[W\sb{1}]]$, we also set
\[
 \mathcal{Z} := \sum\sb{n=0}^{\infty} X^{n} 
  \binom{\alpha}{n}\sb{q}(e(t+W\sb{1})+f)^{-n}
 Q^{\alpha -n}b(W\sb{1})
\]
so that $\mathcal{A}$ and $\mathcal{Z}$ are elements of $F(\N,A[[W\sb{1}]])[[X]]$. 
The following conditions are equivalent. 
\begin{enumerate}
\renewcommand{\labelenumi}{(\arabic{enumi})}
\item $[\mathcal{A},\, \mathcal{Z}] := \mathcal{A}\mathcal{Z} - \mathcal{Z}\mathcal{A} = 0$. 
\item $[e(t + W\sb{1}) + f, \, b(W\sb{1})] = 0$. 
\end{enumerate}
\end{lemma}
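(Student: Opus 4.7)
\noindent\textbf{Proof proposal for Lemma \ref{9.13b}.}
Write $g := e(t+W\sb 1) + f \in A[[W\sb 1]]$ so that $\mathcal{A} = gQ + X$ and
$\mathcal{Z} = \sum\sb{n\ge 0} X^n \binom{\alpha}{n}\sb q g^{-n} Q^{\alpha-n} b(W\sb 1)$.
The plan is to compute both $\mathcal{A}\mathcal{Z}$ and $\mathcal{Z}\mathcal{A}$ inside the twisted power series ring and read off the commutator coefficient-wise in $X$.
The only commutation rules in play are: (i) $g$ and $b(W\sb 1)$, being elements of $A[[W\sb 1]]$, are constant functions in $F(\N,A[[W\sb 1]])$ and hence commute with $Q, Q^\alpha$ and all $Q^{\alpha-n}$; (ii) the twisted multiplication gives
$aX = X\,\Sigma(a)$, which specialises to $bX = Xb$, $gX = Xg$, and
$Q^{\alpha-n}X = q^{\alpha-n} X Q^{\alpha-n}$, since $\Sigma(Q^{\alpha-n}) = q^{\alpha-n}Q^{\alpha-n}$; (iii) the only possibly non-trivial commutator among scalars is $[g,b]$, since $e,f\in A$ need not commute with the coefficients of $b(W\sb 1)$.

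Armed with these rules, I expand. For $\mathcal{A}\mathcal{Z} = gQ\,\mathcal{Z} + X\mathcal{Z}$: using $gQ\cdot X^n = q^n X^n gQ$ and the commutations above, the coefficient of $X^n$ in $gQ\,\mathcal{Z}$ is $q^n\binom{\alpha}{n}\sb q g^{1-n}\, b\,Q^{\alpha-n+1}$, while from $X\mathcal{Z}$ (shift index) one gets $\binom{\alpha}{n-1}\sb q g^{1-n}\, b\,Q^{\alpha-n+1}$ for $n\ge 1$. Dually, $\mathcal{Z}\mathcal{A} = \mathcal{Z}\,gQ + \mathcal{Z}X$: moving $g$ past $b$ on the left produces $\binom{\alpha}{n}\sb q g^{-n}\,b\,g\,Q^{\alpha-n+1}$, and from $\mathcal{Z}X$ one obtains $q^{\alpha-n+1}\binom{\alpha}{n-1}\sb q g^{1-n} b\, Q^{\alpha-n+1}$ for $n\ge 1$. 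Subtracting, the coefficient of $X^n$ in $[\mathcal{A},\mathcal{Z}]$ for $n\ge 1$ is
\[
\binom{\alpha}{n}\sb q \bigl[q^n g^{1-n}b - g^{-n}bg\bigr]Q^{\alpha-n+1}
+\binom{\alpha}{n-1}\sb q\bigl[1-q^{\alpha-n+1}\bigr]g^{1-n}bQ^{\alpha-n+1},
\]
and for $n=0$ it is $(gb-bg)Q^{\alpha+1} = [g,b]Q^{\alpha+1}$.

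Now Lemma \ref{6.2g} rewrites as $\binom{\alpha}{n-1}\sb q(1-q^{\alpha-n+1}) = \binom{\alpha}{n}\sb q(1-q^n)$. Plugging this in, the $n$-th coefficient above collapses to
\[
\binom{\alpha}{n}\sb q\bigl[g^{1-n}b - g^{-n}bg\bigr]Q^{\alpha-n+1}
=\binom{\alpha}{n}\sb q g^{-n}[g,b]\,Q^{\alpha-n+1},
\]
which is also the correct expression at $n=0$. Hence
\[
[\mathcal{A},\mathcal{Z}] \;=\; \sum\sb{n\ge 0} X^n\,\binom{\alpha}{n}\sb q\, g^{-n}\,[g,b]\,Q^{\alpha-n+1}.
\]
The implication (2)$\Rightarrow$(1) is then immediate. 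For (1)$\Rightarrow$(2), the $X^0$-coefficient must vanish, giving $[g,b]\,Q^{\alpha+1}=0$; since $Q^{\alpha+1}$ is a unit in $F(\N,A[[W\sb 1]])$ (inverse $Q^{-(\alpha+1)}$), we conclude $[g,b]=0$.

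The main obstacle will be bookkeeping: making sure every transposition past $X$ picks up the correct factor of $q^{\alpha-n}$ and every transposition past $Q$ is trivial, and then recognising that the mismatch between the $\mathcal{A}\mathcal{Z}$ and $\mathcal{Z}\mathcal{A}$ coefficients is exactly the one erased by the $q$-Pascal identity of Lemma \ref{6.2g}. Once that identity is invoked, the commutator collapses to a multiple of $[g,b]$ as above, and the equivalence follows.
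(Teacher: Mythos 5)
Your proposal is correct and follows essentially the same route as the paper: expand $\mathcal{A}\mathcal{Z}-\mathcal{Z}\mathcal{A}$ coefficient-wise in $X$ using the twisted commutation rules, observe that the degree-zero coefficient is exactly $[g,b]Q^{\alpha+1}$ (giving (1)$\Rightarrow$(2)), and use the $q$-Pascal identity of Lemma \ref{6.2g} to see that condition (2) kills every higher coefficient (giving (2)$\Rightarrow$(1)). Your packaging of the whole commutator into the single closed form $\sum_{n} X^n\binom{\alpha}{n}_q g^{-n}[g,b]\,Q^{\alpha-n+1}$ is a tidy refinement of the paper's equations \eqref{6.2e}--\eqref{6.2f}, but the underlying computation is the same.
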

\begin{proof}
We formulate condition $(1)$ in terms of coefficients of the power series in $X$. Assume condition $(1)$ holds so that we have 
\begin{equation}\label{6.2d}
\begin{split}
((e(t+W\sb{1} + f))Q + X)\left( \sum\sb{n= 0}^{\infty}X^{n} \binom{\alpha}{n}\sb{q}(e(f+W\sb{1})+ f)^{-n}Q^{\alpha -n }b(W\sb{1}) \right)\\
=\left( \sum\sb{n= 0}^{\infty}X^{n} \binom{\alpha}{n}\sb{q}(e(f+W\sb{1})+ f)^{-n}Q^{\alpha -n }b(W\sb{1}) \right)((e(t+W\sb{1} + f))Q + X). 
\end{split}
\end{equation}
Comparing degree $l$ terms in $X$ of \eqref{6.2d}, we fined condition $(1)$ is equivalent to 
\begin{equation}\label{6.2e}
\begin{split}
&q^{l}\binom{\alpha}{l}\sb{q}(e(t+ W\sb{1})+ f)^{-l+1}Q^{\alpha -l+1}b(W\sb{1}) \\
&\hspace*{5em}
+ \binom{\alpha}{l-1}\sb{q}(e(t+W\sb{1})+f)^{-l+1}Q^{\alpha -l+1} b(W\sb{1})\\
&=
\binom{\alpha}{l}\sb{q}(e(t+ W\sb{1})+ f)^{-l}b(W\sb{1}) (e(t+ W\sb{1})+ f)Q^{\alpha -l +1}\\
&\hspace*{5em}
+ \binom{\alpha}{l-1}\sb{q}q^{\alpha -l+1}(e(t+W\sb{1})+f)^{-l+1}Q^{\alpha -l+1} b(W\sb{1}).
\end{split}
\end{equation}
That is equivalent to 
\begin{equation}\label{6.2f}
\begin{split}
&\hspace*{2em}q^{l}\binom{\alpha}{l}\sb{q}(e(t+ W\sb{1})+ f)b(W\sb{1}) \\
&\hspace*{5em}+ \binom{\alpha}{l-1}\sb{q}(e(t+W\sb{1})+f)b(W\sb{1})\hspace{5em}\\
&=\binom{\alpha}{l}\sb{q}b(W\sb{1}) (e(t+ W\sb{1})+ f)\\
&\hspace*{5em}+ \binom{\alpha}{l-1}\sb{q}q^{\alpha -l+1}(e(t+W\sb{1})+f)b(W\sb{1})
\end{split}
\end{equation}
for every $l \in \N$. Con dition \eqref{6.2f} for $l = 0$ is condition (2). Hence condition (1) implies condition (2). 
Conversely condition (1) follows from (2) in view of \eqref{6.2f} and Lemma \ref{6.2g}. 
\end{proof}
%\subsection{Non-commutative deformations $\NCF$}
Now let us come back to the infinitesimal deformation \eqref{9.13a}
%\[
 %\varphi \colon \eL \rightarrow F(\N,A[[W\sb{1},W\sb{2}]])[[X]]
%\]
of the canonical morphism $\iota$.
%\[
%\iota \colon \eL \rightarrow F(\N, L\n[[W\sb{1},W\sb{2}]])[[X]]\rightarrow F(\N, A[[W\sb{1},W\sb{2}]])[[X]].
%\]
The argument in Section \ref{10.4a} allows us to determine the restriction $\varphi$ on the subalgebra generated by $\iota(t) = tQ + X$
 over $L\s$ 
 invariant under the $\Theta^{(i)}$'s, 
 $\Sigma$ and $\{\partial/ \partial t, \partial/ \partial y \}$ in $F(\N, L\s)[[X]]$. So there exist $e, \, f \in A$ such that $ef= q^{-1}fe,\, e-1, \, f$ are nilpotent and such that
\[
 \varphi(Q) = eQ \,\,\text{ and } \varphi(X) = fX + Q,
\]
that are equations in $F(\N,A[[W\sb{1},W\sb{2}]])[[X]]$. 
In particular 
\[
 \varphi(t) = \varphi(tQ+X) = (et+f)Q+ X = (e(t+W\sb{1})+ f)Q + X,
\]
where we naturally identify rings
\[
F(\N,L\s)[[X]] \rightarrow F(\N, L\n[[W\sb{1},W\sb{2}]])[[X]] \rightarrow F(\N, A[[W\sb{1},W\sb{2}]])[[X]]
\]
through the canonical maps. \par
Then the argument in 
the commutative case allows us to show  that there exists a power series $b\sb{0}(W\sb{1}) \in A[[W\sb 1]]$ such that 
\[
 \varphi(Y\sb 0) = \sum\sb{n=0}^{\infty} X^{n} \binom{\alpha}{n}\sb{q}(e(t+ W\sb{1})+ f)^{-n}Q^{\alpha - n}b\sb{0}(W\sb{1}).
\]
such that all the voefficients of the powerseris $b\sb 0(W\sb 1)$ are nilpotent.
As we  deal with the not necessarily commutative algebra $A$, the commutation relation in $L$ gives a constraint. Namely since $\iota(y) = yY\sb{0}$ and $t y = y t$ in $L$ so that $\iota(t)\iota(y) = \iota(y)\iota(t)$, we get $\iota(t)(yY\sb{0}) = (yY\sb{0})\iota(t)$ in $\eL$ and 
$
\varphi (tQ + X ) \varphi (Y\sb 0) =\varphi (Y\sb 0) \varphi ( tQ + X)
$
.
So we consequently have 
\begin{equation}\label{6.8a}
\mathcal{A}\mathcal{Z}\sb 0 = \mathcal{Z}\sb{0}\mathcal{A} \qquad \text{ in } F(\N, A[[W\sb{1}, \, W\sb{2}]])[[X]], 
\end{equation}
setting 
\[
 \mathcal{A} : = e((t+W\sb{1}) + f)Q + X, \qquad \mathcal{Z}\sb 0 := \sum\sb{n=0}^{\infty} X^{n}  \binom{\alpha}{n}\sb{q}(e(t+W\sb{1})+f)Q^{\alpha -n}b\sb 0 (W\sb{1}).
\]
\begin{lemma}]\label{10.23a}
We have 
\[ [e(t + W\sb{1}) + f,\, b\sb 0 (W\sb{1})] = 0.
\]
\end{lemma}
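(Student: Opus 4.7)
The lemma is a direct application of Lemma~\ref{9.13b} to the pair $(\mathcal{A}, \mathcal{Z}_0)$ constructed just above, once equation~\eqref{6.8a} is in hand. I would structure the argument in two short steps.

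First I would verify equation~\eqref{6.8a}. The commutation $ty = yt$ in $L$ yields $\iota(t)\iota(y) = \iota(y)\iota(t)$ in $\eL$, and since $\iota(y) = Y\sb{0}y$, applying $\varphi$ gives $\mathcal{A}\cdot(\mathcal{Z}\sb{0}\,\varphi(y)) = (\mathcal{Z}\sb{0}\,\varphi(y))\cdot \mathcal{A}$ in $F(\N, A[[W\sb{1}, W\sb{2}]])[[X]]$. Because $\varphi$ agrees with $\iota$ on $\K \supset L\s$, we have $\varphi(y) = y$; and since $y \in L\n$ is a scalar, regarded as a constant function on $\N$ it is fixed by $\hat{\Sigma}$, and is therefore central in the twisted series ring (the twisting relation $aX = X\hat{\Sigma}(a)$ reduces to $yX = Xy$). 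Cancelling $y$ on both sides leaves $\mathcal{A}\mathcal{Z}\sb{0} = \mathcal{Z}\sb{0}\mathcal{A}$, which is exactly \eqref{6.8a}.

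Second, I would invoke Lemma~\ref{9.13b} with $b(W\sb{1}) := b\sb{0}(W\sb{1})$. All the hypotheses match: $e, f \in A$ with $e-1$ and $f$ nilpotent (in fact satisfying $ef = q^{-1}fe$, although only nilpotence is needed), and $\mathcal{A}, \mathcal{Z}\sb{0}$ have precisely the shape of the $\mathcal{A}, \mathcal{Z}$ of that lemma. Condition~(1) of Lemma~\ref{9.13b} is \eqref{6.8a}, so by the equivalence (1)$\Leftrightarrow$(2) established there, condition~(2) also holds, giving $[e(t + W\sb{1}) + f, \, b\sb{0}(W\sb{1})] = 0$.

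I expect no real obstacle: the genuine computation, namely expanding $\mathcal{A}\mathcal{Z}\sb{0} - \mathcal{Z}\sb{0}\mathcal{A}$ coefficient by coefficient in $X$ and reducing with the $q$-binomial identity of Lemma~\ref{6.2g}, is already packaged inside the proof of Lemma~\ref{9.13b}. The only subtlety worth stressing is the centrality of $y$ in $F(\N, A[[W\sb{1}, W\sb{2}]])[[X]]$, which is what legitimises the cancellation in the first step and which relies crucially on $y \in L\n$ being $\hat{\Sigma}$-fixed as a constant function.
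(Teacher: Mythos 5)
Your proposal is correct and follows the paper's own route: the paper establishes \eqref{6.8a} in the text immediately preceding the lemma (from $ty=yt$, $\iota(y)=yY\sb 0$ with $y$ central as a $\hat{\Sigma}$-fixed constant function in the image of $L\n$, hence cancellable) and then proves the lemma by citing \eqref{6.8a} together with the equivalence (1)$\Leftrightarrow$(2) of Lemma~\ref{9.13b}. You have merely folded the derivation of \eqref{6.8a} into the proof itself, with the centrality of $y$ correctly justified.
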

\begin{proof}
This follows from \eqref{6.8a} and Lemma 
\ref{9.13b}. 
\end{proof}
\begin{definition}\label{10.19a}
We define a functor 
\[
 QG\sb{II\, q} \colon (\NCA) \rightarrow (Set)
\]
by putting 
\begin{multline*}
 QG\sb{II \, q}(A) 
  = \{ ( \begin{bmatrix}
 e & f\\
 0 & 1
 \end{bmatrix}, \, b(W\sb{1}) 
 ) \in M\sb {2} (A)\times A[[W\sb 1]]  
\,  |\, 
 e, \, f \in A,\,  ef = q^{-1}fe, \\
 e \text{\it  is invertible in } A , \, 
  b(W\sb 1 ) \in A[[W\sb 1]],\, [e(t+W\sb{1}) + f, \, \, b(W\sb{1})] = 0
 \, 
\} 
\end{multline*}
%\end{definition}
%\end{document}
for $A\in Ob\,(NCAlg/l\n).$ 
\par The functor  $QG\sb {II\, q}$ is almost 
a quantum group.   
We also need the formal completion $\widehat{QG}\sb {II\, q}$  of the quantum  group functor $QG\sb{II\, q}$ so that 
$$
\widehat{QG}\sb {II\, q}:(NCAlg/L\n) \to (Set)
$$
is given by 
\begin{multline*}
\widehat{ QG}\sb{II \, q}(A) 
= \{ ( \begin{bmatrix}
 e & f \\
 0 & 1
 \end{bmatrix}, \, b(W\sb{1})) \in QG\sb {II\, q}(A) \,  \\ 
 |\,  
 $e-1,\,  f$ \text{ and all the coefficients of } 
 b(W\sb 1 ) \text { are nilpotent}
 \} 
\end{multline*}
for $A\in  Ob (NCAlg/L\n).$
\end{definition}
Using Definition \ref{10.19a}, we
have shown the following 
\begin{proposition}\label{6.8i}
There exists a functorial inclusion 
\[
 \NCF(A) \hookrightarrow \widehat{QG}\sb{II \, q}(A)
\]
 sending $\varphi \NCF(A)$ to 
\[
  ( \begin{bmatrix}
 e & f\\
 0 & 1
 \end{bmatrix},\, b\sb 0 (W\sb{1}) )\in \widehat{QG}\sb{II \, q}(A). 
\]
\end{proposition}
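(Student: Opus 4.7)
The plan is essentially to assemble the pieces already established in Sections \ref{10.4a} and \ref{11.23a} into a single well-defined, injective, functorial assignment. Given any $A \in Ob\,(NCAlg/L\n)$ and any $\varphi \in \NCF(A)$, first I would apply the analysis of the first example to $\varphi$ restricted to the subalgebra of $\eL$ generated over $L\s$ by $\iota(t) = tQ+X$: this produces $e,f\in A$ with $e-1$ and $f$ nilpotent, $ef=q^{-1}fe$, $\varphi(Q)=eQ$, $\varphi(X)=X+f$, and hence $\varphi(t)=(e(t+W\sb 1)+f)Q+X$. Then I would invoke the commutative-case computation carried out in Section \ref{1027a} (which is purely formal and goes through verbatim in the non-commutative setting once $\varphi(Q)$ and $\varphi(t)$ are known) to obtain a power series $b\sb 0(W\sb 1)\in A[[W\sb 1]]$, with $b\sb 0(W\sb 1)-1$ having nilpotent coefficients, such that
\[
\varphi(Y\sb 0)\;=\;\sum\sb{n=0}^{\infty}X^{n}\binom{\alpha}{n}\sb q(e(t+W\sb 1)+f)^{-n}Q^{\alpha-n}b\sb 0(W\sb 1).
\]
This produces the triple $\bigl(\bigl[\begin{smallmatrix}e & f\\ 0 & 1\end{smallmatrix}\bigr],\,b\sb 0(W\sb 1)\bigr)$ that the statement asks for.

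Next I would verify that this triple lies in $\widehat{QG}\sb{II\, q}(A)$. The relations $ef=q^{-1}fe$ and the nilpotency of $e-1$, $f$ and of every coefficient of $b\sb 0(W\sb 1)-1$ are already built into the previous step; invertibility of $e$ is automatic from nilpotency of $e-1$. The remaining defining condition $[e(t+W\sb 1)+f,\,b\sb 0(W\sb 1)]=0$ is exactly the content of Lemma \ref{10.23a}, applied to $\mathcal{A}=\varphi(t)$ and $\mathcal{Z}\sb 0=\varphi(Y\sb 0)$, using the commutativity $[\varphi(t),\varphi(y)]=0$ that is forced by $ty=yt$ in $L$.

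To see that the assignment $\varphi\mapsto(e,f,b\sb 0(W\sb 1))$ is an \emph{inclusion} (i.e.\ injective), I would check that this data determines $\varphi$ on all of $\eL$. By Lemma \ref{5.24n}, $\eL$ is a localization of $L\s[Q,X,(tQ+X)^{-1}][\partial^{\ell}Y\sb 0/\partial t^{\ell}]\sb{\ell\in\N}$. A $\mathcal{D}$-morphism $\varphi$ compatible with $\partial/\partial t$, $\partial/\partial y$, $\Sigma$, $\Theta^{*}$ is therefore determined once $\varphi(Q),\varphi(X)$, and $\varphi(Y\sb 0)$ are prescribed; and these are recovered from $(e,f,b\sb 0(W\sb 1))$ by the formulas above. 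Finally functoriality in $A$ is immediate: any morphism $A\to A'$ of $L\n$-algebras in $(NCAlg/L\n)$ transports $(e,f,b\sb 0(W\sb 1))$ to its image componentwise, and this transport is compatible with the pushforward of $\varphi$.

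The only point where real work is hidden is the commutative-case computation yielding the explicit form of $\varphi(Y\sb 0)$ with a single unknown $b\sb 0(W\sb 1)\in A[[W\sb 1]]$; there one must replay, in the non-commutative setting, the recurrence \eqref{11.16a} derived from $(q^{\alpha}-1)Z\sb 0=\Theta^{(1)}(Z\sb 0)(q-1)(e(t+W\sb 1)+f+X)$ and the vanishing of $\partial/\partial W\sb 2$. I expect this to be the main potential obstacle, since one must be careful about the order of multiplication by the non-central factors $(e(t+W\sb 1)+f)^{-1}$ and $b\sb 0(W\sb 1)$; but the recurrence is still diagonal in the $X$-grading, so the same induction yields the closed form. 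Once that is granted, the remaining verifications are formal.
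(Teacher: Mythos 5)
Your proposal follows essentially the same route as the paper: restrict $\varphi$ to the subalgebra generated by $\iota(t)$ to extract $e,f$ via the first example, replay the commutative computation to get $b\sb 0(W\sb 1)$, and use the relation $ty=yt$ together with Lemma \ref{9.13b} (i.e.\ Lemma \ref{10.23a}) to verify the commutator condition defining $\widehat{QG}\sb{II\,q}(A)$. Your explicit injectivity check via Lemma \ref{5.24n} and your flagging of the ordering subtlety in the non-commutative recurrence are welcome additions, but they do not change the argument, which matches the paper's.
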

We show that $\widehat{QG}\sb{II \, q}$ is a quantum fodmal group over $L\n$. In fact, we take two elements 
\[
(G,\, \xi(W\sb{1})) =
  ( \begin{bmatrix}
 e & f\\
 0 & 1
 \end{bmatrix},\,  \xi (W\sb{1}) ),\qquad  
(H,\, \eta(W\sb{1})) =
  ( \begin{bmatrix}
 g & h\\
 0 & 1
 \end{bmatrix}, \,  \eta (W\sb{1}) )
\]
of $\widehat{QG}\sb{II \, q}(A)$ so that $e,\,f, g,\, h \in A$ so that 
\[
 ef = q^{-1}fe, \qquad  gh= q^{-1}hg,
\]
the elements $ e$ and $g$ are invertible  and such that
\begin{equation}\label{6.6e}
[e(t + W\sb{1}) + f, \, \xi(W\sb{1})] = 0, \qquad [g(t+W\sb{1}) + h, \, \eta(W\sb{1})] = 0.
\end{equation}
When the following two subsets  of the ring  $A$ 
\begin{enumerate}
\renewcommand{\labelenumi}{(\arabic{enumi})}
\item
$\{e, f \} \cup $ 
 ( the subset of all the coefficients of the power series $\xi(W\sb{1}))$, 
 \item 
$\{g,\,  h \}\cup $( 
the subset of 
all the coefficients of the 
power series $\eta(W\sb{1}))$, 
\end{enumerate}
 %?\which are two subsets of $A$, 
  are mutually commutative, 
we define the product of $(G, \, \xi(W\sb{1}))$ and $(H, \, \eta(W\sb{1}))$ by
\[
(G, \xi(W\sb{1}))\star(H, \, \eta(W\sb{1})) = (GH,\, \xi(gW\sb{1} + (g-1)t + h)\eta(W\sb{1})).
\]
%which are subsets of $A$, are mutually commutative, 

\begin{lemma}
The product $(GH,\, \xi(gW\sb{1} + (g-1)t + h)\eta(W\sb{1}))$ is indeed an element of $\widehat{QG}\sb{II \, q}(A)$. 
\end{lemma}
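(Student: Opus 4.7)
The plan is to unpack the product $(GH,\,\xi(gW\sb{1}+(g-1)t+h)\eta(W\sb{1}))$ and verify, in order, each of the defining conditions for membership in $\widehat{QG}\sb{II\, q}(A)$: (a) the matrix $GH$ has invertible $(1,1)$-entry and its two non-trivial entries satisfy the $q$-commutation $ab=q^{-1}ba$, (b) $(GH)\sb{11}-1$ and $(GH)\sb{12}$ are nilpotent, (c) the coefficients of the substituted product of power series are nilpotent, and (d) the compatibility
\[
[\,(GH)\sb{11}(t+W\sb{1})+(GH)\sb{12},\;\xi(gW\sb{1}+(g-1)t+h)\eta(W\sb{1})\,]=0
\]
holds. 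A direct multiplication gives
\[
GH=\begin{bmatrix} eg & eh+f \\ 0 & 1 \end{bmatrix}.
\]

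For (a), I would expand both $(eg)(eh+f)$ and $(eh+f)(eg)$ using only the two intra-block relations $ef=q^{-1}fe$ and $gh=q^{-1}hg$, while moving any letter of $\{g,h\}$ freely past any letter of $\{e,f\}$ using the mutual commutativity hypothesis. Both products reduce to $e^{2}gh+q^{-1}feg$ up to a single factor of $q$, which yields $(eg)(eh+f)=q^{-1}(eh+f)(eg)$. Invertibility of $eg$ is immediate because $e$ and $g$ are commuting invertibles.

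The main step is (d). The key observation is that the substitution $W\sb{1}\mapsto gW\sb{1}+(g-1)t+h$ carries $e(t+W\sb{1})+f$ precisely to $eg(t+W\sb{1})+eh+f$. Because $g$ and $h$ commute with every letter in $\{e,f\}\cup\{\text{coefficients of }\xi\}$ by hypothesis, this substitution extends to a well-defined algebra homomorphism on the subalgebra of $A[[W\sb{1}]]$ generated by those letters and $W\sb{1}$, so the defining relation $[e(t+W\sb{1})+f,\xi(W\sb{1})]=0$ pulls back to the vanishing of $[eg(t+W\sb{1})+eh+f,\,\xi(gW\sb{1}+(g-1)t+h)]$. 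For the $\eta$-factor, starting from $[g(t+W\sb{1})+h,\eta(W\sb{1})]=0$ I would multiply on the left by $e$ (legitimate because $e$ commutes with every coefficient of $\eta$) and add the trivial relation $[f,\eta(W\sb{1})]=0$, obtaining $[eg(t+W\sb{1})+eh+f,\eta(W\sb{1})]=0$. The Leibniz rule for commutators then yields (d).

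For (b), $eg-1=e(g-1)+(e-1)$ is a sum of two commuting nilpotents, hence nilpotent; for $eh+f$, a short check from $ef=q^{-1}fe$ and the mutual commutativity gives $(eh)f=q^{-1}f(eh)$, and a $q$-binomial expansion then shows the sum is nilpotent. For (c), the substituted argument $gW\sb{1}+(g-1)t+h$ has nilpotent constant term and nilpotent linear coefficient, so each coefficient of $\xi(gW\sb{1}+(g-1)t+h)$ lies in the nilpotent ideal generated by $e-1,f,g-1,h$ and the coefficients of $\xi,\eta$; multiplying by $\eta(W\sb{1})$ keeps us in the same ideal. I expect the main technical point to be the substitution argument inside step (d): one must verify that the mutual commutativity hypothesis is exactly what is needed to legitimize the substitution in a non-commutative ring. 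Everything else is bookkeeping.
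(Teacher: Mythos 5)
Your proposal is correct and follows essentially the same route as the paper: the decisive point in both is the identity $e\bigl(gW_1+(g-1)t+h\bigr)+et+f=eg(t+W_1)+eh+f$ (your ``substitution carries $e(t+W_1)+f$ to $eg(t+W_1)+eh+f$'' is exactly the paper's equation for reducing the commutator condition to the two hypotheses and mutual commutativity), combined with the earlier matrix lemma for $GH\in\gH_q(A)$. You are in fact somewhat more thorough than the paper, which does not spell out the nilpotency verifications for $eg-1$, $eh+f$ and the coefficients of the substituted series.
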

\begin{proof}
First of all, we notice that
the constant term  $(g-1)t$ of 
 the linear polynomial in $W\sb 1$ 
\begin{equation}\label{10.22a}
gW\sb{1} + (g-1)t + h
\end{equation}
is nilpotent so that we can substitute 
\eqref{10.22a}
into the power series $\xi (W\sb 1 )$. Therefore 
 $$
 \xi(gW\sb{1} + (g-1)t + h)\eta(W\sb{1})
 $$
is a well-determined element of the power series ring 
$A[[W\sb 1]]$.
We have seen in Section \ref{10.4a} that if 
  $\{ e, \, f \}$
  and $\{g, \, h \} $
  are mutually commutative, then  
the product $GH$ of matrices $G, \, H \in \gH\sb{q \, L\n}(A)$ 
 is in 
 $\gH\sb{q \, L\n}(A)$. 
 Since 
$GH = \begin{bmatrix}
 eg & eh + f\\
 0 & 1
\end{bmatrix}$, it remains  to show 
\begin{equation}\label{6.6a}
[eg(t+W\sb{1})+ eh +f, \, \xi(gW\sb{1}+ (g-1)t + h )\eta(W\sb{1})] = 0. 
\end{equation}
The proof of \eqref{6.6a} is done in several steps. \par
First we show 
\begin{equation}\label{6.6x}
[\xi(gW\sb{1}+ (g-1)t + h ), \,  \eta(W\sb{1})]= 0. 
\end{equation}
This follows, in fact, from the mutual commutativity 
of the subsets (1) and (2) above, and the second equation of \eqref{6.6e}. \par
Second we show 
\begin{equation}\label{6.6g}
[eg(t+W\sb{1})+ eh +f,\,  \xi(gW\sb{1}+ (g-1)t + h )] = 0.
\end{equation}
%This is a consequence of 
%the firt equation of \eqref{6.6e} and the mutual commutativity of the sets (1) and (2). 
To this end, we notice
\begin{equation}\label{6.6c}
eg(t+W\sb{1})+ eh +f = e(gW\sb{1}+ (g-1)t + h) + et + f.
\end{equation}
So we have to show 
\begin{equation}
[e(gW\sb{1}+ (g-1)t + h) + et + f, \, 
\xi(gW\sb{1}+ (g-1)t + h )]=0.
\end{equation}
This follows from 
the first equation of \eqref{6.6e}
and the mutual commutativity of the subsets (1) and (2).. \par
We prove third 
\begin{equation}\label{6.8d}
[eg(t+W\sb{1})+ eh +f, \, \eta(W\sb{1})]=0 .
\end{equation}
In fact, this a consequence of the second equation of 
\eqref{6.6e} and the mutual commutativity of the subsets (1) and (2).
%\eqref{6.6c} and \eqref{6.6b}. \par
Equality  \eqref{6.6a} is a consequence of \eqref{6.6x}, \eqref{6.6g} and \eqref{6.8d}. 
\end{proof}
One can check associativity for the multiplication by a direct calculation. The unit element is given by 
\[
(I\sb{2}, 1) \in \widehat{QG}\sb{II \, q}(L\n). 
\]
The antipode is given by the formula below. 
For an element 
$$
(G, \, b(W\sb 1) ) =
( \begin{bmatrix}
e & f \\ 
0 & 1
\end{bmatrix}, \, b(W\sb 1) )
\in \widehat{QG}\sb{II\, q}(A),               
$$
 we set 
$$
(G, \, b(W\sb 1 )  )^{-1}: = ( 
\begin{bmatrix}
e^{-1} & -e^{-1}f\\
0& 1
\end{bmatrix}, \,
b(e^{-1}W\sb 1) + (e^{-1}t -e^{-1}f) ^{-!}b(W\sb 1)
 ) \in \widehat{QG}\sb{II\, q^{-1}}(A), 
$$
then we have 
$$
(G, \, b(W\sb 1) )^{-1}\star (G, \, b(W\sb 1) ) =
(G, \, b(W\sb 1) )\star (G, \, b(W\sb 1) )^{-1} = (I\sb 2 , \, 1 ).
$$
\begin{conjecture}\label{6.8j}
If $q$ is not a root of unity, 
the injection in Proposition \ref{6.8i} is an equality for every $A \in Ob(CAlg/L\n)$. 
\end{conjecture}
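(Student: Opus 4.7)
The plan is to reverse-engineer an infinitesimal deformation $\varphi \in \NCF(A)$ from each $(G, b(W\sb 1)) \in \widehat{QG}\sb{II\, q}(A)$, thereby inverting the inclusion of Proposition \ref{6.8i}. Taken literally, Conjecture \ref{6.8j} concerns only commutative $A$, in which case $ef = q^{-1}fe$ together with $ef = fe$ and the invertibility of $e$ force $f = 0$, the commutation condition $[e(t+W\sb 1)+f, b(W\sb 1)] = 0$ becomes vacuous, and the assertion reduces to Conjecture \ref{5.24j}; the construction below is written uniformly so as to cover the noncommutative extension as well. Writing $G = \begin{bmatrix} e & f \\ 0 & 1 \end{bmatrix}$, I define a candidate $\varphi$ on the generators of $\eL$ furnished by Lemma \ref{5.24n}: let $\varphi|\sb{L\s}$ be the canonical Taylor composition into $A[[W\sb 1, W\sb 2]]$, $\varphi(Q) = eQ$, let $\varphi$ send $\iota(t) = tQ + X$ to $(e(t+W\sb 1)+f)Q + X$, and set
\[
\varphi(Y\sb 0) := \sum\sb{n=0}^\infty X^n \binom{\alpha}{n}\sb q (e(t+W\sb 1)+f)^{-n} Q^{\alpha - n} b(W\sb 1),
\]
extending by $\varphi(\partial^l Y\sb 0/\partial t^l) := \partial^l \varphi(Y\sb 0)/\partial W\sb 1^l$ for every $l \in \N$.

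To verify that $\varphi$ is a well-defined $\K$-algebra morphism compatible with $\mathcal{D}$, I perform three checks. First, the twisted commutation $QX = qXQ$ is preserved because $ef = q^{-1}fe$, via the computation already carried out in Sublemma \ref{5.15}. Second, the commutativity $[\iota(t), \iota(y)] = 0$ in $\eL$ is preserved exactly when $[e(t+W\sb 1)+f, b(W\sb 1)] = 0$, which is the content of Lemma \ref{9.13b} applied to $\mathcal{A} = \varphi(\iota(t))$ and $\mathcal{Z} = \varphi(\iota(y))/(y+W\sb 2)$; the required identity is our hypothesis. Third, compatibility with $\Sigma$, with the derivations $\Theta^{(i)}$, and with $\partial/\partial t, \partial/\partial y$ on $Y\sb 0$ and its $t$-derivatives follows from direct computation using the $q$-binomial identity of Lemma \ref{6.2g} and the recurrence $b\sb{m+1} = ([\alpha-m]\sb q/[m+1]\sb q(e(t+W\sb 1)+f)) b\sb m$ already derived in Section \ref{1027a}. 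Congruence to $\iota$ modulo nilpotents is immediate from the nilpotency of $e-1$, $f$, and the coefficients of $b(W\sb 1) - 1$.

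The central obstacle is the hidden-relation problem: to extend $\varphi$ consistently from this generating set to all of $\eL$, one must know that there are no algebraic relations among $\{\partial^l Y\sb 0/\partial t^l\}\sb{l\in\N}$ over $L\s$ beyond those forced by the operator-compatibilities. Any such hidden relation would translate into a further constraint on $b(W\sb 1)$ beyond the commutation requirement, potentially shrinking $\NCF(A)$ strictly inside $\widehat{QG}\sb{II\, q}(A)$. This is precisely Conjecture \ref{5.24j}, which the authors themselves leave open and on which Conjecture \ref{6.8j} evidently rests. I would attack it via a $q$-analogue of classical algebraic-independence theorems for hypergeometric-type series: exploiting that $q$ is not a root of unity (so that the eigenvalues $q^{\alpha-n}$ of $\hat{\Sigma}$ on the coefficient $b\sb n Q^{\alpha-n}$ are pairwise distinct) together with the irrationality of $\alpha$, one aims to prove that $\{\partial^l Y\sb 0/\partial t^l\}\sb{l\in\N}$ generates a free polynomial algebra over $L\s$. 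Granted such a non-existence-of-relations theorem, the candidate $\varphi$ extends uniquely and consistently to all of $\eL$, furnishing the desired preimage and completing the equality $\NCF(A) = \widehat{QG}\sb{II\, q}(A)$.
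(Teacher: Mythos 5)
The statement you are trying to prove is a \emph{conjecture}: the paper offers no proof of it, only the heuristic in Remark \ref{5.24l} and Lemma \ref{5.24n} that ``it is very hard to find a non-trivial algebraic relation among the partial derivatives $\partial^n Y_0/\partial t^n$ over $L\s$, so that we could guess that there would be none.'' Your construction of the candidate deformation --- prescribing $\varphi$ on $Q$, $X$, $Y_0$ and its $t$-derivatives, checking the twisted relation $QX=qXQ$ via Sublemma \ref{5.15}, and checking $[\iota(t),\iota(y)]=0$ via Lemma \ref{9.13b} --- is a faithful reconstruction of the computations the paper already carries out in Sections \ref{1027a} and \ref{11.23a} to establish the \emph{inclusion} of Proposition \ref{6.8i}. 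But the converse direction, which is the whole content of the conjecture, is exactly the step you label ``the central obstacle,'' and you do not close it: you reduce the conjecture to the assertion that $\{\partial^l Y_0/\partial t^l\}_{l\in\N}$ generates a free algebra over $L\s$, which is precisely the open independence problem underlying Conjecture \ref{5.24j}. A conditional reduction of one open conjecture to another is not a proof.

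Moreover, the one concrete line of attack you sketch does not work as stated. The pairwise-distinct eigenvalues $q^{\alpha-n}$ of $\hat{\Sigma}$ separate the $X$-coefficients $b_nQ^{\alpha-n}$ \emph{within a single} series $Z_0$ (this is how the paper pins down the shape of $\varphi(Y_0)$), but the objects whose independence you need are the derivatives $\partial^l Y_0/\partial t^l$ for varying $l$, and these all lie in the \emph{same} $\hat{\Sigma}$-eigenspace: the proof of Lemma \ref{5.24n} shows $\Sigma(\partial^n Y_0/\partial t^n)=q^{\alpha}\,\partial^n Y_0/\partial t^n$ for every $n$. So eigenvalue separation under $\hat{\Sigma}$ cannot distinguish them, and some genuinely new input (a $q$-analogue of an algebraic-independence theorem, as you say, but actually proved) would be required. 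There is also a minor mismatch of scope worth flagging: as literally stated the conjecture quantifies over $A\in Ob(CAlg/L\n)$, where commutativity forces $f=0$ and kills the commutation constraint, so that the statement collapses to Conjecture \ref{5.24j}; the paper's later use of it (the summary in Section \ref{10.4b} and the implication ``Conjecture \ref{6.8j} implies Conjecture \ref{5.24j}'') shows the intended quantification is over $(NCAlg/L\n)$. Your proposal handles both readings uniformly, which is fine, but neither reading is actually proved.
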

\begin{proposition}
Conjecture \ref{6.8j} implies Conjecture \ref{5.24j}
\end{proposition}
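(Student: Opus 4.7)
The plan is to show that, when $A$ is commutative and $q$ is not a root of unity, the quantum formal group functor $\widehat{QG}\sb{II\, q}$ of Definition \ref{10.19a} collapses naturally onto $\hat{G}\sb{II}$, and that, under this collapse, the injection of Proposition \ref{6.8i} coincides with the injection of Proposition \ref{5.24i}. Granted this, Conjecture \ref{6.8j} (surjectivity of the former on $(CAlg/L\n)$) translates directly into Conjecture \ref{5.24j} (surjectivity of the latter).

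The first step is the collapse of $\widehat{QG}\sb{II\, q}$ on the commutative subcategory. Let $A$ be a commutative $L\n$-algebra and take $\left(\begin{bmatrix} e & f \\ 0 & 1\end{bmatrix},\, b(W\sb 1)\right) \in \widehat{QG}\sb{II\, q}(A).$ The defining relation $ef = q^{-1}fe$, combined with the commutativity $ef = fe$ in $A$, yields $(1 - q^{-1})ef = 0$. Since $e-1$ is nilpotent, $e$ is a unit in $A$; since $q$ is not a root of unity, the scalar $1 - q^{-1}$ is non-zero in $\com$. Hence $f = 0$. The commutation constraint $[e(t + W\sb 1) + f,\, b(W\sb 1)] = 0$ in Definition \ref{10.19a} is then automatic because $A[[W\sb 1]]$ is commutative. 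This yields a natural isomorphism of functors on $(CAlg/L\n)$:
\[
\widehat{QG}\sb{II\, q}|\sb{(CAlg/L\n)} \xrightarrow{\sim} \hat{G}\sb{II},\qquad
\left(\begin{bmatrix} e & 0 \\ 0 & 1\end{bmatrix},\, b(W\sb 1)\right) \longmapsto (e,\, b(W\sb 1)).
\]

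The second step is to verify that this isomorphism intertwines the two injections. Both injections arise from the same universal calculation: the image of a deformation $\varphi$ is read off from $\varphi(Q) = eQ$, $\varphi(X)$, and the power-series expansion of $\varphi(Y\sb 0)$. In the non-commutative formula of Section \ref{11.23a}, the element $f$ appears through $\varphi(X)$ and through the factor $(e(t + W\sb 1) + f)^{-n}$ in the expansion of $\varphi(Y\sb 0)$. When $A$ is commutative, Step 1 forces $f = 0$, so that expansion specializes verbatim to the commutative formula of Section \ref{1027a}. Combined with the definitional identity $\mathcal{NCF}\sb{L/k}|\sb{(CAlg/L\n)} = \mathcal{CF}\sb{L/k}$, this shows that the commutative-restricted injection of Proposition \ref{6.8i} is identified with that of Proposition \ref{5.24i} under the isomorphism of Step 1.

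Given the two steps, the implication Conjecture \ref{6.8j} $\Rightarrow$ Conjecture \ref{5.24j} is immediate: surjectivity on $(CAlg/L\n)$ of the non-commutative injection transports to surjectivity of the commutative injection via the natural isomorphism. The only point requiring care is the compatibility in Step 2, but this amounts to a direct inspection of the explicit expansions already derived in Sections \ref{1027a} and \ref{11.23a}, so no genuine obstacle arises.
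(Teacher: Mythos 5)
Your proof is correct and follows essentially the same route as the paper: the key point in both is that for commutative $A$ the commutation constraint $[e(t+W\sb 1)+f,\,b(W\sb 1)]=0$ in the definition of $\widehat{QG}\sb{II\,q}$ is vacuous, so an element of $\hat{G}\sb{II}(A)$ lifts to $\widehat{QG}\sb{II\,q}(A)$ and Conjecture \ref{6.8j} then produces the required deformation. Your additional observation that commutativity together with $ef=q^{-1}fe$ and the invertibility of $e$ forces $f=0$, so that the restriction of $\widehat{QG}\sb{II\,q}$ to $(CAlg/L\n)$ is actually isomorphic to $\hat{G}\sb{II}$, is a mild sharpening of what the paper states but not a different argument.
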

\begin{proof}
Let us assume Conjecture \ref{6.8j}. Take an element $(e,\,  \xi(W\sb{1}))\in \hat{G}\sb{II}(A)$ for $A \in Ob\, (Alg/L\n)$. 
Since $A$ is commutative, the commutation relation 
in Lemma \ref{10.23a}
imposes no condition on $\xi(W\sb{1})$, $(e, \, \xi(W\sb{1}))\in \widehat{QG}\sb{II \, q}(A)$. Conjecture \ref{6.8j} says that 
if $q$ is not a root of unity,  
 $(e, \, \xi(W\sb{1}))$ arise from an infinitesimal deforma
\[
 \iota \colon \eL \rightarrow F(\N,\, A[[W\sb{1},W\sb{2}]])[[X]]. 
\]
\end{proof}
Conjecture \ref{6.8j} says that we can identify the functor $\NCF$ with the quantum formal group $\widehat{QG}\sb{I\, qI}$. 
 To be more precise, 
the argument in the first Example studied in \ref{10.4a}, allows us to
define a formal $\com$-Hopf algebra $\hat{\mathfrak{I}}\sb q$ 
and hence 
$$
\hat{\mathfrak{I}}\sb{q\, L\n}:= \widehat{\mathfrak{I}}\sb q \hat{\otimes} \sb \com L\n, 
$$
%as well as its formal completion $\hat{\gI}\sb{ q\, L\n}$
which  is a functor on the category $(NCAlg/L\n )$\
so that we have a functoril isomorphism 
$$
\hat{\gI}\sb {q\, L\n}(A) \simeq \widehat{QG}\sb{II\, q}(A)
\qquad \text{for every } L\n\text{-algebra }A 
\in Ob\, (NCAlg/L\n).
$$ 
\subsection{Summary on the Galois structures of the field  extension $\com(t,\, t^{\alpha})/\com$ }
Let us summarize our results on the  $(\com(t, \, t^{\alpha})/\com)$. 
\begin{enumerate}
\renewcommand{\labelenumi}{(\arabic{enumi})}
\item Difference field extension $(\com(t,\, t^{\alpha}),\, \sigma)/\com$. This is a Picard-Vessiot extension with Galois group $\G\sb{m\, \com} \times \G\sb{m\,\com}$. 
\item Differential field extension $(\com(t,\, t^{\alpha}),\, d/dt)/\com$. This is not a Picard-Vessiot extension. The Galois group 
$$
\infgal(L/k)\colon (CAlg/L\n ) \to (Grp)
$$
 is isomorphic to $\hat{\G}\sb{m\, L\n} \times \hat{\G}\sb{a \,L\n}$, 
 where $\hat{\G}\sb{m L\n}$ and 
 $\hat{\G}\sb {a L\n}$ are formal completion of the multiplicative group and the additive group. 
 So as group functors on the category 
 $(CAlg/L\n)$,  we have  
 $$ 
 \hat{\G }\sb{ m L\n}(A) =\{ b\in A \, | \, b-1 \text{\it\/ is nilpotent} \}
 $$
 and 
 $$
 \hat{\G }\sb{ a L\n}(A) =\{ b\in A \, | \, b \text{\it \/ is nilpotent} \}
  $$   
  for a commutative $L\n$-algebra $A$.
  
\item Commutative deformation of \qsi extension $(\com(t,\, t^{\alpha}),\, \sigma, \, \theta^{\ast})/\com$. 
If $q$ is not a root of unity, 
$\infgal(L/k)$ is an infinite dimensional formal group such that we have 
\[
 0 \rightarrow \widehat{A[[W\sb{1}]]^{\ast}} \rightarrow \infgal(L/k)(A) \rightarrow \hat{\G}\sb{m}(A) \rightarrow 0,
\]
where $\widehat{A[[W\sb{1}]]^{\ast}}$ denotes the multiplicative group 
\[
 \left\{ a \in A[[W\sb{1}]]\, \left| \, \text{\itshape all the coefisients of power series $a-1$ are nilpotent}
\right. \right\} .
\]
%and $\hat{\G}\sb{m}(A)$ is the multiplicative group
%\[
%\{ a\in A \, |\,  \text{\itshape $a-1$ is nilpotent} \},
%\]
modulo Conjecture \ref{6.8j}. 
\item Non-commutative Galois group. If $q$ is not a root of unity, the infinitesimal deformation functor is isomorphic to a quantum  formal group:
\[
\NCF \simeq \widehat{QG}\sb{II \, q}, 
\]
modulo Conjecture \ref{6.8j}. 
\par
We should be careful about the group structure. 
Quantum formal group structure in $\widehat{QG}\sb {q L\n}$ coincides with the group structure defined from the initial conditions as in Remark \ref{10.24a}.  
So we might say that non-commutative Galois group is the quantum formal group $\widehat{QG}\sb{II \, q}$. 
\item 
Let us assume $q$ is not a root of unity.
If we have a $q$-difference field extension 
$(L, \, \sigma )/(k, \, \sigma )$ such that $t \in  
L$ with $\sigma (t) = qt$, then we 
can define the  operator 
$\theta ^{(1)}\colon L \to L$ 
by setting 
$$\theta ^{(1)}(a) := \frac{\sigma (a) -a}{qt -t}.$$
We also assume the field $k$ is $\theta ^{(1)}$ invariant. 
Defining the operator 
 $
\theta ^{(n)}\colon L \to L
 $
 by 
 
\begin{align}
\theta^{(0)} & = \Id \\ 
 \theta ^{(n)} & = \frac{1}{[n]\sb q !}(\theta ^{(1)})^n
 \end{align}
 for every positive integer $n$ 
 so that we have a \qsi field extension 
 $(L, \, \sigma , \, \theta ^*)/(k, \, \sigma , \, \theta ^*)$.
 \par       
Here arises a natural question
 of comparing the Galois groups of 
 the difference field extension 
 $ (L, \, \sigma )/(k, \, \sigma  )$
  and \qsi field extension 
  $(L, \, \sigma , \, \theta^* )/(k, \, \sigma , \, \theta ^* )$. 
 \par
 As the \qsi field extension is constructed from the  
 difference field extension in a more or less trivial way,  
 one might  imagine that they coincide or they are not much different. 
 \par
 This contradicts Conjecture \ref{6.8j}. 
 Let us take our example $\com (t , \, t^\alpha )/\com $. 
 Assume Conjecture \ref{6.8j} is true.  Then the 
 Galois group for the \qsi extension is 
 $\widehat{QG}\sb {II q L\n}$ that is infinite dimensional,
 whereas 
 the the Galois group is of 
 the difference field extenswion is of 
  dimension $2$.
\end{enumerate}
\section{The third example, the field extension 
$\com (t, \, \log\, t )/ \com $}\label{10.4c}
We assume $q$ is a complex number not equal to $0$. 
Let us study the field extension $L/k:=\com(t, \, \log\, t)/\com$ from various view points as in Sections \ref{10.4a} and \ref{10.4b}. 
\subsection{$q$-difference field extension $\com(t,\, \log\, t)/\com$. }\label{6.8k}
We consider $q$-difference operator $\sigma\colon L \to L$ such that $\sigma$ is the $\com$-automorphism of the field $L$ satisfying 
\begin{equation}\label{6.7a}
 \sigma(t) = qt \qquad  \text{ and } \qquad \sigma(\log \, t) = \log \,t + \log\, q . 
\end{equation}
\par
It follows from \eqref{6.7a} that
if $q$ is not a root of unity, then the field of constants of the difference field 
$( \com (t, \, \log\, t), \, \sigma )$ is $\com $ and hence  
 $(\com(t, \, \log\, t),\sigma)/ \com$ is a Picard-Vessiot extension with Galois group $\G\sb{m\,\com} \times \G\sb{a \, \com}$
\subsection{Differential field extension $(\com(t, \, \log\, t), d/dt)/\com$. }
As we have
\[
 \frac{dt}{dt} = 1 \qquad \text{ and }\qquad  \frac{d\log\, t}{dt} = \frac{1}{t}.
\]
So both differential field extensions $\com(t,\, \log\, t)/\com(t)$ and $\com(t)/\com$ are Picard-Vessiot extensions with Galois group $\G\sb{a\, \com}$. The  differential extension $\com(t, \, \log\, t)/\com$ is not, however, a Picard-Vessiot extension. Therefore we need general differential Galois theory \cite{ume96.2} to speak of the Galois group of the differential field extension $\com(t, \, \log\, t)/ \com$. \\
The universal Taylor morphism 
\[
 \iota \colon L \rightarrow L\n[[X]]
\]
sends
\begin{align}
\iota(t) &= t+X,\label{6.7c}\\
\iota(\log\, t) &= \log\, t +\sum\sb{n=0}^{\infty}(-1)^{n+1}\frac{1}{n}\left( \frac{X}{t} \right)^{n} \in L\n[[X]] .\label{6.7d}
\end{align}
Writing $\log\, t$ by $y$, we take $\partial/\partial t,\, \partial/\partial y$ as a basis of $L\n=\com(t,y)\n$-vector space $\mathrm{Der}(L\n/k\n)$ of $k\n$-derivations of $L\n$. It follows from \eqref{6.7c}, \eqref{6.7d} that 
\[
 \eL = \text{ \it a localization of the algebra  }L\s [t+X, \, \sum\sb{n=1}^{\infty}(-1)^{n+1}\frac{1}{n}\left( \frac{X}{t} \right)^{n} ] \subset L\s[[X]].
\]
We argue as in  \ref{10.5a} and Section
 \ref{10.4b}.
 For a commutative algebra $A \in Ob(CAlg/L\n)$ and $\varphi \in \mathcal{F}\sb{L/k}(A)$, there exist 
nilpotent elements 
$a, b \in A $ such that $a,\, b$ such that
\begin{align*}
 \varphi(t+X) &= t+ W\sb{1} + X + a, \\
 \varphi ( \sum\sb{n=1}^{\infty}(-1)^{n+1}\frac{1}{n}\left( \frac{X}{t+W\sb{1}}\right)^{n} ) 
 &= \sum\sb{n=1}^{\infty}(-1)^{n+1}\frac{1}{n}\left( \frac{X}{t+W\sb{1}+ a}\right)^{n} + b
.\end{align*}
Therefore 
we arrived at the dynamical system 
\begin{equation}\label{10.31a}
\left\{ \begin{array}{l} t, \\ y,      \end{array} \right. \mapsto 
 \left\{\begin{array} {l}
\varphi  (t) = t + X +W\sb 1 +a , 
 \\
\varphi (y) = y+  \sum\sb{n=1}^{\infty}(-1)^{n+1}\frac{1}{n}\left( \frac{X}{t+W\sb{1}+ a}\right)^{n} + b .
 \end{array} \right. 
\end{equation}
In terms of initial conditions, dynamical system 
\eqref{10.31a} reads 
$$
\left\{ \begin{array}{l} t, \\ y,      \end{array} \right. \mapsto 
 \left\{\begin{array} {l}t +a ,  \\
y+  b ,
 \end{array}    \right. 
$$
where $a, \, b $ are nilpotent elements of 
of $A$.  
So we conclude 
$$
\infgal (L/k)(A) = \hat{\G}\sb a (A) \times \hat{\G}\sb a (A)
$$
for every commutative $L\n$-algebra $A$.
Consequently we get 
\[
 \infgal(L/k) \simeq  (\hat{\G}\sb{a, \com} \times \hat{\G}\sb{a, \com} ) \otimes\sb {\com} L\n . 
\]
\subsection{\qsi field extension $(\com(t,\, \log\, t),\, \sigma, \, \theta^{\ast})/\com$ }
$\sigma \colon \com(t, \, \log\, t) \rightarrow \com(t, \, \log\, t)$ is the automorphism in Subsection \ref{6.8k}. 
We set $\theta^{(0)} = \Id\sb{\com(t, \, \log\, t)}$ and 
\[
\theta^{(1)} = \frac{\sigma - \Id\sb{\com(t, \, \log\, t)}}{(q-1)t}
\]
so that $\theta^{(1)} \colon \com(t, \, \log\, t) \rightarrow \com(t, \, \log\, t)$ is a $\com$-linear map. We farther introduce 
\[
 \theta^{(i)} := \frac{1}{[i]\sb{q}!}(\theta^{(1)})^{i}\colon \com(t, \, \log\, t) \rightarrow \com(t, \, \log\, t)
\]
that is a $\com$-linear map for $i = 1,2,3\cdots$. 
Hence if we denote the set $\{ \theta^{(i)} \}\sb{i \in \N}$ by $\theta^{\ast}$, then $(\com(t,\, \log\, t),\, \sigma, \,\theta^{\ast})$ is a \qsi ring. \par 
The universal Hopf morphism 
\[
\iota \colon \com(t, \, \log\, t) \rightarrow F(\N, L\n)[[X]]
\]
sends, by Proposition \ref{a4.1},  $t$ and $y$ respectively to
\begin{align*}
 \iota(t) &= tQ + X\\
 \iota(y) &= y + (\log\, q) N + \frac{\log\, q }{q-1} \sum\sb{n=1}^{\infty} X^{n}(-1)^{n+1}\frac{1}{[n]\sb{q}}(tQ)^{-n}
\end{align*}
that we identify with 
\begin{align*}
& = y + W\sb{2} + \, (\log\,  q)\,N + \frac{\log\, q }{q-1} \sum\sb{n=1}^{\infty} X^{n}(-1)^{n+1}\frac{1}{[n]\sb{q}}(t+W\sb{1})^{-n}Q^{-n}, \,\, \in F(\N, A[[W\sb{1},W\sb{2}]])[[X]],
\end{align*}
where we set 
\[
N := \begin{bmatrix}
0&1&2& \cdots \\
0&1&2& \cdots
\end{bmatrix} \in F(\N, \Z)
. \]
We identify further $t+X$ with $t + W\sb{1}+X \in L\n[[W\sb{1},W\sb{2}]][[X]]$ and 
\[
 \sum\sb{n=1}^{\infty}(-1)^{n+1}\frac{1}{[n]\sb q}\left( \frac{X}{t}\right)^{n}
\]
with 
\[
 \sum\sb{n=1}^{\infty}(-1)^{n+1}\frac{1}{[n]\sb q}\left( \frac{X}{t+W\sb{1}}\right)^{n} \in L\n[[W\sb{1},W\sb{2}]][[X]]. 
\]
\subsubsection{Commutative deformations $\mathcal{F}\sb{L/k}$ for $(\com(t,\, \log\, t),\,  \sigma, \, \theta^{\ast})/\com$}
Now the argument of Section \ref{10.4b} allows us to describe infinitesimal deformations on the category of commutative $L\n$-algebras $(CAlg/L\n )$. Let $\varphi \colon \eL \rightarrow F(\N, A[[W\sb{1},W\sb{2}]])[[X]]$ be an infinitesimal deformation of the canonical morphism $\iota \colon \eL \rightarrow F(\N, A[[W\sb{1},W\sb{2}]])[[X]]$ for $A \in Ob\, (CAlg/L\n)$. 
Then there exist $e \in A$ and $b(W\sb{1}) \in A[[W\sb{1}]]$ such that $e-1$ and all the coefficients of the power series $b(W\sb{1})$ are nilpotent and such that
\begin{align*}
 \varphi(t  + W\sb{1} + X) &= e(t + W\sb{1}) + X,
  \\  
 \varphi (\sum\sb{n=1}^{\infty} X^{n}(-1)^{n+1}\frac{1}{[n]\sb{q}}(t+W\sb{1}))^{-n}Q^{-n})   &=\sum\sb{n=1}^{\infty} X^{n}(-1)^{n+1}\frac{1}{[n]\sb{q}}(e(t+W\sb{1}))^{-n}Q^{-n}) + b(W\sb{1}). 
\end{align*}
\begin{proposition}\label{6.8m}
We have an injection
\begin{multline*}
 \mathcal{F}\sb{L/k}(A) \rightarrow \hat{G}\sb{III} :=
  \{ (e,\,  b(W\sb{1})) \in A\times A[[W\sb 1]]\, | \, 
e-1 \text{\it \/  is nilpotent,} \\
\text{\it\/ all the coefficients 
 of $b(W\sb{1})$ are nilpotent} 
\} 
. 
\end{multline*}
\end{proposition}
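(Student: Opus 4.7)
The plan is to follow the pattern established in Sections \ref{10.4a} and \ref{10.4b}. First I would analyze how $\varphi$ acts on the sub-hull generated by $\iota(t) = tQ + X$ together with its $\partial/\partial t$-derivative $Q$. Since $A \in \mathrm{Ob}\,(CAlg/L\n)$ is commutative, applying the argument of the first example (Sublemma \ref{5.15}) to this part of $\eL$ forces $\varphi(X) = X$ and $\varphi(Q) = eQ$ for some $e \in A$ with $e - 1$ nilpotent. This completely determines $\varphi$ on the subalgebra generated by $tQ + X$ together with its derivatives, and fixes, via geometric expansion,
\[
\varphi\bigl(1/(tQ+X)\bigr) = 1/(e(t+W\sb 1)Q + X) = \sum\sb{n\geq 0}(-1)^n X^n (e(t+W\sb 1))^{-n-1}Q^{-n-1}.
\]

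Next I would determine $\varphi$ on $\iota(y)$ by writing $\varphi(\iota(y)) = \iota(y) + \zeta$ with $\zeta$ infinitesimal, and extracting three independent constraints on the expansion $\zeta = \sum\sb{n\geq 0} X^n \zeta\sb n$, $\zeta\sb n \in F(\N, A[[W\sb 1, W\sb 2]])$. First, the relation $\sigma(y) = y + \log q$ lifts to $\hat{\Sigma}(\iota(y)) = \iota(y) + \log q$, and $\mathcal{D}$-equivariance of $\varphi$ yields $\hat{\Sigma}(\zeta) = \zeta$; since $\hat{\Sigma}(X^n \zeta\sb n) = X^n q^n \Sigma(\zeta\sb n)$, this forces $\zeta\sb n = c\sb n Q^{-n}$ with $c\sb n \in A[[W\sb 1, W\sb 2]]$. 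Second, the relation $\theta^{(1)}(y) = (\log q)/((q-1)t)$ lifts to $\hat{\Theta}^{(1)}(\iota(y)) = \bigl((\log q)/(q-1)\bigr) \cdot 1/(tQ+X)$; applying $\varphi$ and comparing coefficient-by-coefficient in $X$ with the image of $1/(tQ+X)$ determined above gives a linear recurrence that pins down $c\sb n$ for $n \geq 1$:
\[
c\sb n = \frac{\log q}{q-1}(-1)^{n+1}\frac{1}{[n]\sb q}\bigl[(e(t+W\sb 1))^{-n} - (t+W\sb 1)^{-n}\bigr].
\]
Third, since $\partial \iota(y)/\partial y = 1$, applying $\partial/\partial W\sb 2$ to $\varphi(\iota(y))$ gives $\partial \zeta/\partial W\sb 2 = 0$, so $c\sb 0 \in A[[W\sb 1]]$; setting $b(W\sb 1) := \bigl((q-1)/(\log q)\bigr)\, c\sb 0$ (up to the normalization in the statement) produces the claimed power series. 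The assumption that $\varphi$ is congruent to $\iota$ modulo nilpotents forces $e - 1$ and all coefficients of $b(W\sb 1)$ to be nilpotent.

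The assignment $\varphi \mapsto (e, b(W\sb 1))$ is well-defined; injectivity follows from the fact that $\iota(t)$ and $\iota(y)$ together with the operators in $\mathcal{D}$ generate $\eL$ over $L\s$ (up to localization), so that $\varphi$ is uniquely reconstructed from these two pieces of data, in the spirit of Lemma \ref{5.24n}. The main obstacle will be the bookkeeping in the second step: one must verify that the single $\hat{\Theta}^{(1)}$-relation suffices to pin down all $c\sb n$ for $n \geq 1$ while imposing no constraint whatsoever on $c\sb 0$, and that the higher $\hat{\Theta}^{(i)}$-relations for $i \geq 2$ are automatically consistent through $\theta^{(i)} = (\theta^{(1)})^i/[i]\sb q!$. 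This is a computation parallel to the derivation of \eqref{11.16a} in Section \ref{10.4b}, streamlined by the additive nature of $\sigma$ on $y = \log t$, which turns the multiplicative recurrence of the previous example into an additive difference of $n$-th powers.
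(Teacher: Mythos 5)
Your proposal is correct and takes essentially the same route as the paper, whose proof consists of invoking ``the argument of Section \ref{10.4b}'': you use $\hat{\Sigma}$-equivariance to force the $Q^{-n}$ shape of the correction to $\iota(y)$, $\hat{\Theta}^{(1)}$-equivariance to determine every coefficient except the constant term, and $\partial/\partial W_2$ to place that free term in $A[[W_1]]$, with injectivity coming from the fact that $\iota(t)$, $\iota(y)$ and the operators in $\mathcal{D}$ generate $\eL$ over $L\s$. The only blemish is that your displayed expansion of $1/(e(t+W_1)Q+X)$ omits the powers of $q$ produced by the twisted commutation $aX = X\hat{\Sigma}(a)$, which is harmless for the conclusion.
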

\begin{conjecture}\label{6.8l}
If $q$ is not a root of unity , them
the injection in Proposition \ref{6.8m} is an equality. 
\end{conjecture}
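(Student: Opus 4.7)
The plan is to prove surjectivity of the inclusion in Proposition \ref{6.8m} by constructing, for each pair $(e,\, b(W\sb 1)) \in \hat{G}\sb{III}(A)$, an explicit infinitesimal deformation $\varphi \in \mathcal{F}\sb{L/k}(A)$ realizing this datum. Here $A$ is a commutative $L\n$-algebra, $e - 1 \in A$ is nilpotent, and every coefficient of $b(W\sb 1) \in A[[W\sb 1]]$ is nilpotent.

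First I would specify $\varphi$ on the natural generators of the Galois hull $\eL$ inside $F(\N, L\s)[[X]]$: set $\varphi(Q) = eQ$, $\varphi(X) = X$, $\varphi(N) = N$, which forces $\varphi(\iota(t)) = e(t + W\sb 1)Q + X$, and set $\varphi(\iota(\log t))$ equal to the image displayed just before Proposition \ref{6.8m}, with the nilpotent perturbation $b(W\sb 1)$ added to its additive part. By construction, this assignment is congruent to $\iota$ modulo nilpotents and restricts to $\iota$ on the sub-algebra $\K$ generated by $\iota(k)$ and $L\s$, since the only modifications occur in the $e$ and $b(W\sb 1)$ parameters, both of which are nilpotent-close to the trivial deformation.

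Second I would verify compatibility of $\varphi$ with each operator in $\D = \{\hat{\Sigma},\, \hat{\Theta}^{(i)},\, \partial/\partial W\sb 1,\, \partial/\partial W\sb 2\}$. The shape of the candidate is precisely forced by the equivariance analysis already carried out in Section \ref{10.4c}: $\hat{\Sigma}$-equivariance produces the $Q$-factors, the $\hat{\Theta}^{(i)}$-equivariance combined with the $q$-difference equation $\sigma(\log t) = \log t + \log q$ determines the logarithmic image up to the additive nilpotent term $b(W\sb 1)$, and the derivations $\partial/\partial W\sb i$ act correctly since $e \in A$ and the coefficients of $b(W\sb 1)$ lie in $A$, hence are annihilated by $\partial/\partial W\sb 2$ in particular.

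The chief obstacle, exactly parallel to Remark \ref{5.24l} for the second example, is verifying that this prescription on a spanning set extends consistently to the whole Galois hull $\eL$. By the analogue of Lemma \ref{5.24n} in the present setting, $\eL$ is a localization of the $L\s$-subalgebra generated by $\iota(t)$, $\iota(\log t)$ and their iterated derivatives
\[
\frac{\partial^l \iota(t)}{\partial t^l}, \qquad \frac{\partial^l \iota(\log t)}{\partial t^l}, \qquad l \in \N,
\]
and the candidate $\varphi$ extends consistently if and only if no non-trivial algebraic relation over $L\s$ holds among these generators---equivalently, iff the parameters $e$ and $b(W\sb 1)$ can be prescribed freely and independently. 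When $q$ is not a root of unity, this algebraic independence should follow from a transcendence-type property of the $q$-logarithm series, in exactly the same spirit as the guess articulated in Remark \ref{5.24l} for $Y\sb 0$. Proving it seems to require input of comparable depth to Conjecture \ref{6.8j}, and it is precisely at this step that the hypothesis on $q$ enters.
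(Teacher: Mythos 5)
The statement you are asked to prove is labelled a \emph{Conjecture} in the paper, and the paper contains no proof of it: the authors only record a heuristic justification (the analogue of Remark \ref{5.24l}, namely the ``experience'' that no non-trivial algebraic relations over $L\s$ seem to exist among the relevant partial derivatives when $q$ is not a root of unity) and some logical relations among their conjectures (Conjecture \ref{6.8p} implies \ref{6.8j}, which implies \ref{5.24j}). So there is no proof in the paper to compare yours against, and the question is only whether your argument actually closes the gap the authors left open.

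It does not. Your first two steps --- writing down the candidate $\varphi$ on the generators $\iota(t)$, $\iota(\log t)$ by inserting the parameters $e$ and $b(W_1)$, and checking equivariance under $\hat{\Sigma}$, $\hat{\Theta}^{(i)}$, $\partial/\partial W_1$, $\partial/\partial W_2$ --- are the easy direction; they only re-derive the shape that Proposition \ref{6.8m} already established any deformation must have. The decisive step is the one you defer: showing that this prescription on a generating set extends to a well-defined $\mathcal{D}$-morphism on all of $\eL$, which requires knowing that the generators (the iterated $t$-derivatives of $\iota(t)$ and $\iota(\log t)$, in the sense of the analogue of Lemma \ref{5.24n}) satisfy no algebraic relations over $L\s$ other than the forced ones, so that $e$ and $b(W_1)$ can be chosen freely. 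You explicitly concede that this ``should follow from a transcendence-type property'' and ``seems to require input of comparable depth to Conjecture \ref{6.8j}.'' That freeness assertion \emph{is} the conjecture; deferring it means the argument proves nothing beyond what Proposition \ref{6.8m} already says. A genuine proof would have to establish the algebraic independence (e.g.\ by an explicit transcendence argument for the $q$-logarithm series $\sum_n (-1)^{n+1}[n]_q^{-1}(X/(t+W_1))^nQ^{-n}$ and its derivatives over $L\s[Q,X]$), and neither you nor the paper supplies that.
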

$\hat{G}\sb{III}$ is a group functor on $(CAlg/L\n)$. In fact, for $A\in Ob(Alg/L\n)$, 
 we define the product of two elementss
$$(e, \, b(W\sb{1})),\,  (g, \, c(W\sb{1})) \in \hat{G}\sb{III}(A)$$ 
 by 
\[
 (e,\,  b(W\sb{1})) \star (g, \, c(W\sb{1})) := (eg, \, b(gW\sb{1} + (g-1)t)+ c(W\sb{1})). 
\]
Then the
product is a well-determined element of $\hat{G}\sb {III}(A)$, the product is associative,   
the 
 unit element is $(1,\, 0) \in \hat{G}\sb{III}
(A)$ and the inverse $(e, \, b(W\sb{1}))^{-1} = (e^{-1}, \, -b(e^{-1}W\sb{1}+ (e^{-1} -1)t))$. 
\par
So if Conjecture \ref{6.8l} is true, we have a non-splitting  exact sequence 
\[
 0 \rightarrow A[[W\sb{1}]]\sb{+} \rightarrow \infgal(L/k)(A) \rightarrow \hat{\G}\sb{m\, L\n}(A) \rightarrow 1,
\]
where $A[[W\sb{1}]]\sb{+}$ denote the additive group of the power series in $A[[W\sb{1}]]$ whose coefficients are nilpotent element. 
%%%%%%%%%%%%%%%%%%%%%%%%%%%%%%%%%%%%%%%%%%%%%%%%%%%%%%%%%%%%%%%%%%%%%%%%%%%%%%%%%%%%%%%%%%%%%
\subsubsection{Non-commutative deformations $\NCF$ for $(\com(t,\, \log\, t),\, \sigma,\,  \theta^{\ast})/\com$}
The arguments in Section \ref{10.4b} allows us to prove analogous results for \qsi field extension $(\com(t,\, \log\, t),\,  \sigma,\, \theta^{\ast})/\com$. We write assertions without giving detailsed proofs. For, the proofs are same. 
\begin{definition}
We introduce a functor 
\[
\widehat{QG}\sb{III \, q} \colon (\NCA) \rightarrow (Set)
\]
by setting 
\begin{multline*} 
\widehat{QG}\sb{III \, q}(A):=\{(H,\, \varphi(W\sb{1}))\in \gH\sb{q\, L\n}(A)\times A[[W\sb{1}]] \, | \, 
\text{\it\/ (1) $H = \begin{bmatrix}
e&f\\
0&1
\end{bmatrix} \in \widehat{\gH}\sb q (A)$ so that } \\
ef = qfe$, \, $e-1, f \in A \text{ are nilpotent. }
\text{\it\/(2) All the coefficients of $\varphi(W\sb{1})$}\\
\text{\it\/ are nilpotent. }
\text{\it\/(3) $[e(t + W\sb{1})+f, \, \varphi(W\sb{1})] = 0$.}
\}
\end{multline*}
\end{definition}
$\widehat{QG}\sb{III \, q}$ is a quantum formal group. Namely, for 
$$
(G,\,  \varphi(W\sb{1})),\, (H, \, \psi(W\sb{1})) \in \widehat{QG}\sb{III \, q}(A)
$$
 such that the two subsets 
\begin{align*}
&\text{\it 
\{ all the entries of matrix $H$, all the coefficients of the power series $\varphi(W\sb{1})$\},
} \\
&\text{\it
 \{all the entries of matrix $H$, all the coefficients of the power series $\psi(W\sb{1})$\}
 }
 \end{align*}
  of $A$ are mutually commutative, we define their product by 
\[
(G, \, \varphi(W\sb{1})) \star (H, \, \psi(W\sb{1})) := (GH, \, \varphi(gW\sb{1} + (g-1)t + h) + \psi(W\sb{2})), 
%\widehat{QG}\sb{III \, q}(A),
 \]
 where 
 $$
 H=\begin{bmatrix} 
 g & h \\ 
 0 & 1 
      \end{bmatrix}.
 $$
Then the product of two elements is a well-determined element in the set $\widehat{QG}\sb{III}(A)$ and the product is associative. The unit element is $(\Id \sb{2}, \,  0) \in \widehat{QG}\sb{III\, q}(A)$. The inverse 
$$
(G, \, \varphi (W\sb 1))^{-1}
= (G^{-1}, \, -
\varphi 
(
e^{-1}W\sb 1
+ (e^{-1} -1)t -e^{-1}f
)
 \in \widehat{QG}\sb{III\, q^{-1}}(A), 
$$
where
$$
G=\begin{bmatrix}
e & f \\
0  &1
\end{bmatrix}. 
$$
\begin{proposition}\label{6.8o}
We have a functorial injection
\[
\NCF(A) \rightarrow \widehat{QG}\sb{III \, q}(A)
\]
that send $\varphi \in \NCF(A)$ to $( \begin{bmatrix}
e & f \\
0& 1
\end{bmatrix}, \, b(W\sb{1})
)$. Here
\begin{align}
\varphi(t+ W\sb{1}+X)= e(t+W\sb{1}) + f + X,\\
\varphi (\sum\sb{n=1}^{\infty}X^{n} (-1)^{n+1} \frac{1}{[n]\sb{q}}(e(t+W\sb{1}) + f)^{-n}Q^{-n} + b(W\sb{1}) ).
\end{align}
\end{proposition}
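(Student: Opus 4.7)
The plan is to parallel the argument for Proposition~\ref{6.8i} in Section~\ref{10.4b}, replacing the power-law generator $Y_0$ by the logarithmic generator $\iota(\log t)$. Fix a deformation $\varphi \in \NCF(A)$ and work inside $F(\N,\,A[[W_1,W_2]])[[X]]$. First, since $(\com(t),\sigma,\theta^*)$ is a \qsi subfield of $L=\com(t,\log t)$ and the Galois hull of $\com(t)/\com$ sits inside $\eL$, the restriction of $\varphi$ to the subalgebra generated by $\iota(t)=tQ+X$ is controlled by Section~\ref{10.4a}. By the non-commutative refinement of Lemma~\ref{8.30c}, there exist $e,f\in A$ with $e-1,f$ nilpotent and satisfying the required $q$-commutation relation, such that $\varphi(Q)=eQ$, $\varphi(X)=X+f$, and hence $\varphi(\iota(t))=(e(t+W_1)+f)Q+X$.

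Second, I would determine $\varphi(Y_1)$ for $Y_1:=\iota(\log t)\in\eL$ by expanding it as a power series in $X$. The \qsi relations $\sigma(\log t)=\log t+\log q$ and $\theta^{(1)}(\log t)=\log q/((q-1)t)$, transported under $\iota$, give
\[
\hat{\Sigma}(Y_1)-Y_1=\log q,\qquad \hat{\Theta}^{(1)}(Y_1)=\frac{\log q}{(q-1)\iota(t)},
\]
and the same equations must hold for $\varphi(Y_1)$ with $\iota(t)$ replaced by $\varphi(\iota(t))$. Comparing coefficients of $X^n$, the $\hat{\Sigma}$-equation forces each non-constant coefficient to carry the factor $Q^{-n}$, while the $\hat{\Theta}^{(1)}$-equation yields a recurrence that (using $[n]_q\ne 0$ since $q$ is not a root of unity) determines the $n\ge 1$ coefficients in terms of $(e(t+W_1)+f)^{-1}$; the $n=0$ term is fixed to be $\log t+W_2+(\log q)N$ plus an element of $A[[W_1,W_2]]$, and compatibility with $\partial/\partial W_2$ forces this element to be a power series $b(W_1)\in A[[W_1]]$. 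One concludes
\[
\varphi(Y_1)=\log t+W_2+(\log q)N+\frac{\log q}{q-1}\sum_{n\ge 1}X^n(-1)^{n+1}\frac{1}{[n]_q}(e(t+W_1)+f)^{-n}Q^{-n}+b(W_1),
\]
with all coefficients of $b(W_1)$ nilpotent.

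Third, I would extract the commutativity condition. Since $t\cdot\log t=\log t\cdot t$ in $L$, we have $\iota(t)Y_1=Y_1\iota(t)$ in $\eL$, so $\varphi(\iota(t))\varphi(Y_1)=\varphi(Y_1)\varphi(\iota(t))$. Substituting the formulas from the previous two steps and comparing coefficients of $X^l$, the contributions of the infinite sum collapse by a telescoping $q$-identity of the same shape as Lemma~\ref{6.2g}, leaving the single condition $[e(t+W_1)+f,\,b(W_1)]=0$, parallel to Lemma~\ref{9.13b}. Thus $\varphi$ defines an element of $\widehat{QG}_{III\,q}(A)$, and the assignment is injective since $\eL$ is generated over $L^\sharp$ (on which $\varphi$ coincides with $\iota$) by $\iota(t)$ and $Y_1$ together with their partial derivatives in $\partial/\partial t$ and $\partial/\partial y$. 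The main obstacle is the explicit solution of the recurrence in the second step and the verification that the higher $\hat{\Theta}^{(j)}$-compatibilities for $j\ge 2$ impose no further constraints beyond those from $\hat{\Theta}^{(1)}$ (which should follow from axiom~(4) in Definition~\ref{a3.3}), together with carefully tracking the $[n]_q$-combinatorics in the third step so that the commutator reduction truly collapses to the single relation on $b(W_1)$.
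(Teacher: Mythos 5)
Your proposal is correct and follows exactly the route the paper intends: the paper gives no separate proof of Proposition \ref{6.8o}, stating only that ``the proofs are the same'' as in Section \ref{10.4b}, and your argument is precisely that transfer --- pin down $e,f$ via the sub-extension $\com(t)/\com$ as in Lemma \ref{8.30c}, determine $\varphi(\iota(\log t))$ from the $\hat{\Sigma}$- and $\hat{\Theta}^{(1)}$-relations up to a power series $b(W_1)$, and extract the constraint $[e(t+W_1)+f,\,b(W_1)]=0$ from $[t,\log t]=0$ via the analogue of Lemma \ref{9.13b}. The residual details you flag (higher $\hat{\Theta}^{(j)}$-compatibility via axiom (4), and the telescoping $q$-identity) are exactly the ones the paper also leaves implicit.
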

We also have a Conjecture. 
\begin{conjecture}\label{6.8p}
If $q$ is not a root of unity, then 
the injection in Proposition \ref{6.8o} is an equality. So
\[
\NCF \simeq \widehat{QG}\sb{III}.
\]
Therefore quantum Galois group of the \qsi extension is the wuantum formal group $\widehat{QG}\sb{III\, q}$. 
\end{conjecture}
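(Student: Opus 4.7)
The plan is to prove surjectivity of the injection $\NCF(A) \hookrightarrow \widehat{QG}\sb{III\,q}(A)$ from Proposition \ref{6.8o} by explicitly constructing, for each element $(H, b(W\sb 1))$ of $\widehat{QG}\sb{III\,q}(A)$ with $H = \begin{bmatrix} e & f \\ 0 & 1 \end{bmatrix}$, an infinitesimal deformation $\varphi \in \NCF(A)$ sent to $(H, b(W\sb 1))$ by the map of Proposition \ref{6.8o}.

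First I would identify a convenient generating set of $\eL$ over $L\s$. A computation parallel to Lemma \ref{5.24n}, using $\iota(t) = tQ + X$ and the explicit expansion of $\iota(\log t)$, shows that $\eL$ is a localization of the $L\s$-subalgebra of $F(\N, L\s)[[X]]$ generated by $Q$, $X$, $N$, $(tQ+X)^{-1}$, together with the iterated partial derivatives of $\iota(\log t)$ with respect to $t$. Note that $\partial/\partial y$ sends $\iota(\log t)$ to $1$ and kills $Q$, $X$, $N$, so only $t$-derivatives are relevant; and $\Sigma$, $\Theta^{(i)}$ act on these generators by explicit rational expressions, so this subalgebra is stable under $\mathcal{D}$.

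Next, given $(H, b(W\sb 1))$, I would define $\varphi$ on these generators by the recipe of Proposition \ref{6.8o}: $\varphi(Q) = eQ$, $\varphi(X) = X + fQ$, $\varphi(N) = N$, and $\varphi$ replaces each factor $(t+W\sb 1)^{-n}$ in the logarithmic series by $(e(t+W\sb 1)+f)^{-n}$ and adds $b(W\sb 1)$. I would then verify that the defining relations survive: the $q$-commutation $QX = qXQ$ transported through $\varphi$ reduces, as in the first example, to $ef = q^{-1}fe$, which is built into $\widehat{QG}\sb{III\,q}$; and the commutation $[\iota(t), \iota(\log t)] = 0$, after applying $\varphi$ and using the argument of Lemma \ref{9.13b} adapted to the logarithmic series, reduces to $[e(t+W\sb 1)+f, b(W\sb 1)] = 0$, again part of the data. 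Compatibility of $\varphi$ with $\Sigma$, $\Theta^{(i)}$, $\partial/\partial W\sb 1$, and $\partial/\partial W\sb 2$ follows by the same term-by-term manipulations that produced the recurrence \eqref{11.16a} in Section \ref{10.4b}, with $\binom{\alpha}{n}\sb q$ replaced by the logarithmic coefficients $(-1)^{n+1}/[n]\sb q$.

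The main obstacle is showing that this prescription actually extends to a well-defined $\mathcal{D}$-algebra morphism on all of $\eL$, and this is exactly the analogue of Conjecture \ref{6.8j}. One must prove that the sequence of partial derivatives $\partial^n \iota(\log t)/\partial t^n$, together with their images under $\Sigma$ and the $\Theta^{(i)}$, satisfies no algebraic relation over $L\s[Q, X, N, (tQ+X)^{-1}]$ beyond those already accounted for by the $q$-commutation and $[\iota(t),\iota(\log t)]=0$. When $q$ is not a root of unity, this algebraic independence is expected from the genericity of the $q$-series coefficients $1/[n]\sb q$, but a rigorous proof would require a delicate transcendence argument in the twisted power series ring $F(\N, L\s)[[X]]$. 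All the real difficulty is concentrated here; once this independence is granted, the explicit construction above produces the required deformation and closes Conjecture \ref{6.8p}.
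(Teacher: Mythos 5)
The statement you are addressing is stated in the paper as a \emph{conjecture} (Conjecture \ref{6.8p}); the authors do not prove it, and the only thing they establish in its vicinity is the functorial injection of Proposition \ref{6.8o} together with the remark that the argument of \ref{11.23a} shows Conjecture \ref{6.8p} implies Conjecture \ref{6.8j}. Your proposal does not close the conjecture either, and you effectively concede this in your last paragraph: the entire content of the surjectivity claim is the assertion that the iterated partial derivatives $\partial^{n}\iota(\log t)/\partial t^{n}$ satisfy no algebraic relation over $L\s[Q,\,X,\,N,\,(tQ+X)^{-1}]$ beyond those forced by $QX=qXQ$ and $[\iota(t),\iota(\log t)]=0$. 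Without that independence, the prescription you give on generators (send $Q\mapsto eQ$, $X\mapsto X+fQ$, replace $(t+W_{1})^{-n}$ by $(e(t+W_{1})+f)^{-n}$ and add $b(W_{1})$) need not extend to a well-defined $\mathcal{D}$-morphism on all of $\eL$: any unaccounted relation in $\eL$ would impose a further constraint on the pair $(H,\,b(W_{1}))$, and the injection of Proposition \ref{6.8o} would then be strict. You assert this independence is ``expected from the genericity of the $q$-series coefficients'' and that a rigorous proof ``would require a delicate transcendence argument,'' which is precisely the point at which the paper itself stops: the authors motivate the parallel Conjecture \ref{5.24j} only by the \emph{experience} that no nontrivial relation has been found, and offer no proof. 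So what you have written is a plausible reduction of Conjecture \ref{6.8p} to the same open algebraic-independence statement, not a proof of it.

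The parts of your proposal that are verifiable are consistent with the paper's framework: the determination of $\varphi$ on $\iota(t)=tQ+X$ via the first example, the reduction of $[\varphi(t),\varphi(\log t)]=0$ to $[e(t+W_{1})+f,\,b(W_{1})]=0$ by the analogue of Lemma \ref{9.13b}, and the identification of the generators of $\eL$ in the spirit of Lemma \ref{5.24n} all match the computations of Sections \ref{10.4b} and \ref{10.4c}. But these only reprove the injectivity direction already contained in Proposition \ref{6.8o}. The missing idea --- an actual transcendence or non-relation argument in the twisted power series ring $F(\N,L\s)[[X]]$ when $q$ is not a root of unity --- is the whole conjecture, and neither you nor the paper supplies it.
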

\begin{remark}
The argument in 
\ref{11.23a}
allows us to prove that Conjecture \ref{6.8p} implies Conjecture \ref{6.8j}.
\end{remark}
%%%%%%%%%%%%%%%%%%%%%%%%%%%%%%
 \bibliographystyle{plain.bst}
\bibliography{umemura2b}
 \end{document}